\documentclass[12pt]{amsart}
\usepackage{verbatim}
\usepackage{amssymb}
\usepackage{enumerate}
\usepackage{graphicx}
\usepackage{psfrag}
\usepackage{stmaryrd}

\usepackage[utf8x]{inputenc}

\newtheorem{theorem}{Theorem}[section]
\newtheorem{proposition}[theorem]{Proposition}
\newtheorem{lemma}[theorem]{Lemma}
\newtheorem{corollary}[theorem]{Corollary}
\newtheorem{fact}[theorem]{Fact}

\newtheorem{claim}[theorem]{Claim}

 \newtheorem{dcc}{Double Cover Conjecture}  
\newtheorem{qlemma}{Lemma}

\theoremstyle{definition}

\newtheorem{definition}[theorem]{Definition}

\theoremstyle{remark}
\newtheorem{remark}{Remark}

\newcommand{\mc}[1]{\mathcal{#1}}
\newcommand{\mbb}[1]{\mathbb{#1}}

\newcommand{\mf}[1]{\mathfrak{#1}}

\newcommand{\setm}{\setminus}
\newcommand{\empt}{\emptyset}
\newcommand{\subs}{\subset}

\newcommand{\oo}{{{\omega}_1}}

\newcommand{\dom}{\operatorname{dom}}
\newcommand{\ran}{\operatorname{ran}}
\def\<{\left\langle}
\def\>{\right\rangle}
\def\cf{\operatorname{cf}}
\def\br#1;#2;{\bigl[ {#1} \bigr]^ {#2} }

\newcommand{\rrestriction}[1]{[#1]}
\newcommand{\ssetm}{\bbslash}

\author[L. Soukup]{Lajos Soukup}
\thanks
  {
   The author was supported by the
Hungarian National Foundation for Scientific Research grants no.
K 68262 and K 61600. }
\address
      { Alfr{\'e}d R{\'e}nyi Institute of Mathematics, Hungarian
        Academy of Sciences, Budapest, Hungary  }
\email{soukup@renyi.hu}
\urladdr{http://www.renyi.hu/$\sim$soukup}
\subjclass[2000]{03E35, 54E25}
\keywords{graphs, infinite,  elementary submodels, decomposition,
cycle-decomposition, bond-faithful decomposition,  
Double Cover Conjecture, Fodor's Lemma, Pressing Down Lemma,
$\Delta$-system, partition theorem, free subset}
\title[Elementary submodels]{
Elementary submodels in infinite combinatorics}
\date{\today}

\begin{document}

\begin{abstract}
The usage of elementary submodels
is a simple but powerful method to prove theorems, or to simplify proofs
in infinite combinatorics.
First
we  introduce all the necessary concepts of logic, then we 
prove classical theorems
using elementary submodels.
We also present a
 new proof of  Nash-Williams's theorem 
on cycle-decomposition of graphs, 
and finally we improve 
a decomposition theorem of Laviolette 
concerning  bond-faithful decompositions of
graphs.
\end{abstract}

\maketitle

\section{Introduction}

The aim of 
this paper is to explain how to use
elementary submodels  to prove new theorems or to simplify old proofs
in infinite combinatorics.
The paper mainly addresses novices
learning this  technique:
%
we  introduce all the necessary concepts and
give easy  examples to illustrate our method,
but the paper also
contains  new proofs of  theorems of Nash-Williams on
decomposition of infinite graphs, and an improvement of  a 
decomposition theorem
of Laviolette 
concerning bond-faithful decompositions.

The first 
known application of this method is due to Stephen G. Simpson, (see \cite{S} and the proof of 
\cite[Theorem 7.2.1]{CK}), who proved the Erdős-Rado Theorem using this technique, and 
indicated that  ``{\em one can give similar proofs for several other known theorems of combinatorial set theory ...''} 

Our aim is to popularize a {\em method} instead of giving 
just ``{\em black box}'' theorems.

In section \ref{sc:prelim} we recall and summarize all
necessary preliminaries from set theory, combinatorics and logic.

In section \ref{sc:first} 
we give the first application of elementary submodels, and we 
explain why it is natural to consider $\Sigma$-elementary submodels 
for  some large enough finite family $\Sigma$
of formulas.

In section \ref{sc:classical} we use elementary submodels to prove
some classical theorems in combinatorial set theory. All these
theorems
have the following Ramsey-like flavor:
{\em Every large enough structure contains large enough ``{nice}'' substructures.  }

In section \ref{sc:nw} we prove structure  theorems of a different kind: 
{\em Every large structure having certain properties can be
  partitioned into small ``nice'' pieces.}
A typical example is  Nash-Williams's theorem
on cycle decomposition of graphs without odd cuts.
To prove these structure theorems it is not enough to consider just
one
elementary submodel but we should introduce the concept of 
the {\em chains of elementary submodels}.

Finally, in section \ref{sc:bond},
we give a more elaborate application of chain of elementary submodels
to
eliminate GCH from a theorem concerning bond-faithful
decomposition of graphs.

\bigskip

This paper addresses persons
who are interested in infinite combinatorics, but who are not set theory specialist.
If you want to study 
more elaborated applications of these methods,
see the survey papers of Dow \cite{Dow} and Geshcke \cite{Geschke},
or  the book of Just and Weese \cite[Chapter 24]{Just-Weese}. 
These papers are highly more technical, than the current one, but they
 also contain many applications in set theoretic topology.

For applications of these methods in infinite combinatorics, 
see also \cite{BHT}, \cite{Hajnal}, \cite{HJSSz} and   \cite{KoSh}.
Chains of elementary submodels play also a crucial role in the proof of
some key results of the celebrated pcf theory of  Shelah,
see \cite{sh:pcf} or \cite{AbMa}. 


{
\sc 

%
%
%

%


%
%

}

\section{Preliminaries}\label{sc:prelim}

\subsection{Set theory}
We use the standard notions and notation  of set theory, see
\cite{Jech} or \cite{Kunen}. If
${\kappa}$ is a cardinal and $A$ is a set, let
\begin{equation}
\br A;<{\kappa};=\{a\subs A:|a|<{\kappa} \};\qquad
\br A;{\kappa};=\{a\subs A:|a|={\kappa} \}.
\end{equation}
If $X$ and $Y$ are sets let
$[X,Y]=\bigl\{\{x,y\}:x\in X, y\in Y\bigr\}$.

We denote by $V$ the class of all sets, and by
$On$  the class of all ordinals.
The {\em cumulative hierarchy} $\<V_{\alpha}:{\alpha}\in On\>$ is
defined by  transfinite induction on ${\alpha}$ as follows:
\begin{enumerate}[(1)]
\item $V_0=\empt$,
\item $V_{{\alpha}+1}=\mc P(V_{\alpha})$,
\item $V_{\beta}=\cup\{V_{\alpha}:{\alpha}<{\beta}\}$
if ${\beta}$ is a limit ordinal.
\end{enumerate}
\begin{fact}
$V=\cup \{V_{\alpha}:{\alpha}\in On\}$, i.e. for each set $x$
there is an ordinal ${\alpha}$ such that $x\in V_{\alpha}$.
\end{fact}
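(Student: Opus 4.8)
The plan is to prove this by $\in$-induction, whose availability is exactly the content of the Axiom of Foundation; indeed, without Foundation the statement fails, so recognizing that Foundation is the one essential ingredient is the conceptual heart of the argument. Write $WF=\cup\{V_{\alpha}:{\alpha}\in On\}$ for the class we wish to identify with $V$, and aim to show $WF=V$.

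First I would record two routine preliminary facts, each proved by a straightforward transfinite induction on ${\alpha}$. (i) Every $V_{\alpha}$ is transitive: in the successor step, if $x\subs V_{\alpha}$ and $y\in x$, then $y\in V_{\alpha}$, whence $y\subs V_{\alpha}$ by the inductive hypothesis, so $y\in \mc P(V_{\alpha})=V_{{\alpha}+1}$; the limit step is immediate, since a union of transitive sets is transitive. (ii) The hierarchy is increasing, ${\alpha}\le{\beta}\Rightarrow V_{\alpha}\subs V_{\beta}$: transitivity gives $V_{\alpha}\subs \mc P(V_{\alpha})=V_{{\alpha}+1}$, and the limit clause of the definition handles limit stages.

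The key step is to show that $WF$ is closed under the set-forming operation, i.e.\ that $x\subs WF$ implies $x\in WF$. Given such an $x$, for each $y\in x$ pick the least ordinal ${\alpha}_y$ with $y\in V_{{\alpha}_y}$. By Replacement, $\{{\alpha}_y:y\in x\}$ is a set of ordinals, so ${\alpha}=\sup\{{\alpha}_y:y\in x\}$ exists; by the monotonicity fact (ii) every $y\in x$ lies in $V_{\alpha}$, hence $x\subs V_{\alpha}$ and therefore $x\in\mc P(V_{\alpha})=V_{{\alpha}+1}\subs WF$. (The edge case $x=\empt$ is covered as well, giving $x\in V_1$.)

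Finally I would conclude by $\in$-induction. Taking the property $\varphi(x)$ to be ``$x\in WF$'', the closure step says precisely that $(\forall y\in x\;\varphi(y))$ implies $\varphi(x)$, so $\in$-induction yields $\varphi(x)$ for every set $x$. Equivalently, if some set failed to lie in $WF$, one applies Foundation to the set of members of its transitive closure that fail to lie in $WF$, extracts an $\in$-minimal such element $z$, and derives a contradiction: all elements of $z$ then lie in $WF$, so $z\subs WF$, whence $z\in WF$ by the closure step. The one point deserving care, and the only genuine obstacle, is this appeal to Foundation: it is what guarantees that no set can escape every $V_{\alpha}$, and it is the reason the statement holds throughout the full universe $V$.
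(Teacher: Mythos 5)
Your proof is correct, and in fact the paper offers no proof of this Fact at all: it is stated in the preliminaries as standard background from set theory (the references \cite{Jech} and \cite{Kunen} contain exactly your argument), so there is nothing in the paper to compare against. Your route is the canonical one — transitivity and monotonicity of the $V_{\alpha}$'s by transfinite induction, closure of $WF=\cup\{V_{\alpha}:{\alpha}\in On\}$ under the subset-forming step via Replacement (collect the ranks ${\alpha}_y$ and take their supremum), and then Foundation, in the form of $\in$-induction on the transitive closure, to conclude $WF=V$ — and you correctly identify Foundation as the essential ingredient, since over the remaining axioms the statement is equivalent to it.
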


\subsection{Combinatorics}
We use the standard notions and notation  of combinatorics, see
e.g. \cite{Di}.
A {\em 
graph $G$}
is a pair $\<V(G),E(G)\>$, where $E(G)\subs \br V(G);2;$.
$V(G)$ and $E(G)$ are  the sets of {\em vertices and edges},
respectively,  of $G$. 
We always assume that $V(G)\cap E(G)=\empt$.

A {\em ${\kappa}$-cover}  of a graph $G$ is a family $\mc G$
of subgraphs of $G$ such that every edge of $G$ belongs to exactly
${\kappa}$ members of the family $\mc G$.
A {\em decomposition} is a 1-cover, i.e. a family $\mc G$ such that 
$\{E(G'):G'\in \mc G\}$ is  a partition of $E(G)$.

If $M$ is a set 
then let
\begin{equation}\notag
G\rrestriction M =\<V(G)\cap M, E(G)\cap \br M;2;\>;
\makebox[2mm]{} G\ssetm M=\<V(G), E(G)\setm M\>.
\end{equation}
So $G\ssetm M$ denotes the graph obtained from $G$ 
removing all edges in $M$.
If 
$\forall x,y  (x,y\in M \leftrightarrow \{x,y\}\in M) $, then 
the graphs $G[M]$ and $G\ssetm M$ form  a decomposition of  $G$.

If $G$ is fixed, and $A\subs V(G)$ then we write
$\bar A$ for $V(G)\setm A$.
A {\em cut} of $G$ is a set of edges of the form
$E(G)\cap [A,\bar A]$ for some $A\subs V(G)$.
A {\em bond} is a non-empty cut which is 
 minimal among the cuts
with respect to inclusion.

\begin{fact}\label{f:bond} 
$\empt\ne F\subs E(G)$  is a bond in $G$ iff
there are two distinct connected components $C_1$ and $ C_2$ of 
$G\ssetm F$ such that $F=E(G)\cap [C_1,C_2]$.
\end{fact}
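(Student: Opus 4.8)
The plan is to prove both implications through a single auxiliary observation relating a cut to the component structure of the graph. Suppose $F=E(G)\cap[A,\bar A]$ is any cut and $C$ is a connected component of $G\ssetm F$. Since the internal edges of $C$ lie in $E(G)\setm F$ while every $G$-edge joining $A$ to $\bar A$ has been deleted, $C$ is contained wholly in $A$ or wholly in $\bar A$; moreover every $G$-edge with exactly one endpoint in $C$ is absent from $G\ssetm F$ and hence lies in $F$, so $E(G)\cap[C,\bar C]\subs F$, and this latter set is itself a cut (witnessed by $B=C$). I would record this observation first, since both directions rest on it.

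For the implication ($\Leftarrow$), assume $C_1,C_2$ are distinct components of $G\ssetm F$ with $F=E(G)\cap[C_1,C_2]$. First I would check $F=E(G)\cap[C_1,\bar{C_1}]$: the inclusion $\subs$ is immediate as $C_2\subs\bar{C_1}$, and $\supseteq$ follows from the observation applied to $C_1$; thus $F$ is a nonempty cut. For minimality, let $F'=E(G)\cap[B,\bar B]$ be any nonempty cut with $F'\subs F$. Choosing an edge of $F'$ and noting it also lies in $F=E(G)\cap[C_1,C_2]$ pins its two endpoints into $C_1$ and $C_2$, forcing (say) $C_1\subs B$ and $C_2\subs\bar B$; here I use that $C_1$ and $C_2$ remain connected in $G\ssetm F'$ (their internal edges avoid $F\supseteq F'$) and that a connected subgraph of $G\ssetm F'$ cannot straddle the partition $\{B,\bar B\}$, every crossing edge being in $F'$. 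Then each edge of $F$ runs from $C_1\subs B$ to $C_2\subs\bar B$, so it lies in $F'$, giving $F\subs F'$ and hence $F'=F$.

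For the implication ($\Rightarrow$), assume $F$ is a bond, and fix an edge $e_0=\{u,v\}\in F$; let $C_1,C_2$ be the components of $u$ and $v$ in $G\ssetm F$. By the observation, $E(G)\cap[C_1,\bar{C_1}]$ and $E(G)\cap[C_2,\bar{C_2}]$ are cuts contained in $F$, and each is nonempty since it contains $e_0$, so minimality of $F$ forces both to equal $F$. From $F=E(G)\cap[C_1,\bar{C_1}]$ and $e_0\in F$ one reads off $v\in\bar{C_1}$, whence $C_1\ne C_2$; and combining the two descriptions shows every edge of $F$ has one endpoint in $C_1$ and the other in $C_2$, i.e. $F\subs E(G)\cap[C_1,C_2]$, while the reverse inclusion is clear as $C_2\subs\bar{C_1}$. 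Hence $F=E(G)\cap[C_1,C_2]$ with $C_1,C_2$ distinct.

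The main obstacle is the minimality bookkeeping rather than any deep idea: in both directions one must verify that the auxiliary cuts produced are nonempty, so that minimality yields equality and not $\empt$, and that a connected piece of $G\ssetm F'$ genuinely cannot cross the defining partition of a sub-cut. Everything else reduces to routine two-inclusion verifications.
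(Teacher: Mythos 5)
Your $(\Leftarrow)$ direction is essentially correct, but your $(\Rightarrow)$ direction, as written, is circular at exactly the point you yourself flag as the crux. You assert that the cuts $E(G)\cap[C_1,\bar{C_1}]$ and $E(G)\cap[C_2,\bar{C_2}]$ are nonempty ``since each contains $e_0$''. But for $e_0=\{u,v\}$ to lie in $E(G)\cap[C_1,\bar{C_1}]$ you need $v\in\bar{C_1}$, i.e.\ $C_1\ne C_2$ --- and in your text this is only ``read off'' afterwards, from the equality $F=E(G)\cap[C_1,\bar{C_1}]$, which minimality delivers only once nonemptiness is known. So the nonemptiness of the auxiliary cuts is justified by a consequence of that very nonemptiness. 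Nothing in your $(\Rightarrow)$ argument, as written, rules out $u$ and $v$ lying in the same component of $G\ssetm F$; it is in fact impossible, but proving that impossibility is precisely the missing step, and it genuinely requires using a witnessing partition of the cut $F$, which your $(\Rightarrow)$ argument never touches.

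The repair is short and uses the half of your own observation that you never invoke in this direction: since a bond is in particular a cut, write $F=E(G)\cap[A,\bar A]$, say with $u\in A$ and $v\in \bar A$. Components of $G\ssetm F$ lie wholly in $A$ or wholly in $\bar A$, so $C_1\subs A$ and $C_2\subs \bar A$; hence $C_1\ne C_2$, $v\in\bar{C_1}$ and $u\in\bar{C_2}$, and both auxiliary cuts really do contain $e_0$. With this inserted before the appeal to minimality, the rest of your argument goes through verbatim. A lesser remark on $(\Leftarrow)$: there you apply the observation to $F$ before knowing that $F$ is a cut, although the observation is stated only for cuts. This is harmless, because the half you need (every $G$-edge with exactly one endpoint in a component $C$ of $G\ssetm F$ lies in $F$) holds for an arbitrary $F\subs E(G)$ and its proof never uses the cut hypothesis --- but you should state it in that generality. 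Note finally that the paper gives no proof of this Fact, treating it as routine, so your write-up stands or falls on its own; after the above repair it is a complete proof.
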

%

The following statement will be used later several times.

\begin{proposition}\label{pr:bond}
Assume that $H$ is a subgraph of $G$, 
$F$ is a bond in $H$.
If $F$ is not a bond in $G$ then 
$F\subs \br D;2;$ for some connected component $D$ of $G\ssetm F$.
\end{proposition}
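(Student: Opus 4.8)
The plan is to reduce everything to the bond characterization of Fact~\ref{f:bond} and to exploit that deleting the same edge set $F$ from the smaller graph $H$ yields a \emph{finer} partition into connected components than deleting it from $G$. First I would apply Fact~\ref{f:bond} to $H$: since $F$ is a bond in $H$, there are distinct connected components $A_1,A_2$ of $H\ssetm F$ with $F=E(H)\cap[A_1,A_2]$, so every edge of $F$ has one endpoint in $A_1$ and the other in $A_2$. As $H$ is a subgraph of $G$, the graph $H\ssetm F$ is a subgraph of $G\ssetm F$; hence vertices joined by a path in $H\ssetm F$ remain joined in $G\ssetm F$, and each component of $H\ssetm F$ lies inside a single component of $G\ssetm F$. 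Pick components $D_1\supseteq A_1$ and $D_2\supseteq A_2$ of $G\ssetm F$.

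Next I would split according to whether $A_1$ and $A_2$ merge in $G\ssetm F$. If $D_1=D_2=:D$, then $A_1,A_2\subseteq D$, and because every edge of $F$ runs between $A_1$ and $A_2$ we obtain $F\subseteq\br D;2;$, which is exactly the desired conclusion. If instead $D_1\neq D_2$, I claim $F$ is a bond in $G$, contradicting the hypothesis. For this I would check $F=E(G)\cap[D_1,D_2]$: the inclusion $\subseteq$ follows from $A_1\subseteq D_1$, $A_2\subseteq D_2$ and $F\subseteq E(H)\subseteq E(G)$; for $\supseteq$, an edge of $E(G)$ joining $D_1$ to $D_2$ but not lying in $F$ would be an edge of $G\ssetm F$ linking the two distinct components $D_1,D_2$, which is impossible. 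Then $F=E(G)\cap[D_1,D_2]$ with $D_1\neq D_2$ distinct components of $G\ssetm F$, so Fact~\ref{f:bond} makes $F$ a bond in $G$ --- the contradiction we sought. Hence only the first case can occur, and the proposition follows.

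The step I expect to be most delicate is the $\supseteq$ inclusion in the second case, since it is exactly there that the distinction between components of $H\ssetm F$ and components of $G\ssetm F$ is used: the argument closes only because $D_1$ and $D_2$ are genuinely distinct components of $G\ssetm F$, so that no edge outside $F$ can bridge them. This is why the case split on whether $A_1$ and $A_2$ coalesce in $G\ssetm F$ is the heart of the proof; everything else is bookkeeping that translates the hypotheses into the hypotheses of Fact~\ref{f:bond}.
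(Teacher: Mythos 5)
Your proof is correct and follows essentially the same route as the paper's: both apply Fact~\ref{f:bond} to $H$, pass to the components $D_1,D_2$ of $G\ssetm M$ (rather, $G\ssetm F$) containing the two components of $H\ssetm F$, and show that if $D_1\neq D_2$ then $F=E(G)\cap[D_1,D_2]$, contradicting that $F$ is not a bond in $G$. Your explicit verification of the inclusion $E(G)\cap[D_1,D_2]\subs F$ is exactly what the paper's displayed chain of inclusions encodes, so there is nothing to add.
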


\begin{proof}
By Fact \ref{f:bond} 
there are two distinct connected components $C_1$ and $ C_2$ of 
$H\ssetm F$ such that $F=E(H)\cap [C_1,C_2]$.
If $C_1$ and $C_2$ are subsets of  different connected components
of $G$, $C_1\subs D_1$ and $C_2\subs D_2$, then 
\begin{equation}\notag
F=[C_1,C_2]\cap E(H)\subs [D_1,D_2]\cap E(G)\subs 
F\cup ([D_1,D_2]\cap E(G\setm F))=F,  
\end{equation}
i.e. 
$F=[D_1,D_2]\cap E(G)$ and so
$F$ is a bond in $G$ by Fact \ref{f:bond} above, which contradicts the assumptions. 
So  $C_1$ and $C_2$ are subsets of the same connected component
$D$ of $G\ssetm F$. Thus $F\subs [C_1, C_2]\subs \br D;2;$.
\end{proof}

Given a graph $G$ for  $x\ne y\in V(G)$ denote by ${\gamma}_G(x,y)$
the {\em edge connectivity} of $x$ and $y$ in $G$, i.e. 
\begin{equation}\notag
{\gamma}_G(x,y)=\min\{|F|:F\subs E(G):\text{$F$ separates $x$ and $y$
  in $G$}\}.  
\end{equation}
By the weak Erd\H os-Menger Theorem  there are 
${\gamma}_G(x,y)$ many edge disjoint paths between 
$x$ and $y$ in $G$.


\subsection{Logic}

The language of set theory is the first order language $\mc L$
containing only one binary relation symbol $\in$.
So the formulas of $\mc L$ are over the alphabet
$\{\lor, \neg,(,)\exists,=,\in\}\cup \mbox{Var}$, where $\mbox{Var}$
is an infinite set of variables.
To simplify our formulas we often use abbreviations like
$\forall x$, $\to$, $x \subs y$,
$\exists !x$, $\exists x\in y\ {\varphi}$, etc.

An {\em $\mc L$-structure} is a pair $\<M,E\>$, where
$E\subs M\times M$. In this paper we will consider only structures
in the form $\<M,\in\restriction M\>$
where $\in \restriction M$ is the restriction of the usual membership
relation to $M$, i.e. 
\begin{equation}\notag
\in \restriction M=\{\<x,y\>\in M\times M:x\in
y\}.  
\end{equation}
We usually write $\<M,\in\>$ or simply $M$ for $\<M,\in \restriction M\>$.

If ${\varphi}(x_1,\dots, x_n)$ is a formula,
$a_1,\dots, a_n$ are sets, then let
${\varphi}(a_1,\dots, a_n)$ be the formula obtained from
${\varphi}(x_1,\dots, x_n)$ by replacing each free
occurrence of $x_i$ with $a_i$. [An occurrence of $x_i$ is {\em free} it is not
  within the scope of a quantifier $\exists x_i$.]

If ${\varphi}(x,x_1,\dots, x_n)$ is a formula, $a_1,\dots, a_n$ are
sets,
then $C=\{a: {\varphi}(a,a_1,\dots, a_n)\}$ is a {\em class}.
Especially, every set $b$ is a class: $b=\{a:a\in b\}$.
Moreover, all sets form the class $V$:
$V=\{a:a=a\}$. In this paper we will consider just these classes: the
sets and the ``universal'' class $V$.

For a formula ${\varphi}(x_1,\dots, x_n)$, a class $M$, and for
$a_1,\dots, a_n\in M$ we define when
\begin{equation}
M\models {\varphi}(a_1,\dots, a_n),
\end{equation}
i.e. when {\em $M$ satisfies ${\varphi}(a_1,\dots, a_n)$}, by
induction on the complexity of the formulas
in the usual way: 
\begin{enumerate}[(i)]
\item $M\models$ ``$a_i\in a_j$'' iff $a_i\in a_j$,
\item $M\models$ ``${\varphi}\lor \psi$'' iff
$M\models {\varphi}$ or $M\models \psi$.
\item $M\models$ ``$\neg {\varphi}$'' iff $M\models {\varphi}$ fails.
\item $M\models $ ``$\exists x {\varphi}(x, a_1,\dots a_n)$''
iff there is an $a\in M$ such that
$M\models$ ``${\varphi}(a,a_1,\dots, a_n)$''
\end{enumerate}
For a formula ${\varphi}(x_1,\dots, x_n)$
let ${\varphi}^M(x_1,\dots,x_n)$ be the formula obtained
by replacing each quantifier $\exists x$ with
$\exists x\in M$ in ${\varphi}$.
Clearly for each $a_1,\dots, a_n\in M$,
\begin{equation}
{\varphi}^M(a_1,\dots, a_n)\text{ iff } M\models
{\varphi}(a_1,\dots, a_n).
\end{equation}

If ${\varphi}(x_1,\dots,x_n)$ is a formula, $M$ and $N$ are classes,
$M\subs N$, then we say that  ${\varphi}$ is {\em absolute between
$M$ and $N$,}
\begin{equation}
 M\prec_{\varphi}N
\end{equation}
in short, iff for each $a_1,\dots, a_n\in M$
\begin{equation}
 M\models {\varphi}(a_1,\dots,a_n)
\text{ iff } N\models{\varphi}(a_1,\dots,a_n)
\end{equation}
If $\Sigma$ is a collection of formulas then write
\begin{equation}
M\prec_\Sigma N
\end{equation}
iff $M\prec_{\varphi} N$ for each ${\varphi}\in \Sigma$.

$M$ is an {\em elementary submodel} of $N$,
\begin{equation}
 M\prec N
\end{equation}
iff $M\prec_{\varphi} N$ for each formula ${\varphi}$.

If ${\varphi}$ is absolute between $M$ and $V$, 
then we say that ${\varphi}$ is {\em absolute for $M$}.

\begin{theorem}[L\"owenheim-Skolem]\label{tm:ls}
For each set $N$ and infinite subset $A\subs N$ there is a set $M$
such that $A\subs M\prec N$ and $|M|=|A|$.
\end{theorem}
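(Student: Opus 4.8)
The plan is to build $M$ as a \emph{Skolem hull} of $A$ inside $N$. Since the language $\mc L$ is countable, there are only countably many formulas ${\varphi}(x,y_1,\dots,y_n)$, and to each of them I would attach a \emph{Skolem function}: using the Axiom of Choice, fix a well-ordering $<^*$ of $N$, and for every formula ${\varphi}(x,y_1,\dots,y_n)$ define $h_{\varphi}:N^n\to N$ by letting $h_{\varphi}(a_1,\dots,a_n)$ be the $<^*$-least $b\in N$ with $N\models{\varphi}(b,a_1,\dots,a_n)$ whenever such a $b$ exists, and the $<^*$-least element of $N$ otherwise. Thus each $h_{\varphi}$ picks, in a canonical way, a witness inside $N$ for the existential statement $\exists x\,{\varphi}(x,a_1,\dots,a_n)$ whenever $N$ thinks one exists.

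Next I would close $A$ under all of these functions. Set $M_0=A$ and
\begin{equation}\notag
M_{k+1}=M_k\cup\bigl\{h_{\varphi}(a_1,\dots,a_n): {\varphi}\text{ a formula},\ a_1,\dots,a_n\in M_k\bigr\},
\end{equation}
and let $M=\bigcup_{k<{\omega}}M_k$. For the cardinality, since $A$ is infinite and there are only countably many formulas, a routine induction shows $|M_k|=|A|$ for each $k$ (each step adds at most $\aleph_0\cdot|A|=|A|$ new elements), hence $|M|=|A|$; together with $A\subs M$, which gives $|A|\le|M|$, this yields $|M|=|A|$.

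It remains to check $A\subs M\prec N$, and $A\subs M$ is immediate. For elementarity I would invoke the Tarski--Vaught criterion: it suffices to show that for every formula ${\varphi}(x,y_1,\dots,y_n)$ and all $a_1,\dots,a_n\in M$, if there is some $b\in N$ with $N\models{\varphi}(b,a_1,\dots,a_n)$, then there is such a $b$ already in $M$. This is exactly what the closure of $M$ under the Skolem functions guarantees: the parameters $a_1,\dots,a_n$ lie in some $M_k$, so $b=h_{\varphi}(a_1,\dots,a_n)\in M_{k+1}\subs M$ is the desired witness. The Tarski--Vaught criterion itself is proved by induction on the complexity of formulas, where the atomic, negation, and disjunction cases are trivial and the only substantive case is the existential quantifier, in which one feeds the induction hypothesis into the witness-producing property above. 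This induction on formula complexity is the real content of the argument, and the existential step is the point I would expect to require the most care.
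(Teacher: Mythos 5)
Your proposal is correct: it is the standard Skolem-hull argument (choice gives a well-ordering of $N$, hence canonical Skolem functions $h_{\varphi}$; closing $A$ under them in ${\omega}$ stages preserves cardinality because the language is countable and each $h_{\varphi}$ has finite arity; the Tarski--Vaught test then yields $M\prec N$), and each step you describe goes through. Note that the paper states the L\"owenheim--Skolem theorem without any proof, citing it as classical background, so there is no in-paper argument to compare against; your write-up supplies exactly the textbook proof. One point worth appreciating, since it connects to the paper's main theme: your construction quantifies over \emph{all} formulas at once, which is legitimate here only because $N$ is a set, so the satisfaction relation $N\models{\varphi}(\vec a)$ is a single definable relation of the coded formula and the parameters; for the class $V$ no such uniform satisfaction relation exists, which is precisely why the paper must replace $M\prec V$ by $M\prec_\Sigma V$ for finite $\Sigma$ via the Reflection Principle.
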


Since $ZFC\not\,\vdash Con(ZFC)$ by 
G\"odel's Second Incompleteness Theorem, it is not provable in 
ZFC that there is a set $M$ with $M\models ZFC$.
So, since $V\models ZFC$, it is not provable in ZFC that there is a set
$M$  with $M\prec V$. Thus, in the L\"owenheim-Skolem theorem above, the
assumption that $N$ is a set was essential.
However, as we will see,  the following result can serve as a substitute
for the L\"owenheim-Skolem theorem for classes in certain cases. 

\begin{theorem}[Reflection Principle]\label{pr:refl}
Let $\Sigma$ be a finite collection of formulas. Then for each
cardinal ${\kappa}$ there is a cardinal ${\lambda}$ such that
$V_{\lambda}\prec_\Sigma V$, and 
$\br V_{\lambda};<{\kappa};\subs V_{\lambda}$.
\end{theorem}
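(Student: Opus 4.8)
The plan is to prove the two conclusions through a single construction, noting that $V_{\lambda}\prec_\Sigma V$ is a Tarski--Vaught-style closure condition on $\lambda$, while $\br V_{\lambda};<{\kappa};\subs V_{\lambda}$ is a cofinality condition on $\lambda$; the art is to hit both with one ordinal. First I would replace $\Sigma$ by the finite set $\Sigma'$ of \emph{all subformulas} of members of $\Sigma$, so that it suffices to obtain $V_{\lambda}\prec_{\Sigma'} V$, and then prove $V_{\lambda}\prec_\psi V$ for every $\psi\in\Sigma'$ by induction on the complexity of $\psi$. The atomic cases ($a\in b$ and $a=b$) are automatic, since satisfaction of these does not refer to the model; the cases $\neg\theta$ and $\theta_0\lor\theta_1$ reduce at once to the inductive hypothesis on their (shorter) subformulas, which again lie in $\Sigma'$. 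Hence the whole content concentrates in the existential step $\psi=\exists x\,\theta(x,\bar y)$: granting $V_{\lambda}\prec_\theta V$, the only nontrivial implication is that a witness existing in $V$ can be located inside $V_{\lambda}$.

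To secure this I would attach to each existential $\psi=\exists x\,\theta(x,y_1,\dots,y_k)$ in $\Sigma'$ the definable class function $G_\psi\colon V^k\to On$, where $G_\psi(\bar a)$ is the least ordinal ${\beta}$ for which some $b\in V_{\beta}$ has $V\models\theta(b,\bar a)$ (and $G_\psi(\bar a)=0$ if no such $b$ exists). Because $V_\mu$ is a set for every $\mu$, the Replacement Axiom guarantees that $\sup\{G_\psi(\bar a):\bar a\in V_\mu,\ \psi\in\Sigma'\text{ existential}\}$ is an ordinal for each $\mu$. I would then build a continuous strictly increasing sequence $\<\lambda_\xi:\xi<{\kappa}^+\>$ of cardinals with $\lambda_0={\kappa}$, choosing $\lambda_{\xi+1}$ to be a cardinal strictly above that supremum evaluated at $\mu=\lambda_\xi$, and taking unions at limits. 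Setting $\lambda=\sup_{\xi<{\kappa}^+}\lambda_\xi$, the supremum of cardinals $\lambda$ is again a cardinal, and since the sequence is strictly increasing of regular length ${\kappa}^+$ we get $\cf(\lambda)={\kappa}^+$.

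Two verifications then finish the argument. For reflection, fix $\bar a\in V_{\lambda}$ and an existential $\psi\in\Sigma'$; the finiteness of $\bar a$ places $\bar a\in V_{\lambda_\xi}$ for some $\xi<{\kappa}^+$, so if $V\models\exists x\,\theta(x,\bar a)$ then by construction $G_\psi(\bar a)<\lambda_{\xi+1}\le\lambda$, whence a witness $b$ lies in $V_{G_\psi(\bar a)}\subs V_{\lambda}$ and the inductive hypothesis on $\theta$ yields $V_{\lambda}\models\theta(b,\bar a)$, i.e. $V_{\lambda}\models\exists x\,\theta(x,\bar a)$; the converse implication is immediate from the same hypothesis. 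For fullness, take $a\in\br V_{\lambda};<{\kappa};$: the ranks of the members of $a$ form a set of ordinals below $\lambda$ of size $<{\kappa}\le\cf(\lambda)$, hence bounded by some ${\beta}<\lambda$, so $a\subs V_{\beta}$ and therefore $a\in V_{{\beta}+1}\subs V_{\lambda}$ as $\lambda$ is a limit ordinal.

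I expect the main obstacle to be exactly the tension between the two requirements. The reflection closure is most naturally achieved by iterating the functions $G_\psi$ finitely often and taking an $\omega$-chain, but that produces $\cf(\lambda)=\omega$ and destroys the fullness condition $\br V_{\lambda};<{\kappa};\subs V_{\lambda}$ whenever ${\kappa}>\aleph_0$. The decisive design choice is therefore to run the same closure construction along a chain long enough---here of length ${\kappa}^+$---to force $\cf(\lambda)\ge{\kappa}$ while still remaining closed under every $G_\psi$; balancing these competing demands is where the care is needed, the rest being the routine induction sketched above.
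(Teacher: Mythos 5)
Your proof is correct, and it is worth noting that the paper itself gives \emph{no} proof of this theorem: it is stated as standard background (the Montague--L\'evy reflection theorem, as in Jech or Kunen) and the text proceeds directly to its corollaries. What you have written is exactly the standard argument, correctly adapted to deliver the extra clause $\br V_{\lambda};<{\kappa};\subs V_{\lambda}$: close $\Sigma$ under subformulas, run the Tarski--Vaught induction so that everything reduces to the existential case, use Replacement to bound least-witness ranks over each $V_\mu$, and iterate. The one genuinely non-routine point --- that the usual $\omega$-chain would give $\cf({\lambda})={\omega}$ and ruin closure under ${<}{\kappa}$-sized subsets, so the chain must instead have regular length ${\kappa}^+$ (any regular length $\ge{\kappa}$ would do) --- is precisely the point you identify and handle, and your two final verifications (Tarski--Vaught for reflection, boundedness of ${<}{\kappa}$ many ranks below an ordinal of cofinality ${\kappa}^+$ for fullness) are both sound. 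The only cosmetic caveat is the trivial edge case of finite ${\kappa}$, where ``${\kappa}^+$'' should be read as ${\omega}_1$ or the closure clause noted to be automatic for limit ${\lambda}$; for the infinite ${\kappa}$ actually used in the paper your argument is complete.
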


We need some corollaries of this theorem. Let us recall that 
the {\em cofinality 
$\cf(\alpha)$} of an ordinal $\alpha $  is 
the least of the cardinalities of the cofinal subsets of $\alpha$.
A cardinal $\kappa$ is {\em regular} iff $\kappa=\cf(\kappa)$.

\begin{corollary}\label{cor:elem}
Let $\Sigma$ be a finite collection of formulas, ${\kappa}$ an infinite
cardinal, and $x$ a set.\\
(1) There is a set $M\prec_\Sigma V$ with $x\in M$ and $|M|={\kappa}$.\\
(2) If ${\kappa}>{\omega}$ is regular then there is a set  $M\prec_\Sigma V$ with
$x\in M$, $|M|<{\kappa}$ and $M\cap {\kappa}\in {\kappa}$.\\
(3) If ${\kappa}^{\omega}={\kappa}$ then there is
a set $M\prec_\Sigma V$ such that
$x\in M$, $|M|={\kappa}$, $M\cap {\kappa}^+\in {\kappa}^+$, and $\br
M;{\omega};\subs M$. \\
(4) If ${\kappa}>{\omega}$ is regular then the set
\begin{equation}\notag
S_x=\{M\cap {\kappa}:x\in M\prec_\Sigma V, M\cap {\kappa}\in {\kappa}\}
\end{equation}
contains a closed unbounded subset of ${\kappa}$.
\end{corollary}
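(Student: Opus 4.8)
The plan is to reduce every part to the L\"owenheim--Skolem theorem (Theorem \ref{tm:ls}), which applies to \emph{sets}, by first trapping everything inside a single set $V_{\lambda}$ that reflects $\Sigma$. By the Reflection Principle (Theorem \ref{pr:refl}) --- whose proof in fact produces arbitrarily large such $\lambda$ --- fix a cardinal $\lambda$ with $V_{\lambda}\prec_{\Sigma}V$, large enough that $x\in V_{\lambda}$ and $\kappa^{+}<\lambda$ (so $\kappa,\kappa^{+}\in V_{\lambda}$ and $\kappa^{+}\subseteq V_{\lambda}$). The decisive bookkeeping observation is transitivity of absoluteness: if $M\prec V_{\lambda}$ then $M\prec_{\Sigma}V_{\lambda}$, and since $V_{\lambda}\prec_{\Sigma}V$ we get $M\prec_{\Sigma}V$ for free. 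Thus it suffices throughout to build the required $M$ as an elementary submodel of the set $V_{\lambda}$. For (1) this is immediate: apply Theorem \ref{tm:ls} to $N=V_{\lambda}$ and $A=\{x\}\cup\kappa$ (a subset of $V_{\lambda}$ of size $\kappa$) to obtain $A\subseteq M\prec V_{\lambda}$ with $|M|=\kappa$; then $x\in M$, $|M|=\kappa$, and $M\prec_{\Sigma}V$.

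For (4), with $\kappa>\omega$ regular, I would construct an increasing continuous chain $\langle N_{\alpha}:\alpha<\kappa\rangle$ of elementary submodels of $V_{\lambda}$, with $x\in N_{0}$, $|N_{\alpha}|<\kappa$, $\alpha\in N_{\alpha+1}$, and $N_{\alpha}\cap\kappa\subseteq N_{\alpha+1}$ at successor steps (each step by Theorem \ref{tm:ls}, unions at limits). By the elementary-chain theorem each $N_{\alpha}\prec V_{\lambda}$, hence $N_{\alpha}\prec_{\Sigma}V$. Put $f(\alpha)=\sup(N_{\alpha}\cap\kappa)$; regularity of $\kappa$ together with $|N_{\alpha}|<\kappa$ gives $f(\alpha)<\kappa$, so the set $C$ of limit closure points of $f$ is a club in $\kappa$. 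For $\delta\in C$ the set $N_{\delta}\cap\kappa$ is downward closed (the requirement $N_{\alpha}\cap\kappa\subseteq N_{\alpha+1}$ makes it an initial segment) and bounded by $\delta$ (closure of $f$), while $\alpha\in N_{\alpha+1}$ forces every $\beta<\delta$ into $N_{\delta}$; hence $N_{\delta}\cap\kappa=\delta$. Therefore $\delta=N_{\delta}\cap\kappa\in S_{x}$, so $C\subseteq S_{x}$ and $S_{x}$ contains a club. Part (2) is then read off for free: $S_{x}\ne\empt$ yields some $M$ with $x\in M\prec_{\Sigma}V$, $M\cap\kappa\in\kappa$ and $|M|<\kappa$.

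For (3), assuming $\kappa^{\omega}=\kappa$, the new demand is $\br M;\omega;\subseteq M$, i.e.\ closure under countable subsets. Since a countable subset of the union of an $\omega$-chain need not lie in a single member, I would instead use a chain of length $\omega_{1}$: build increasing continuous $\langle M_{\alpha}:\alpha<\omega_{1}\rangle$ with $M_{\alpha}\prec V_{\lambda}$, $x,\kappa,\kappa^{+}\in M_{0}$, $|M_{\alpha}|=\kappa$, and at successors both $\br M_{\alpha};\omega;\subseteq M_{\alpha+1}$ and $M_{\alpha}\cap\kappa^{+}\subseteq M_{\alpha+1}$. The hypothesis $\kappa^{\omega}=\kappa$ keeps the sizes at $\kappa$, since $|\br M_{\alpha};\omega;|=\kappa^{\omega}=\kappa$, so Theorem \ref{tm:ls} can absorb the new elements into a model of size $\kappa$. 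Set $M=\bigcup_{\alpha<\omega_{1}}M_{\alpha}$; since $\kappa\geq2^{\omega}\geq\omega_{1}$ we get $|M|=\omega_{1}\cdot\kappa=\kappa$ and $M\prec_{\Sigma}V$. As $\cf(\omega_{1})=\omega_{1}>\omega$, any countable $a\subseteq M$ satisfies $a\subseteq M_{\alpha}$ for some $\alpha$, whence $a\in\br M_{\alpha};\omega;\subseteq M_{\alpha+1}\subseteq M$, giving $\br M;\omega;\subseteq M$. Finally $M\cap\kappa^{+}$ is downward closed and bounded below $\kappa^{+}$ (by regularity of $\kappa^{+}$ and $\cf(\kappa^{+})>\omega_{1}$), hence an ordinal in $\kappa^{+}$, exactly as in (4).

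The routine parts are the step-by-step applications of L\"owenheim--Skolem and the transitivity $M\prec V_{\lambda}\prec_{\Sigma}V\Rightarrow M\prec_{\Sigma}V$. The main point requiring care is verifying that the intersections $M\cap\kappa$ and $M\cap\kappa^{+}$ are genuine \emph{ordinals}: this rests on the initial-segment bookkeeping (feeding $N_{\alpha}\cap\kappa$, resp.\ the ordinals below the running supremum, into the next model) combined with the correct cofinality choice --- length $\kappa$ for (4) so that the closure points of $f$ form a club, and length $\omega_{1}$ for (3) so that countable sets are captured and the supremum of the $M_{\alpha}\cap\kappa^{+}$ stays below $\kappa^{+}$.
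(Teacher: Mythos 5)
Your proposal is correct and follows essentially the same route as the paper: reflect $\Sigma$ into a set $V_{\lambda}$, build (chains of) elementary submodels of $V_{\lambda}$ via L\"owenheim--Skolem, and transfer everything through $M\prec V_{\lambda}\prec_{\Sigma}V$, using an $\omega_{1}$-chain for (3) and a $\kappa$-chain with supremum bookkeeping for (4). The only cosmetic differences are that you obtain (2) as a by-product of the club construction of (4) (the paper uses a direct $\omega$-chain there), and that your written side condition ``$N_{\alpha}\cap\kappa\subseteq N_{\alpha+1}$'' should be read as requiring $\sup(N_{\alpha}\cap\kappa)\subseteq N_{\alpha+1}$ --- as literally stated it is vacuous in an increasing chain --- although your actual argument, combining $\alpha\in N_{\alpha+1}$ (which gives $\delta\subseteq N_{\delta}$) with the closure-point property of $f$ (which gives $N_{\delta}\cap\kappa\subseteq\delta$), already yields $N_{\delta}\cap\kappa=\delta$ without that remark.
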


\begin{proof}
Fix a cardinal $\mu\ge \kappa$ with $x\in V_{\mu}$.
By the Reflection Principle there is a cardinal ${\lambda}>\mu$
such that $V_{\lambda}\prec_\Sigma V$ and $\br
V_{\lambda};{\kappa};\subs V_{\lambda}$.\\

\noindent
(1) Straightforward from  the L\"owenheim-Skolem
  theorem: since $V_{\lambda}$ is a set, $|V_{\lambda}|\ge {\kappa}$,
and $x\in V_{\lambda}$
there is $M\prec V_{\lambda}$ with $x\in M$ and $|M|={\kappa}$.
Then $M\prec_\Sigma V$.

\medskip
\noindent
(2)
Construct a sequence $\<M_n:n<{\omega}\>$ of elementary submodels
 of $V_{\lambda}$ with $|M_n|<{\kappa}$ as follows.
Let $M_0$ be a countable elementary submodel of
$V_{\lambda}$ with $x\in M$.
If $M_n$ is constructed, let
${\alpha}_n=\sup (M_n\cap {\kappa}) $.
Since ${\kappa}$ is regular we have ${\alpha}_n<{\kappa}$.
By the L\"owenheim-Skolem theorem there is an elementary submodel
$M_{n+1}$ of $V_{\lambda}$ such that
 $M_n\cup {\alpha}_n\subs M_{n+1}$ and
$|M_{n+1}|=|M_n\cup {\alpha}_n|<{\kappa}$.
Finally let $M=\cup \{M_n:n<{\omega}\}$.
Then $M\prec V_{\lambda}$, and so
$M\prec_\Sigma V$,  and $M\cap {\kappa}=\sup {\alpha}_n\in {\kappa}$.

\medskip
\noindent
(3)
Construct an increasing  sequence 
$\<M_{\nu}:{\nu}<\oo\>$
of elementary submodels
 of
$V_{\lambda}$ with $|M_{\nu}|={\kappa}$ as follows.
Let $M_0$ be an  elementary submodel of
$V_{\lambda}$ with ${\kappa}\cup \{x\}\subs M_0$ and $|M_0|={\kappa}$.
For limit ${\nu}$ let $M_{\nu}=\cup\{M_{\beta}:{\beta}<{\nu}\}$.
If $M_{\nu}$ is constructed, let
${\alpha}_{\nu}=\sup (M_{\nu}\cap {\kappa}^+) $.
Since $|M_{\nu}|={\kappa}$ we have ${\alpha}_{\nu}<{\kappa}^+$.
Let $X_{\nu}=M_{\nu}\cup {\alpha}_{\nu}\cup \br M_{\nu};{\omega};$.
Then $|X_{\nu}|\le {\kappa}^{\omega}={\kappa}$.
By the L\"owenheim-Skolem theorem there is an elementary submodel
$M_{{\nu}+1}$ of $V_{\lambda}$ with $X_{\nu}\subs M_{\nu+1}$ and $|M_{{\nu}+1}|={\kappa}$.
Finally let $M=\cup \{M_{\nu}:{\nu}<\oo\}$.
Since ${\kappa}\ge {\omega}_1$,  $M\cap {\kappa}^+=\sup \{{\alpha}_{\nu}:{\nu}<\oo\}\in{\kappa}^+$.
If $A\in \br M;{\omega};$ then there is ${\nu}<\oo$
with $A\subs M_{\nu}$, and so $A\in X_{\nu}\subs  M_{{\nu}+1}\subs M$.

\medskip
\noindent
(4)
Construct a continuous increasing chain of elementary submodels
$\<M_{\nu}:{\nu}<{\kappa}\>$ of
$V_{\lambda}$ with $|M_{\nu}|\le {\nu}+{\omega}$ as follows.
Let $M_0$ be a countable   elementary submodel of
$V_{\lambda}$ with $x\in M$.
For limit ${\nu}$ let $M_{\nu}=\cup\{M_{\beta}:{\beta}<{\nu}\}$.
If $M_{\nu}$ is constructed, let
${\alpha}_{\nu}=\sup (M_{\nu}\cap {\kappa}^+) $.
Since $|M_{\nu}|<{\kappa}$ and ${\kappa} $ is regular
we have ${\alpha}_{\nu}<{\kappa}$.
Let $X_{\nu}=M_{\nu}\cup ({\alpha}_{\nu}+1)$.
Since $|X_{\nu}|\le {\nu}+{\omega}$,
by the L\"owenheim-Skolem theorem there is an elementary submodel
$M_{{\nu}}$ of $V_{\lambda}$ with $X_{\nu}\subs M_{\nu}$ and
$|M_{\nu}|=|X_{\nu}|$.

Then $C=\{{\alpha}_{\nu}:{\nu}<{\kappa}\}$ is a closed
unbounded  subset of
${\kappa}$ and $C\subs S_x$ because ${\alpha}_{\nu}\in S_x$ is witnessed
by $M_{\nu}$.
\end{proof}

\subsection{Absoluteness}
A set $b$ is {\em definable from parameters $a_1,\dots, a_n$} 
 iff there is a formula
${\varphi}(x)$ such that
\begin{equation}
\forall x ({\varphi}(x,a_1,\dots, a_n)\leftrightarrow x=b).
\end{equation}
We say that $b$ is {\em definable}  iff 
we do not need any parameters, i.e.
$\forall x ({\varphi}(x)\leftrightarrow x=b)$.

\begin{claim}\label{cl:def}
If $b$ is definable from the parameters $a_1, \dots a_n\in M$ by the formula 
${\varphi}(x, \vec y)$, and
$M\prec_{\{\exists x {\varphi}(x, \vec y), {\varphi}(x, \vec y)\}}V$, then $b\in M$.
\end{claim}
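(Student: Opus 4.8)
The plan is to exploit the two formulas in $\Sigma$ in opposite directions: the existential formula $\exists x\,{\varphi}(x,\vec y)$ will be pushed \emph{down} from $V$ to $M$ to produce a witness lying inside $M$, and the formula ${\varphi}(x,\vec y)$ itself will then be pushed \emph{back up} from $M$ to $V$ to identify that witness with $b$.

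First I would record what definability supplies in $V$. Since $b$ is defined from $a_1,\dots,a_n$ by ${\varphi}$, we have $\forall x\,({\varphi}(x,a_1,\dots,a_n)\leftrightarrow x=b)$; substituting $x=b$ yields ${\varphi}(b,a_1,\dots,a_n)$, so in particular $V\models\exists x\,{\varphi}(x,a_1,\dots,a_n)$. Next, because $a_1,\dots,a_n\in M$ and $\exists x\,{\varphi}(x,\vec y)\in\Sigma$, the absoluteness of this formula between $M$ and $V$ gives $M\models\exists x\,{\varphi}(x,a_1,\dots,a_n)$. By clause~(iv) of the definition of satisfaction, there is some $a\in M$ with $M\models{\varphi}(a,a_1,\dots,a_n)$. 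Applying absoluteness now in the other direction, using that ${\varphi}(x,\vec y)\in\Sigma$ and $a,a_1,\dots,a_n\in M$, we obtain $V\models{\varphi}(a,a_1,\dots,a_n)$, i.e.\ ${\varphi}(a,a_1,\dots,a_n)$ holds.

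Finally I would invoke uniqueness: the defining property forces ${\varphi}(a,a_1,\dots,a_n)\leftrightarrow a=b$, whence $a=b$. Since $a\in M$, this gives $b\in M$, as required.

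As for the main obstacle, there is no deep difficulty here; the content is essentially bookkeeping. The one point that genuinely needs care is to notice that \emph{both} members of $\Sigma$ are used, and in opposite directions: the existential formula only guarantees \emph{some} witness in $M$, and without pushing the formula ${\varphi}$ back up to $V$ one could not verify that this anonymous witness is the specific object $b$ singled out by the definition. Keeping straight which direction of the biconditional $M\models\psi\Leftrightarrow V\models\psi$ is applied to which formula is the only thing one must not muddle.
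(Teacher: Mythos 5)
Your proof is correct and follows exactly the paper's own argument: push $\exists x\,\varphi(x,\vec a)$ down from $V$ to $M$ to get a witness $b'\in M$, push $\varphi(b',\vec a)$ back up to $V$, and conclude $b=b'$ by uniqueness. The only difference is that you spell out explicitly why $V\models\exists x\,\varphi(x,\vec a)$ holds, which the paper leaves implicit.
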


\begin{proof}
Since $M\prec_{\exists x {\varphi}(x,\vec y)}V$, $\vec a\in M$ and so
$M\models \exists x {\varphi}(x,\vec a)$, there is $b'\in M$
such that $M\models {\varphi}(b',\vec a)$. Thus
$M\prec_{\varphi(x,\vec y)} V$
yields $V\models {\varphi}(b',\vec a)$, and so $b=b'\in M$.
\end{proof}

Given a class $N$
we say that a formula ${\varphi}(x_1,\dots, x_n,y)$ {\em defines the  operation
$F^N_{\varphi}$ in $N$} iff $N\models\forall x_1,\dots, ,x_n\exists! y
{\varphi}(x_1,\dots, x_n,y)$,
and for each $a_1,\dots, a_n,b\in N$, $F^N_{\varphi}(a_1,\dots, a_n)=b$ iff
$N\models{\varphi}(a_1,\dots, a_n, b)$.
If $V=N$ then we omit the superscript $V$.

Given a class $N$
we say that the operation $F_{\varphi}$ is {\em absolute for $N$} provided
 ${\varphi}$ defines an  operation in $N$,
and ${\varphi}(\vec x,y)$  is absolute for $N$.

\begin{claim}\label{cl:op}
If the formula ${\varphi}$ defines the operation $F_{\varphi}$ in $V$,
and we have $M\prec_{\{\forall \vec x\exists y {\varphi}(\vec x,y), {\varphi}(\vec x,y)\}} V$,
then  ${\varphi}$ defines an operation $F^M_{\varphi}$ in $M$,
and $F^M_{\varphi}=F_{\varphi}\restriction M$.
\end{claim}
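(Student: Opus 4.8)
The plan is to verify directly the two requirements from the definition of ``${\varphi}$ defines an operation in $M$'': first that $M\models \forall \vec x\,\exists! y\,{\varphi}(\vec x,y)$, and second, simultaneously, that the resulting operation coincides with $F_{\varphi}\restriction M$. The collection over which we are granted absoluteness is $\Sigma=\{\forall \vec x\,\exists y\,{\varphi}(\vec x,y),\ {\varphi}(\vec x,y)\}$, which contains the \emph{existence} sentence but conspicuously \emph{not} the uniqueness statement $\forall \vec x\,\exists! y\,{\varphi}(\vec x,y)$. I therefore expect the transfer of uniqueness to be the only delicate point; everything else is a direct appeal to one of the two listed formulas.

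First I would dispatch existence. Since ${\varphi}$ defines $F_{\varphi}$ in $V$, in particular $V\models \forall \vec x\,\exists y\,{\varphi}(\vec x,y)$; as this is a closed sentence and $M\prec_{\forall \vec x\,\exists y\,{\varphi}(\vec x,y)}V$, absoluteness immediately yields $M\models \forall \vec x\,\exists y\,{\varphi}(\vec x,y)$. For uniqueness I would reflect the two alleged witnesses back into $V$ and use uniqueness there: fix $\vec a\in M$ and suppose $b,b'\in M$ satisfy $M\models {\varphi}(\vec a,b)$ and $M\models {\varphi}(\vec a,b')$. Applying $M\prec_{{\varphi}(\vec x,y)}V$ twice gives $V\models {\varphi}(\vec a,b)$ and $V\models {\varphi}(\vec a,b')$, and since ${\varphi}$ defines an operation in $V$ we have $V\models \exists! y\,{\varphi}(\vec a,y)$, forcing $b=b'$. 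Thus $M\models \forall \vec x\,\exists! y\,{\varphi}(\vec x,y)$, so ${\varphi}$ does define an operation $F^M_{\varphi}$ in $M$.

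The agreement then falls out of the very same $\varphi$-absoluteness: given $\vec a\in M$, put $b=F^M_{\varphi}(\vec a)\in M$, so $M\models {\varphi}(\vec a,b)$, whence $V\models {\varphi}(\vec a,b)$ and therefore $F_{\varphi}(\vec a)=b$. Hence $F^M_{\varphi}(\vec a)=F_{\varphi}(\vec a)$ for every $\vec a\in M$, i.e.\ $F^M_{\varphi}=F_{\varphi}\restriction M$. The one conceptual observation worth highlighting is precisely that the uniqueness formula need not be placed in $\Sigma$: uniqueness in $M$ is obtained for free from uniqueness in $V$ together with the single-formula absoluteness of ${\varphi}$, so the two formulas listed in the hypothesis genuinely suffice.
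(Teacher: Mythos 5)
Your proof is correct and follows essentially the same route as the paper's: transfer the existence sentence by its listed absoluteness, reflect any two witnesses back into $V$ via the absoluteness of ${\varphi}(\vec x,y)$ to inherit uniqueness from $V$, and use the same $\varphi$-absoluteness once more to identify $F^M_{\varphi}$ with $F_{\varphi}\restriction M$. The observation you highlight — that the uniqueness statement need not be included in $\Sigma$ — is exactly what the paper's proof exploits as well, just without remarking on it.
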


\begin{proof}
Since $M\prec_{\forall \vec x\exists y {\varphi}(\vec x,y)}V$,
for each $a_1,\dots, a_n\in M$ there is $b\in M$ such that
$M\models {\varphi}(\vec a,b)$. Thus $V\models {\varphi}(\vec a,b)$, and so
$F_{\varphi}(\vec a)=b\in M$.
If $M\models {\varphi}(\vec a,b)\land {\varphi}(\vec a,b')$
then $V\models {\varphi}(\vec a,b)\land {\varphi}(\vec a,b')$, so $b=b'$.
Thus $M\models \forall \vec x\exists ! y {\varphi}(\vec x,y)$.
\end{proof}

\section{First application of elementary submodels.}
\label{sc:first}
In this section we present an example 
\begin{itemize}
\item 
to  illustrate our basic method,
\item
to 
indicate
the main technical problem of this approach;
and also
\item  to give
a solution to that technical problem.
\end{itemize}

In \cite{nw} Nash-Williams proved that
 {\em a graph $G$ is decomposable into cycles if and only if
it has no odd cut.}
In Section \ref{sc:nw} we give a new proof of this result.
Let us say that a  graph $G$ is {\em NW} iff it does not have any odd
cut.
We will prove the Nash-Williams Theorem by induction on $|V(G)|$.
Since the statement is trivial for countable graphs, it is enough
to decompose an uncountable  NW-graph $G$ into  NW-graphs of
smaller cardinality.
We will use ``small'' elementary submodels to cut  the graph $G$
into the right pieces.
To do so we need two lemmas, the first (and easy)
one will serve as the first example of the application
of our method.

\newcommand{\kkappa}{W}

First we assume that we could work with a full elementary submodel of $V$,
and we   discuss later how to get around the technical 
difficulties that arise in this naive approach.

\begin{lemma}\label{lm:weak}
If $G=\<{\kkappa},E\>$ is an NW-graph,
$G\in M\prec V$, then $G\rrestriction M=G[M\cap {\kkappa}]$ is also an NW-graph.
\end{lemma}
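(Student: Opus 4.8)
The plan is to show that \emph{every odd cut of $G\rrestriction M=G[M\cap W]$ is already a cut of $G$}; since $G$ is NW this is impossible, so $G[M\cap W]$ can have no odd cut either. Write $c=M\cap W$ for the vertex set of $G\rrestriction M$. Suppose, towards a contradiction, that $F=E\cap[A,c\setm A]$ is an odd cut of $G[c]$ for some $A\subs c$; being odd it is in particular finite.

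First I would pull $F$ into $M$. Every edge $\{x,y\}\in F$ has $x,y\in c\subs M$, and $\{x,y\}$ is definable from the parameters $x,y$, so $\{x,y\}\in M$ by Claim~\ref{cl:def}. As $F$ is a \emph{finite} set whose elements all lie in $M$, it is definable from its finitely many members, and hence $F\in M$; consequently $G\ssetm F\in M$ as well, being definable from $G$ and $F$.

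The heart of the argument is the following reflection of connectivity: for $u,v\in c$, the vertices $u,v$ lie in the same connected component of $G\ssetm F$ if and only if they do in $(G\rrestriction M)\ssetm F=G[c]\ssetm F$. The nontrivial direction is from $G$ down to $G[c]$, and this is where elementarity enters: a connecting path is a \emph{finite} object, so ``there is a $u$--$v$ path in $G\ssetm F$'' is a first-order statement whose parameters $u,v,G\ssetm F$ all lie in $M$. Since it holds in $V$, it holds in $M$ by $M\prec V$, giving a path $P\in M$; as the vertices of $P$ are definable from $P$ they all lie in $M\cap W=c$, so $P$ is in fact a path of $G[c]\ssetm F$. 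The reverse implication is trivial, $G[c]\ssetm F$ being a subgraph of $G\ssetm F$. I expect this reflection step to be \textbf{the main obstacle}; everything after it is bookkeeping.

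It then follows that each connected component of $G\ssetm F$ meeting $c$ meets it in exactly one component of $G[c]\ssetm F$. Since $A$ and $c\setm A$ are unions of components of $G[c]\ssetm F$ (removing $F$ deletes every edge of $G[c]$ across the partition), I would set $B$ to be the union of those components of $G\ssetm F$ whose trace on $c$ is contained in $A$. A routine check then gives $F=E\cap[B,W\setm B]$: each edge of $F$ joins $A$ to $c\setm A$ and so crosses the partition, while each edge of $E\setm F$ has both endpoints in a single component of $G\ssetm F$, hence on one side of $B$. Thus $F$ is a cut of $G$ of odd cardinality, contradicting that $G$ is NW, and the lemma follows.
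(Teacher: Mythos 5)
Your proof is correct, and it shares with the paper the decisive elementarity step: pulling the finite cut $F$ into $M$ (via $\br M;<{\omega};\subs M$) and then reflecting the statement ``there is a path in $G\ssetm F$ between two vertices of $M$'' down into $M$, so that connectivity in $G$ between vertices of $M$ implies connectivity in $G\rrestriction M$. But the surrounding combinatorics is genuinely different. The paper first replaces the odd cut by an odd \emph{bond}, notes that this bond cannot be a bond of $G$ (as $G$ is NW), and invokes Proposition \ref{pr:bond} to place both endpoints of some edge of $F$ in one component of $G\ssetm F$; the reflected path then contradicts $F$ being a bond of $G\rrestriction M$ --- one edge, one path, done. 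You instead prove the stronger and cleaner statement that \emph{every finite cut of $G\rrestriction M$ is a cut of $G$}: you lift the partition $(A, c\setm A)$ to a partition $(B, W\setm B)$ of $V(G)$ by grouping the components of $G\ssetm F$ according to their traces on $c$, the path-reflection guaranteeing that these traces are well-behaved (each component of $G\ssetm F$ traces exactly one component of $G[c]\ssetm F$). Your route avoids Fact \ref{f:bond} and Proposition \ref{pr:bond} entirely and yields a reusable upward-reflection fact about cuts, at the cost of the component-lifting bookkeeping; the paper's route minimizes graph-theoretic bookkeeping by delegating it to the bond machinery it has already set up (and which it reuses later, e.g.\ in Lemmas \ref{lm:nw2} and \ref{lm:bf}). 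One further small difference: you quantify over paths of arbitrary finite length in a single formula, whereas the paper fixes the length $m\in M$ as a parameter; both are fine under $M\prec V$, and both adapt to the finitely-many-formulas version (Lemma \ref{lm:nw1}) needed later.
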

%

\begin{proof}
Assume on the contrary that  $G\rrestriction M$ has an odd cut
$F=\{f_1,\dots, f_{2n+1}\}$. Since any cut is the disjoint union of
bonds  we can assume that
$F$ is a bond.
Since $F$ can not be a bond in $G$, by
Proposition \ref{pr:bond} there is a connected component
$D$ of $G\ssetm F$ such that $F\subs \br D;2;$.
Let $bc\in F$.
Then $b$ and $c$ are in $D$, $D$ is connected, so 
there is a path
$bw_1w_2\dots w_{m-1}c$ between $b$ and $c$ in $G$ which avoids $F$.

\begin{claim}\label{cl:omegax}
$\br M;<{\omega};\subs M$.
\end{claim}

\begin{proof}[Proof of the claim]
Consider the operations $F_1(x,y)=\{x,y\}$ and
$F_2(z)=\cup z$. By Claim \ref{cl:op}, there are formulas
${\sigma}_1, {\sigma}'_1, {\sigma}_2$ and ${\sigma}'_2$
such that if $N \prec_{\{{\sigma}_i,{\sigma}'_i\}}V$ then $N$ is closed
under operation $F_i$, $i=1,2$.

Since $M\prec V$, this yields that $M$ is closed under $F_1$ and $F_2$.
Since
\begin{equation}
\{a_0,\dots, a_{n}\}=
\cup\{\{a_0,\dots, a_{n-1}\},\{a_n\}\}
\end{equation}
we obtain $\br M;<{\omega};\subs M$ by induction on $n$.
\end{proof}

\begin{claim}\label{ck:osub}
${\omega}\cup\{{\omega}\}\subs M$.
\end{claim}

\begin{proof}[Proof of the Claim]
$\empt$ and ${\omega}$ are definable, so by Claim \ref{cl:def}
there are formulas ${\rho}_1$ and ${\rho}_1'$, and 
${\rho}_2$ and ${\rho}_2'$, respectively, such that
if $N\prec_{\{{\rho}_1,{\rho}'_1\}}V$ then $\empt\in N$, and if
$N\prec_{\{{\rho}_2,{\rho}'_2\}}V$ then ${\omega}\in N$.
Since $M\prec V$, this implies $\empt, {\omega}\in M$.

Consider the operation $F_3(x)=x\cup\{x\}$.
By Claim \ref{cl:op}, there are formulas
${\sigma}_3 $ and  $ {\sigma}'_3$
such that if $N \prec_{\{{\sigma}_3,{\sigma}'_3\}}V$ then $N$ is closed
under operation $F_3$.
Since $M\prec V$, this yields that $M$ is closed under $F_3$.
So $0\in M$ and $n+1=F_3(n)$ imply ${\omega}\subs M$.
\end{proof}

So  we have $F\in M$ and $m\in  M$.
Consider the following formula
${\varphi}_1(G,m,f,b,c,F)$:
\begin{multline}
\text{$G$ is a graph, 
$f$ is a function,
$\dom(f)=m$, $\ran(f)\subs V(G)$,}\\
\text{$f(0)=b, f(m-1)=c\land
(\forall i<m-1)\ \{f(i),f(i+1)\}\in E(G)\setm F.$}
\end{multline}
Since
\begin{equation}\label{f1}
\exists f\ {\varphi}_1(G,m,f,b,c,F),
\end{equation}
the assumption $M\prec_{\exists f {\varphi_1}(G,m,f,b,c,F)} V$  and $G,m,b,c,F\in M$ imply that
the same formula holds in $M$.
So there is $f\in M$
such that
\begin{equation}\label{f2}
{\varphi}_1(G,m,f,b,c,F).
\end{equation}
Since $M\prec_{{\varphi}_1(G,m,f,b,c,F)}  V$ we have
\begin{equation}
{\varphi}_1(G,m,f,b,c,F).
\end{equation}
To complete the proof we need one more claim.
\begin{claim}\label{cl:eval}
If $g\in M$ is a function, $x\in \dom(g)$,
then $g(x)\in M$.
\end{claim}
\begin{proof}[Proof of the Claim]
Consider the evaluation operation
$F_4(g,y)=g(y)$.
By Claim \ref{cl:op}, there are formulas
${\sigma}_4$ and $ {\sigma}'_4$
such that if $N \prec_{\{{\sigma}_4,{\sigma}'_4\}}V$ then $N$ is closed
under operation $F_4$.
Since $M\prec V$, this yields that $M$ is closed under the evaluation
operation $F_4$.
\end{proof}

By  Claim  \ref{cl:eval} above,
$\ran(f)\subs M\cap {\kkappa}$,   and so
 $f(0)f(1)\dots f(m-1)$ is a path between $b$ and $c$ in
$G\rrestriction M$
which avoids $F$. Contradiction.
\end{proof}

So if $M$ is a ``small'' elementary submodel of $V$, then
$G\rrestriction M$ is a ``small'' NW-subgraph of $G$.
 Unfortunately, as we explained before the formulation of the
 Reflection Principle,
 we can not get any set  $M$ with $M\prec V$ 
 by the Second Incompleteness Theorem of G\"odel.
So we can not apply the lemma above to prove the Nash-Williams Theorem.

Fortunately, this is just a technical problem because  one can observe
that 
in the proof above we have not used the full
power of $M\prec V$,
we applied the absoluteness only for finitely many
formulas between $V$ and $M$.
Namely, we used only the absoluteness for the formulas from the family
\begin{equation}
\Sigma^*= \{{\sigma}_i,{\sigma}'_i,: i=1,2,3,4\}
\cup \{{\rho}_j,{\rho}'_j,: j=1,2\}
\cup\{
\exists f {\varphi}_1,{\varphi}_1\}.
\end{equation}

So actually the proof of Lemma \ref{lm:weak} yields the following result:
\begin{lemma}\label{lm:nw1}
If $G=\<{\kkappa},E\>$ is an NW-graph,
$G\in M\prec_\Sigma  V$ for some large enough finite set $\Sigma$ of
formulas,
then $G\rrestriction M$ is also an NW-graph.
\end{lemma}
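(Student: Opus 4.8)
The plan is not to build a new argument but to revisit the proof of Lemma \ref{lm:weak} and record precisely where the full elementarity $M\prec V$ was invoked, showing that in each instance only the absoluteness of finitely many formulas was actually used. Collecting all of these formulas into a single finite set $\Sigma^*$ then yields the result, with $\Sigma=\Sigma^*$.

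First I would observe that the combinatorial skeleton of the earlier proof uses nothing about $M$ beyond $G\in M$: assuming $G\rrestriction M$ has an odd cut, reducing it to an odd bond $F$, applying Proposition \ref{pr:bond} to locate a connected component $D$ of $G\ssetm F$ with $F\subseteq [D]^2$, and choosing an edge $bc\in F$ together with a path from $b$ to $c$ in $G$ avoiding $F$. Elementarity enters at exactly four points, each handled by Claim \ref{cl:op} or Claim \ref{cl:def}, and each contributing only two formulas. Namely: Claim \ref{cl:omegax} (that $[M]^{<\omega}\subseteq M$, which gives $F\in M$) needs the formulas $\sigma_1,\sigma_1',\sigma_2,\sigma_2'$ witnessing closure under $F_1(x,y)=\{x,y\}$ and $F_2(z)=\cup z$; Claim \ref{ck:osub} (that $\omega\cup\{\omega\}\subseteq M$, which gives $m\in M$) needs $\rho_1,\rho_1',\rho_2,\rho_2'$ for the definability of $\emptyset$ and $\omega$ together with $\sigma_3,\sigma_3'$ for closure under $F_3(x)=x\cup\{x\}$; the production of the path $f$ inside $M$ needs the absoluteness of $\exists f\,\varphi_1$ and of $\varphi_1$ itself; and finally Claim \ref{cl:eval} (that $\ran(f)\subseteq M$) needs $\sigma_4,\sigma_4'$ for the evaluation operation $F_4(g,y)=g(y)$.

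I would then set
\[
\Sigma^*=\{\sigma_i,\sigma_i':i=1,2,3,4\}\cup\{\rho_j,\rho_j':j=1,2\}\cup\{\exists f\,\varphi_1,\varphi_1\},
\]
a finite collection, and check that every step listed above remains valid under the weaker hypothesis $M\prec_{\Sigma^*}V$ in place of $M\prec V$: each invocation of Claim \ref{cl:op} or \ref{cl:def} requires absoluteness only for the two formulas it introduces, all of which lie in $\Sigma^*$. Hence the path $f$ produced by $\varphi_1$ has its range inside $M$, giving a path from $b$ to $c$ in $G\rrestriction M$ avoiding $F$, contradicting that $F$ is a cut of $G\rrestriction M$. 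Taking $\Sigma=\Sigma^*$ completes the argument.

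There is no genuine obstacle here; the entire content of the lemma is the bookkeeping observation that $M\prec V$ was used only finitely many times. The one point deserving attention is verifying that each appeal to Claims \ref{cl:op} and \ref{cl:def} genuinely contributes only finitely many formulas—two per operation or per definable constant—so that the union $\Sigma^*$ is finite; this is immediate from the statements of those claims, which concern a single pair of formulas at a time.
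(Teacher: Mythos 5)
Your proposal is correct and is essentially identical to the paper's own proof: the paper likewise obtains Lemma \ref{lm:nw1} by observing that the proof of Lemma \ref{lm:weak} invoked absoluteness only for the formulas $\{\sigma_i,\sigma_i': i=1,2,3,4\}\cup\{\rho_j,\rho_j': j=1,2\}\cup\{\exists f\,\varphi_1,\varphi_1\}$, and taking $\Sigma$ to be exactly this finite family $\Sigma^*$. Your itemization of where each pair of formulas enters (Claims \ref{cl:omegax}, \ref{ck:osub}, \ref{cl:eval}, and the existence of the path $f$) matches the paper's bookkeeping precisely.
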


In many proofs
 we will  argue in the following way:
 \begin{enumerate}[(I)]
 \item using the Reflection Principle  we can find a  cardinal ${\lambda}$
such that $V_{\lambda}$ resembles $V$ in two ways:
\begin{enumerate}[(1)]
\item $\br V_{\lambda};{\kappa};\subs V_{\lambda}$ for some large enough
cardinal ${\kappa}$, and
\item  $V_{\lambda}\prec_\Sigma V$ for some large enough finite collection
$\Sigma$ of formulas.
\end{enumerate}
 \end{enumerate}
We can not use the model $V_{\lambda}$ directly, because it is too large, but
 \begin{enumerate}[(I)]\addtocounter{enumi}{1}
\item since $V_{\lambda}$ is a set, we can use the L\"owenheim-Skolem Theorem
to find a small elementary submodel $M$ of $V_{\lambda}$ which
contains $G$.
 \end{enumerate}
 Then $M\prec_\Sigma V$.

We do not fix $\Sigma$ in advance. Instead of this
we write down the proof, and after that
we put all the formulas for which we used the absoluteness
into $\Sigma$.
Actually, apart from the proof of Lemma \ref{lm:nw1} above,
 we will not construct $\Sigma$ explicitly.

\begin{remark}
We will show later that if $\Sigma$ is large enough then
$G\ssetm M$ is also an NW-graph,
so the pair $\<G\rrestriction M,G\ssetm M\>$ is a decomposition 
of $G$ into
NW-graphs.
\end{remark}

\subsection{More on absoluteness}
In Claim \ref{cl:omega} below we summarize certain observations 
we made in the proof of
Lemma \ref{lm:weak} above.

\begin{claim}\label{cl:omega}
There is a finite collection $\Sigma_0$ of formulas such that if
$M\prec_{\Sigma_0} V$ then
$\br M;<{\omega};\subs M$, ${\omega}\cup\{{\omega}\}\subs M$,
and $f(x)\in M$ for each function $f\in M$ and $x\in \dom(f)\cap M$.
\end{claim}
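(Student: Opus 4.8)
The plan is to collect, into one finite family $\Sigma_0$, all of the formulas whose absoluteness was invoked in the various subclaims of the proof of Lemma \ref{lm:weak}. The statement of Claim \ref{cl:omega} is essentially a bookkeeping consolidation of Claims \ref{cl:omegax}, \ref{ck:osub}, and \ref{cl:eval}, with the hypothesis $M\prec V$ weakened to $M\prec_{\Sigma_0} V$ for a suitable finite $\Sigma_0$. First I would recall that by Claim \ref{cl:op}, each basic set-theoretic operation $F_\varphi$ defined by a formula $\varphi$ yields a pair of formulas $\{\forall\vec x\,\exists y\,\varphi(\vec x,y),\varphi(\vec x,y)\}$ such that $\Sigma$-elementarity for just that pair forces $M$ to be closed under $F_\varphi$; likewise by Claim \ref{cl:def}, membership of a definable object $b$ in $M$ is guaranteed by $\Sigma$-elementarity for the corresponding pair $\{\exists x\,\varphi(x),\varphi(x)\}$.

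Next I would go through the three conclusions one at a time and name the finitely many formulas each needs. For $\br M;<{\omega};\subs M$, the proof of Claim \ref{cl:omegax} used closure under the pairing operation $F_1(x,y)=\{x,y\}$ and the union operation $F_2(z)=\cup z$, contributing the two pairs $\{{\sigma}_1,{\sigma}'_1\}$ and $\{{\sigma}_2,{\sigma}'_2\}$. For ${\omega}\cup\{{\omega}\}\subs M$, the proof of Claim \ref{ck:osub} used definability of $\empt$ and of ${\omega}$, giving the pairs $\{{\rho}_1,{\rho}'_1\}$ and $\{{\rho}_2,{\rho}'_2\}$, together with closure under the successor operation $F_3(x)=x\cup\{x\}$, giving $\{{\sigma}_3,{\sigma}'_3\}$. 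For the evaluation conclusion, the proof of Claim \ref{cl:eval} used closure under $F_4(g,y)=g(y)$, giving $\{{\sigma}_4,{\sigma}'_4\}$.

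I would then simply set
\begin{equation}\notag
\Sigma_0=\{{\sigma}_i,{\sigma}'_i:i=1,2,3,4\}\cup\{{\rho}_j,{\rho}'_j:j=1,2\},
\end{equation}
which is finite, and verify that $M\prec_{\Sigma_0}V$ suffices: applying Claim \ref{cl:op} to the relevant pairs gives closure of $M$ under $F_1,F_2,F_3,F_4$, and applying Claim \ref{cl:def} gives $\empt,{\omega}\in M$. From closure under $F_1,F_2$ and the identity $\{a_0,\dots,a_n\}=\cup\{\{a_0,\dots,a_{n-1}\},\{a_n\}\}$ one obtains $\br M;<{\omega};\subs M$ by induction on $n$, exactly as in Claim \ref{cl:omegax}; from $\empt,{\omega}\in M$ and closure under $F_3$ one obtains ${\omega}\cup\{{\omega}\}\subs M$; and closure under $F_4$ gives $f(x)\in M$ for $f\in M$ and $x\in\dom(f)\cap M$.

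There is essentially no mathematical obstacle here, since every ingredient is already proved; the only point requiring a little care is to confirm that each operation $F_i$ really is defined by a single first-order formula $\varphi$ in the language of set theory (so that Claim \ref{cl:op} genuinely applies), and that $\empt$ and ${\omega}$ are first-order definable without parameters (so that Claim \ref{cl:def} applies). Once that is checked, the finiteness of $\Sigma_0$ is immediate, and the proof is just the assembly described above. The mild subtlety worth flagging is that the inductive argument for $\br M;<{\omega};\subs M$ is a genuine (external) induction on the natural number $n$, not something happening inside $M$; this is fine precisely because the conclusion is a schema "for every concrete finite set" rather than a single internal statement.
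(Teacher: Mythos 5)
Your proposal is correct and follows essentially the same route as the paper: Claim \ref{cl:omega} is stated there precisely as a summary of Claims \ref{cl:omegax}, \ref{ck:osub} and \ref{cl:eval} from the proof of Lemma \ref{lm:weak}, and your $\Sigma_0$ is exactly the paper's family $\Sigma^*$ minus the two formulas $\exists f\,{\varphi}_1$ and ${\varphi}_1$, which indeed are only needed for the path argument in Lemma \ref{lm:nw1} and not for this claim. Your remarks on making $F_4$ a total operation and on the induction for $\br M;<{\omega};\subs M$ being external are sensible and consistent with what the paper leaves implicit.
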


We need two more easy claims.

\begin{claim}\label{cl:card}
There is a finite collection $\Sigma_1$ of formulas such that
if $M\prec_{\Sigma_1} V$ then for each $A\in M$ if
$|A|\subs M$ then $A\subs M$.
\end{claim}

\begin{proof}
Let $\Sigma_1\supset \Sigma_0$ be a finite family of formulas such that
\begin{enumerate}
\item the formulas ``{\em $f$ is a bijection between $x$ and $y$}''
and ``{\em $\exists f$ ($f$ is a bijection between $x$ and $y$)}''
are in $\Sigma_1$,
\item if $M\prec_{\Sigma_1} V$ then $M$
is closed under the ``cardinality'' operation $A\mapsto |A|$.
\end{enumerate}

Assume that $|A|={\kappa}$.
Then ${\kappa}\in M$ by $(2)$.
Since $V\models ``${\em $\exists f $ $f$ is a bijection between
  ${\kappa}$  and $A$}''
 there is $f\in M$ such that
$M\models $ ``{\em $f$ is a bijection from ${\kappa}$ onto $A$}''.
Then $f$ is a bijection from ${\kappa}$ to $A$ by (1).
So if $a\in A$ then there is ${\alpha}\in {\kappa}$
such that $f({\alpha})=a$. 
We assumed that $|A|\subs M$, so $\alpha\in M$ as well.
Thus $f,{\alpha}\in M$ implies
 $f({\alpha})\in M$ by $\Sigma_1\supset \Sigma_0$. Thus $A\subs M$.
\end{proof}

\begin{claim}\label{cl:omegasub}
If $M\prec_{\Sigma_0\cup \Sigma_1} V$ then for each countable 
set $A\in M$ we have $A\subs M$.
\end{claim}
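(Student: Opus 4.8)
The plan is to combine the two absoluteness ingredients already in hand: Claim \ref{cl:card}, which says that any $A \in M$ with $|A| \subs M$ satisfies $A \subs M$, and Claim \ref{cl:omega}, which guarantees ${\omega} \cup \{{\omega}\} \subs M$ whenever $M \prec_{\Sigma_0} V$. Since $A$ is countable, its cardinality $|A|$ is either a finite ordinal or ${\omega}$, so my strategy is to show that $|A| \subs M$ holds automatically, and then quote Claim \ref{cl:card} to conclude $A \subs M$.

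First I would assume $M \prec_{\Sigma_0 \cup \Sigma_1} V$ and take an arbitrary countable $A \in M$. By Claim \ref{cl:card} it suffices to verify $|A| \subs M$. Because $A$ is countable, $|A| \le {\omega}$, so $|A|$ is either a natural number $n$ or ${\omega}$ itself. By Claim \ref{cl:omega} we have ${\omega} \cup \{{\omega}\} \subs M$; in particular every natural number belongs to $M$, and every element of ${\omega}$ belongs to $M$. In the case $|A| = n < {\omega}$, the set $|A| = n = \{0, 1, \dots, n-1\}$ is a finite set of natural numbers, each of which lies in $M$ by ${\omega} \subs M$, hence $|A| = n \subs M$. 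In the case $|A| = {\omega}$, we have $|A| = {\omega} \subs M$ directly from ${\omega} \subs M$. Either way $|A| \subs M$.

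With $|A| \subs M$ established, I would simply invoke Claim \ref{cl:card} — which applies since $M \prec_{\Sigma_1} V$ and $A \in M$ — to obtain $A \subs M$, completing the argument. Note that the use of $\Sigma_0$ is exactly what supplies ${\omega} \cup \{{\omega}\} \subs M$, while $\Sigma_1$ supplies the cardinality machinery, so the hypothesis $M \prec_{\Sigma_0 \cup \Sigma_1} V$ is precisely what both quoted claims require.

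I do not expect any genuine obstacle here, since this is a short bookkeeping combination of two prior claims; the only point worth care is ensuring that the finite-cardinality case is handled cleanly, i.e. recognizing that a finite ordinal $n$ is itself a finite set of smaller natural numbers all lying in $M$. One could even absorb this case into the statement $\br M;<{\omega}; \subs M$ from Claim \ref{cl:omega}, which already gives that every finite subset of $M$ belongs to $M$, but the cleanest route is the direct observation that ${\omega} \subs M$ forces every $n \le {\omega}$ to satisfy $n \subs M$.
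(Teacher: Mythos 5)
Your proposal is correct and is essentially the paper's own proof: both derive $|A|\subs M$ from Claim \ref{cl:omega} (via ${\omega}\subs M$) and then invoke Claim \ref{cl:card} to conclude $A\subs M$. The only difference is that you explicitly treat the case $|A|=n<{\omega}$, which the paper silently subsumes by writing $|A|={\omega}$; this is a harmless (indeed slightly more careful) addition, not a different argument.
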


\begin{proof}
If $A$ is countable then $|A|={\omega}\subs M$ by Claim \ref{cl:omega}
 because $M\prec_{\Sigma_0}V$.
Thus $A\subs M$ by Claim \ref{cl:card} because $M\prec_{\Sigma_1}V$.
\end{proof}

\section{Classical theorems}\label{sc:classical}
In this section we prove some classical theorems using elementary submodels.
The Erdős-Rado Theorem was proved by
 Stephen G. Simpson, (see \cite{S} and \cite[Theorem 7.2.1]{CK}) using this technique,
and  for the late seventies the method became widely known among the set theory specialists,
so  the other proofs in this section are all from the folklore. 

\medskip

A family $\mc A$ is called a {\em $\Delta$-system with kernel $D$}
iff $A\cap A'=D$ for each $A\ne A'\in \mc A$.
A {\em $\Delta$-system} is a $\Delta$-system with some kernel.

\begin{theorem}
Every uncountable family $\mc A$
of finite sets contains an uncountable
$\Delta$-system.
\end{theorem}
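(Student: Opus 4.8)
The plan is to use the elementary submodel technique developed in the preceding sections, since this is precisely the kind of ``large structure contains a nice substructure'' statement flagged in the introduction. Let $\mc A$ be an uncountable family of finite sets; passing to a subfamily, I may assume $|\mc A|={\omega}_1$ and, since the sets are finite, that every member has the same cardinality $k$ (there are only countably many possible sizes, so uncountably many members share one). I would fix a finite collection $\Sigma$ of formulas large enough to include $\Sigma_0\cup\Sigma_1$ together with the few further formulas used below, and apply Corollary \ref{cor:elem}(4) with ${\kappa}={\omega}_1$ to obtain a closed unbounded set of ordinals ${\alpha}<{\omega}_1$ each of the form $M\cap{\omega}_1$ for some $M\prec_\Sigma V$ with $\mc A\in M$ and $M\cap{\omega}_1\in{\omega}_1$.

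The heart of the argument is to isolate the kernel inside a single elementary submodel. Fix one such $M\prec_\Sigma V$ with $\mc A\in M$ and ${\delta}:=M\cap{\omega}_1\in{\omega}_1$. Using a fixed enumeration $\mc A=\{A_{\xi}:{\xi}<{\omega}_1\}$ (which lies in $M$ by elementarity), pick any $A:=A_{\eta}$ with ${\eta}\ge{\delta}$, so $A\notin M$. Now set $D:=A\cap M$; I claim $D$ is the kernel. Since $A$ is finite and $\br M;<{\omega};\subs M$ by Claim \ref{cl:omega}, we have $D\in M$, and $D\subs M$. The key step is to show that for each element $a\in A\setm D$ the family of members of $\mc A$ containing $a$ is small in a sense $M$ can see, forcing almost all members of $\mc A$ to miss $a$; more efficiently, I would argue that the statement ``for every finite $d$, the set $\{{\xi}:A_{\xi}\cap d^{*}=d\text{ and }A_{\xi}\supseteq d\}$ is uncountable'' transfers between $M$ and $V$, letting me build the $\Delta$-system by a reflecting recursion.

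Concretely, the cleaner route is recursive. Suppose inductively I have chosen ${\xi}_{\beta}$ for ${\beta}<{\gamma}$ (with ${\gamma}<{\omega}_1$) so that the sets $A_{{\xi}_{\beta}}$ pairwise meet in a common $D$; I would fix $M\prec_\Sigma V$ containing $\mc A$, the enumeration, and the finitely many parameters chosen so far, with $M\cap{\omega}_1\in{\omega}_1$ and all previous indices in $M$. Choosing the kernel $D$ first (there is a fixed $D$ for which ``uncountably many $A_{\xi}$ have $A_{\xi}\cap\sup D^{+}=D$'' holds; here I exploit that each $A_{\xi}$ is finite so there are only countably many candidate kernels and one must recur uncountably often), I want the next member to have intersection exactly $D$ with everything already chosen. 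The formula asserting ``there exists ${\xi}$ with $A_{\xi}\cap M$-below-part equal to $D$ and $A_{\xi}$ otherwise disjoint from the chosen part'' holds in $V$, hence by $M\prec_\Sigma V$ it holds in $M$, yielding a witness inside $M$; since that witness's set lies below ${\delta}$ and the already-chosen material is captured, the new member meets each previous one exactly in $D$.

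The main obstacle is bookkeeping the kernel correctly: I must guarantee a \emph{single} kernel $D$ works for uncountably many stages, rather than a kernel that drifts. The trick is that $D=A\cap M$ is an \emph{initial segment} relative to the submodel in the sense that $D\subs M$ while $A\setm D$ lies entirely above ${\delta}=M\cap{\omega}_1$ (after arranging, via a suitable enumeration of each finite set, that the ``new'' elements are coded by ordinals $\ge{\delta}$); two members chosen at distinct stages with the same $D$ then automatically share only $D$, because the above-${\delta}$ parts were drawn from disjoint regions. Verifying this disjointness — that elements of $A\setm D$ genuinely avoid $M$ and hence avoid all previously chosen sets — is the one place requiring care, and it is exactly where $M\cap{\omega}_1\in{\omega}_1$ and the closure $\br M;<{\omega};\subs M$ of Claim \ref{cl:omega} do the work. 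Once that is in place, running the recursion for ${\omega}_1$ steps produces the desired uncountable $\Delta$-system.
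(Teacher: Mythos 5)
Your opening moves are the paper's: a countable $M\prec_\Sigma V$ with $\mc A\in M$, a member $A\in\mc A\setm M$, and the kernel $D=A\cap M$ (which lies in $M$ by Claim \ref{cl:omega}). But from there you switch to a transfinite recursion, and its key step is a genuine gap. At stage $\gamma$ you need, in $V$, some member of $\mc A$ containing $D$ and meeting the union $C_\gamma$ of the previously chosen members exactly in $D$; you simply assert that this ``holds in $V$, hence by $M\prec_\Sigma V$ it holds in $M$''. Establishing that it holds in $V$ is precisely the content of the theorem at that point, and neither of your offered justifications gives it. The pigeonhole argument (``only countably many candidate kernels, so some $D$ has uncountably many $A_\xi$ with $A_\xi\cap\sup D^{+}=D$'') is circular as written ($D$ occurs in its own defining condition), and, worse, the condition it produces is too weak: let $\mc A=\bigl\{\{\delta,\xi\}:\delta<\xi<\omega_1\bigr\}$ for a fixed infinite $\delta$. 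Every member has trace $\empt$ below $\sup\empt+1$, so $D=\empt$ satisfies your condition, yet any two members meet in $\{\delta\}$, so the recursion with kernel $\empt$ dies at its second step. The same example defeats the transfer statement of your second paragraph, where the level $d^{*}$ is tied to $d$. What rules such examples out is exactly the elementarity you never invoke at this point: $\delta$ is definable from $\mc A$, hence $\delta\in M$, hence the trace $D=A\cap M$ picks up $\delta$. So the kernel must be the trace on an elementary submodel, and the recursion step must be derived from elementarity, not from counting.

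The statement that actually runs your recursion is: for every $\beta<\omega_1$ with $D\subs\beta$ there is $B\in\mc A$ with $B\cap\beta=D$. Granted this, no further submodels are needed: at stage $\gamma$ apply it to $\beta_\gamma=\sup(D\cup C_\gamma)+1$. Its proof is an upward transfer, the reverse of the direction you use: for each $\beta\in M\cap\omega_1$ with $D\subs\beta$ the existential holds in $V$, witnessed by the external set $A$ itself, since $A\cap\beta=(A\cap M)\cap\beta=D$ (here one needs the paper's normalization $\mc A\subs\bigl[\omega_1\bigr]^{<\omega}$, which you only gesture at); hence it holds in $M$; hence $M$ satisfies the universally quantified sentence; hence, by $M\prec_\Sigma V$, $V$ does too. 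Two further slips in your version: the formula you transfer at stage $\gamma$ has the parameter ``$M$-below-part'', i.e.\ $M\cap\omega_1$, which is not an element of $M$, so elementarity cannot be applied to it; and at limit stages the history is countably infinite, not ``finitely many parameters'', so it must be packaged as a single set lying in the model. The paper avoids all of this bookkeeping with a maximality trick inside the single model $M$: the collection $\mbb B$ of $\Delta$-subsystems of $\mc A$ with kernel $D$ is in $M$, elementarity gives a $\subs$-maximal $\mc B\in\mbb B$ with $\mc B\in M$, and if $\mc B$ were countable then $\mc B\subs M$ (Claim \ref{cl:omegasub}), whence $\mc B\cup\{A\}$ would be a strictly larger member of $\mbb B$ --- a contradiction. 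Letting elementarity hand you a maximal object is what replaces your recursion and its bookkeeping; that is the trick to internalize here.
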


\begin{proof}
 We can assume that $\mc A \subs \br \oo;<{\omega};$.

Let $\Sigma$ be a large enough finite set of formulas.
By Corollary \ref{cor:elem}(1) there is a countable set $M$
such that  $\mc A\in M\prec_\Sigma V$.

Since $\mc A$ is uncountable, we can pick $A\in \mc A\setm M.$
Let $D=M\cap A$. Since $\br M;<{\omega};\subs M$ we have
$D\in M$ by Claim \ref{cl:omega}.
Let
\begin{equation}
\mbb B=\{\mc B\subs \mc A: \mc B\text{ is a $\Delta$-system with
  kernel $D$}\}.
\end{equation}
Since $\mc A, D\in M$ we have $\mbb B\in M$ as well. Moreover,
\begin{equation}
\exists \mc B\
\text{($\mc B$ is a $\subs$-maximal element of $\mbb B$).}
\end{equation}
Since $M\prec_{\Sigma}V$,
and the parameter $\mbb B$ is in $M$,
there is $\mc B\in M$ such that
\begin{equation}
M\models
\text{
($\mc B$ is a $\subs$-maximal element of $\mbb B$).}
\end{equation}
Since $M\prec_{\Sigma}V$,
we have
\begin{equation}
\text{
$\mc B$ is a $\subs$-maximal element of $\mbb B$.}
\end{equation}

\noindent{\bf Claim:} {\em $\mc B$ is uncountable}.\\
Assume on the contrary that $\mc B$ is countable.
Then, by claim \ref{cl:omegasub}, $M\prec_{\Sigma}V$
implies $\mc B\subs M$. Let  $\mc C=\mc B\cup\{A\}$.
Since $A\notin M$, $\mc C\supsetneq \mc B$.
 If $B\in \mc B$, then $B\in M$ and so $B\subs M$
and  $D\subs A\cap B\subs A\cap M=D$.
 So $\mc C\supsetneq \mc B$
is a $\Delta$-system with kernel $D$, i.e.
$\mc B$  was not a $\subs$-maximal element of $\mbb B$. This
contradiction
proves the claim.
\end{proof}

\begin{remark}
In each proof of this  section we will argue in the following way.
Let $\mc A$ be a structure of ``size'' ${\kappa}$.
Let $M\prec_\Sigma V$ for some large enough finite family
$\Sigma$ of formulas with $\mc A\in M$ and $|M|<{\kappa}$,
i.e. $M$ is a ``small'' elementary submodel which contains, as
an element,  a ``large'' structure $\mc A$. Since
$M$ has less elements than the size of $\mc A$, there is $A$ from $\mc
A$ such that $A\notin M$.  Then this $A$ has some ``{trace}''
$D$ on $M$. If $M$ is ``closed enough'' then this trace $D$ is in $M$.
Using this trace we define, in $M$, a maximal, ``nice''
substructure $\mc B$ of $\mc A$.
Then, using the fact that $A\notin M$, we try to prove that
$\mc B$ is large ``enough''.
\end{remark}

In the proof above we could use an arbitrary countable elementary submodel $M$ of
$V_{\lambda}$ with $\mc A\in M$.
However, in the next proof we need 
elementary submodels with some extra properties.

\begin{theorem}
If $\mc A$ is a family of finite sets such that
${\kappa}=|\mc A|$ is an  uncountable regular cardinal, then
$\mc A$ contains a
$\Delta$-system of size ${\kappa}$.
\end{theorem}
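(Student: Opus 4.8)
The plan is to mimic the proof of the previous theorem almost verbatim, replacing the single countable elementary submodel by one tailored to the regular cardinal $\kappa$, and strengthening the final counting step from ``uncountable'' to ``of size $\kappa$''. First I would invoke Corollary \ref{cor:elem}(2): since $\kappa>\omega$ is regular and (enlarging the finite family $\Sigma$ of formulas as needed) I may assume $\mc A\in M$, there is a set $M\prec_\Sigma V$ with $\mc A\in M$, $|M|<\kappa$, and $\delta:=M\cap\kappa\in\kappa$. As $|\mc A|=\kappa>|M|$, I can choose some $A\in\mc A\setminus M$, and I set $D=A\cap M$. Because $A$ is finite, $D$ is a finite subset of $M$, hence $D\in M$ by Claim \ref{cl:omega}. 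Notice that no preliminary normalization of the sets (say to a common cardinality, or to subsets of $\kappa$) is needed for what follows.

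Next I would carry out the maximal-$\Delta$-system construction inside $M$. Let $\mbb B$ be the set of all $\Delta$-systems $\mc B\subseteq\mc A$ with kernel $D$; since $\mc A,D\in M$ we have $\mbb B\in M$. The union of a $\subseteq$-chain in $\mbb B$ again lies in $\mbb B$, so by Zorn's Lemma $\mbb B$ has a $\subseteq$-maximal element, and this statement, having the parameter $\mbb B\in M$, reflects down to $M$. Thus I can fix a $\mc B\in M$ which $M$, and hence by $M\prec_\Sigma V$ also $V$, sees as a $\subseteq$-maximal element of $\mbb B$.

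The heart of the argument, and the place where regularity is really used, is the claim that $|\mc B|=\kappa$. Suppose not, so $|\mc B|<\kappa$. By elementarity $|\mc B|\in M$, and since $|\mc B|<\kappa$ we get $|\mc B|\in M\cap\kappa=\delta$, so that $|\mc B|\subseteq\delta\subseteq M$; Claim \ref{cl:card} then yields $\mc B\subseteq M$. Now for any $B\in\mc B$ we have $B\in M$ with $B$ finite, so $B\subseteq M$ by Claim \ref{cl:omega}; therefore $A\cap B\subseteq A\cap M=D$, while $D\subseteq A$ and $D\subseteq B$ give $D\subseteq A\cap B$, whence $A\cap B=D$. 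Consequently $\mc B\cup\{A\}$ is again a $\Delta$-system with kernel $D$, and it properly extends $\mc B$ because $A\notin M\supseteq\mc B$. This contradicts the maximality of $\mc B$, so $|\mc B|=\kappa$ and $\mc B$ is the desired $\Delta$-system.

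The main obstacle, compared with the uncountable version, is exactly the implication $|\mc B|<\kappa\Rightarrow\mc B\subseteq M$: it requires $M$ to be an ``initial-segment'' model, in the sense that $M\cap\kappa$ is an ordinal, which is precisely what Corollary \ref{cor:elem}(2) supplies and why the regularity of $\kappa$ cannot be dropped here. Everything else — the existence of $A\notin M$, the closure $\br M;<{\omega};\subseteq M$, and the reflection of the maximal $\Delta$-system into $M$ — is routine given the preliminaries.
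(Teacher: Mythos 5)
Your proof is correct and follows the paper's own argument essentially verbatim: the same appeal to Corollary \ref{cor:elem}(2), the same trace $D=A\cap M$ of a set $A\in\mc A\setminus M$, the same reflected $\subseteq$-maximal $\Delta$-system with kernel $D$, and the same counting step via $|\mc B|\in M\cap{\kappa}$ and Claim \ref{cl:card}. The only cosmetic differences --- introducing the set $\mbb B$ explicitly (as in the paper's preceding theorem) and omitting the harmless normalization $\mc A\subseteq\br {\kappa};<{\omega};$ --- change nothing; note only that the step ``finite $B\in M$ implies $B\subseteq M$'' really rests on ${\omega}\subseteq M$ combined with Claim \ref{cl:card}, not on Claim \ref{cl:omega} alone.
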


\begin{proof}
We can assume that $\mc A \subs \br {\kappa};<{\omega};$.

Let $\Sigma$ be a large enough finite set of formulas.
By Corollary \ref{cor:elem}(2) there is a set $M$ with $|M|<{\kappa}$
such that  $\mc A\in M\prec_\Sigma V$ and $M\cap {\kappa}\in {\kappa}$.

Since $|\mc A|={\kappa}$, we can pick $A\in \mc A\setm M.$
Let $D=M\cap A$. Since $\br M;<{\omega};\subs M$ we have
$D\in M$ by Claim \ref{cl:omega}.
Then
\begin{equation}
\exists \mc B\
\text{($\mc B\subs \mc A$ is $\subs$-maximal among
the  $\Delta$-systems with kernel $D$).}
\end{equation}
Since $M\prec_{\Sigma}V$,
and the parameters $\mc A$ and $D$ are in $M$,
there is $\mc B\in M$ such that
\begin{equation}
M\models
\text{($\mc B\subs \mc A$ is $\subs$-maximal among
the  $\Delta$-systems with kernel $D$).}
\end{equation}
Since $M\prec_{\Sigma}V$,
\begin{equation}
\text{$\mc B\subs \mc A$ is $\subs$-maximal among
the  $\Delta$-systems with kernel $D$.}
\end{equation}

\noindent{\bf Claim:} {\em $|\mc B|={\kappa}$}.\\
Assume on the contrary that $|\mc B|<{\kappa}$.
Since $\mc B\in M$ we have
$|\mc B|\in M\cap {\kappa}$. Thus $|\mc B|\subs M$ and so
$\mc B\subs M$ by Claim \ref{cl:card}.

 Let  $\mc C=\mc B\cup\{A\}$.
If $B\in \mc B$, then $B\in M$ and so $B\subs M$
by $M\prec_{\Sigma}V$. Thus $B\cap A=D$. So $\mc C\supsetneq \mc B$
is a $\Delta$-system with kernel $D$. Contradiction.
\end{proof}

\newcommand{\cp}{\mf c^+}

To prove the next theorem we need elementary submodels with one more
additional property.
\newcommand{\kp}{{\kappa}^+}

\begin{theorem}
If ${\kappa}^{\omega}={\kappa}$ then
every family  $\mc A=\{A_{\alpha}:{\alpha}< \kp\}\subs
\br \kp;{\omega};$ contains a $\Delta$-system of size
$\kp$.
Especially, 
every family  $\mc A=\{A_{\alpha}:{\alpha}< \cp\}\subs
\br \cp;{\omega};$ contains a $\Delta$-system of size
$\cp$.
\end{theorem}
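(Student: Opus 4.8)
The plan is to reproduce the two preceding $\Delta$-system arguments almost verbatim, with the regular cardinal there replaced by $\kp$, while strengthening the closure of the elementary submodel to accommodate the fact that the members of $\mc A$ are now countably infinite rather than finite. As before we may assume $\mc A\subs\br\kp;{\omega};$, and the second assertion will be immediate from the first by taking $\kappa=\mf c$, since $\mf c^{\omega}=\mf c$.

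First I would apply Corollary \ref{cor:elem}(3), whose hypothesis is exactly $\kappa^{\omega}=\kappa$, to obtain a large enough finite $\Sigma$ and a model $M\prec_\Sigma V$ with $\mc A\in M$, $\kappa\subs M$, $|M|=\kappa$, $M\cap\kp\in\kp$, and, crucially, $\br M;{\omega};\subs M$. Since $|\mc A|=\kp>\kappa=|M|$ we may pick $A\in\mc A\setm M$ and put $D=M\cap A$. As $A$ is countable, $D$ is countable; if $D$ is finite then $D\in M$ by Claim \ref{cl:omega}, and if $D$ is countably infinite then $D\in\br M;{\omega};\subs M$, so in either case $D\in M$. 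With $\mc A,D\in M$ the existence of a $\subs$-maximal $\Delta$-system $\mc B\subs\mc A$ with kernel $D$ reflects down: there is $\mc B\in M$ with $M\models$ ``$\mc B$ is such a maximal system'', hence, by $M\prec_\Sigma V$, $\mc B$ really is a $\subs$-maximal $\Delta$-system with kernel $D$.

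The heart of the matter is to show $|\mc B|=\kp$. Suppose not, so $|\mc B|\le\kappa$. Then the cardinal $|\mc B|$, viewed as an ordinal, is $\le\kappa\subs M$, whence $|\mc B|\subs M$, and Claim \ref{cl:card} gives $\mc B\subs M$. For each $B\in\mc B$ we have $B\in M$ and $B$ countable, so Claim \ref{cl:omegasub} yields $B\subs M$; consequently $A\cap B\subs A\cap M=D\subs B$, i.e. $A\cap B=D$. Since $A\notin M$ and $\mc B\subs M$ we have $A\notin\mc B$, so $\mc B\cup\{A\}$ is a $\Delta$-system with kernel $D$ properly containing $\mc B$, contradicting maximality. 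Hence $|\mc B|=\kp$, and $\mc B$ is the desired $\Delta$-system.

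The only genuine obstacle, and the single place where the argument departs from the finite-set proofs, is arranging $D\in M$: when the members of $\mc A$ are countably infinite the trace $D=M\cap A$ may itself be infinite, so the closure $\br M;<{\omega};\subs M$ that sufficed before is no longer enough. This is precisely why one needs a submodel closed under countable subsets, and precisely where the hypothesis $\kappa^{\omega}=\kappa$ is used --- it is what allows Corollary \ref{cor:elem}(3) to deliver a model of size $\kappa$ (rather than larger) satisfying $\br M;{\omega};\subs M$. Everything else is a routine transcription of the earlier proofs with $\kp$ in the role of the regular cardinal.
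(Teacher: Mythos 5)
Your proof is correct and follows the paper's own argument essentially verbatim: the same appeal to Corollary \ref{cor:elem}(3) (whose hypothesis ${\kappa}^{\omega}={\kappa}$ supplies a model $M$ of size ${\kappa}$ with $M\cap{\kappa}^+\in{\kappa}^+$ and $[M]^{\omega}\subseteq M$), the same trace $D=M\cap A$ placed in $M$ via the closure under countable subsets, the same reflected maximal $\Delta$-system with kernel $D$, and the same counting argument ($|\mathcal{B}|\le{\kappa}$ forces $\mathcal{B}\subseteq M$, whence $\mathcal{B}\cup\{A\}$ contradicts maximality). Your closing remark about where ${\kappa}^{\omega}={\kappa}$ enters, and your use of ${\kappa}\subseteq M$ (which comes from the construction inside the proof of Corollary \ref{cor:elem}(3) rather than its bare statement, exactly as the paper itself tacitly does), match the paper's intent precisely.
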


\begin{proof}
Let $\Sigma$ be a large enough finite set of formulas.
By Corollary \ref{cor:elem}(3) there is a set $M$ with $|M|={\kappa}$
such that  $\mc A\in M\prec_\Sigma V$, $M\cap {\kappa}^+\in
{\kappa}^+$
and $\br M;{\omega};\subs M$.

Since $|\mc A|=\kp>|M|$,
we can pick $A\in \mc A\setm M.$
Let $D=M\cap A$. Then $D\in \br M;\le {\omega};$.
Since $\br M;<\omega;\subs M$ by Claim \ref{cl:omega}, 
and we assumed   $\br M;{\omega};\subs M$, 
we have $D\in M$.

Then
\begin{equation}
\exists \mc B\
\text{($\mc B\subs \mc A$ is $\subs$-maximal among
the  $\Delta$-systems with kernel $D$).}
\end{equation}
Since $M\prec_{\Sigma}V$ and $\br M;{\omega};\subs M$,
 the parameters $\mc A$ and $D$ are in $M$, so
there is $\mc B\in M$ such that
\begin{equation}
M\models
\text{($\mc B\subs \mc A$ is $\subs$-maximal among
the  $\Delta$-systems with kernel $D$).}
\end{equation}
Since $M\prec_{\Sigma}V$,
\begin{equation}
\text{$\mc B\subs \mc A$ is $\subs$-maximal among
the  $\Delta$-systems with kernel $D$.}
\end{equation}

\noindent{\bf Claim:} {\em $|\mc B|=\kp$}.\\
Assume on the contrary that $|\mc B|\le{\kappa}$.
 Thus $|\mc B|\subs {\kappa}\subs M$ and so
$\mc B\subs M$ by Claim \ref{cl:card}.

Let  $\mc C=\mc B\cup\{A\}$.
If $B\in \mc B$, then $B\in M$ and so $B\subs M$
and $A\cap B=D$
by $M\prec_{\Sigma}V$. So $\mc C\supsetneq \mc B$
is a $\Delta$-system with kernel $D$. Contradiction.
\end{proof}

Next we prove two classical partition theorems.
First we recall (a special case of) the arrow notation notation of Erdős and Rado.
Assume that  $\alpha,\beta$ and $\gamma$ ordinals. We write 
\begin{equation}
\alpha\to(\beta,\gamma)^2 
\end{equation}
iff given any function $f:\br \alpha;2;\to 2$ 
either there is a subset $B\subs \alpha$ of order type 
$\beta$ with $f''\br B;2;=\{0\}$, or 
there is a subset $C\subs \alpha$ of order type 
$\gamma$ with $f''\br C;2;=\{1\}$.

\begin{theorem}[Erd\H os--Dusnik--Miller]
If ${\kappa}=\cf({\kappa})>{\omega}$ then
${\kappa}\to ({\kappa},{\omega}+1)^2$.
\end{theorem}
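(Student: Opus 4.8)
The plan is to fix a colouring $f:\br {\kappa};2;\to 2$ and to try to build a $0$-homogeneous set of order type ${\kappa}$ (a witness for the first coordinate); whenever that construction breaks down I want to read off a $1$-homogeneous set of order type ${\omega}+1$. For $x<{\kappa}$ write $A_x=\{y<{\kappa}:f(\{x,y\})=0\}$ for the set of ``$0$-neighbours'' of $x$. The whole argument rests on a single local dichotomy, which I would isolate first.

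\emph{The local dichotomy.} For every $X\in\br {\kappa};{\kappa};$ either (a) there is $x\in X$ with $|A_x\cap X|={\kappa}$, or (b) $X$ already contains a $1$-homogeneous set of order type ${\omega}+1$. Indeed, if (a) fails then for each $x\in X$ the set $A_x\cap X$ has size $<{\kappa}$, so the $1$-neighbourhood of $x$ is co-small in $X$. Since ${\kappa}$ is regular, a finite union of sets of size $<{\kappa}$ is again of size $<{\kappa}$, so one may greedily choose $y_0<y_1<\dots$ in $X$ with $f(\{y_i,y_j\})=1$ for all $i<j<{\omega}$. Finally, because ${\kappa}>{\omega}$ is regular, the union $\bigcup_{n<{\omega}}(A_{y_n}\cap X)$ of countably many sets of size $<{\kappa}$ again has size $<{\kappa}$, so the common $1$-neighbourhood $\{y\in X:f(\{y_n,y\})=1\text{ for all }n\}$ is cofinal in ${\kappa}$; picking $y_{\omega}$ in it above $\sup_n y_n$ completes an ${\omega}+1$ pattern. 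This is the one place where both the uncountability and the regularity of ${\kappa}$ are essential, and producing the top point $y_{\omega}$ is the delicate half.

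Given the dichotomy I would run a recursion of length ${\kappa}$ with a shrinking ``reservoir''. Start with $R_0={\kappa}$; at stage ${\alpha}$, if $R_{\alpha}\in\br {\kappa};{\kappa};$ then apply the dichotomy to $X=R_{\alpha}$: either alternative (b) already yields the desired ${\omega}+1$ set and we stop, or we obtain $x\in R_{\alpha}$ with $|A_x\cap R_{\alpha}|={\kappa}$, and we set $x_{\alpha}=x$ and $R_{{\alpha}+1}=A_{x_{\alpha}}\cap R_{\alpha}\cap(x_{\alpha},{\kappa})$, which again has size ${\kappa}$ by regularity. Maintaining $R_{\alpha}\subseteq\bigcap_{{\beta}<{\alpha}}A_{x_{\beta}}$ guarantees $f(\{x_{\beta},x_{\alpha}\})=0$ for ${\beta}<{\alpha}$, so $\{x_{\alpha}:{\alpha}<{\kappa}\}$ is $0$-homogeneous of order type ${\kappa}$ — provided the recursion survives all ${\kappa}$ stages.

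The main obstacle is exactly the limit stages: there $R_{\nu}=\bigcap_{{\beta}<{\nu}}R_{{\beta}+1}$ is an intersection of $<{\kappa}$ many sets each of size ${\kappa}$, and such an intersection may collapse to size $<{\kappa}$, stalling the construction. This is where I would bring in a continuous increasing chain $\<M_{\nu}:{\nu}<{\kappa}\>$ of elementary submodels $\prec_{\Sigma}V$ with $f\in M_0$, $|M_{\nu}|<{\kappa}$, $\<M_{\xi}:{\xi}\le{\nu}\>\in M_{{\nu}+1}$ and ${\delta}_{\nu}=M_{\nu}\cap{\kappa}\in{\kappa}$ (these are built exactly as in the construction behind Corollary \ref{cor:elem}, and the ${\delta}_{\nu}$ form a club). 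The idea is to make every choice $x_{\nu}$ definable from $f$ and the earlier data, so that $\<x_{\beta}:{\beta}<{\nu}\>$ and the reservoir $R_{\nu}$ belong to $M_{\nu}$ — here I would lean on $M_{\nu}$ being closed under the relevant finite operations (Claim \ref{cl:omega}) and capturing its small subsets (Claim \ref{cl:card}). Then ``$R_{\nu}$ is unbounded in ${\kappa}$'' is a statement with parameters in $M_{\nu}$, hence absolute between $M_{\nu}$ and $V$, which is what I would use to keep the reservoir cofinal across limits and to take $x_{\nu}$ to be the least member of $R_{\nu}$ above ${\delta}_{\nu}$ (landing back in $M_{{\nu}+1}$). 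A clean way to package the other alternative is to force the top of the ${\omega}+1$ set to be ${\delta}_{\nu}=M_{\nu}\cap{\kappa}$ itself: if the reservoir collapses at some ${\nu}$, the colour pattern of the ordinals of $M_{\nu}$ toward ${\delta}_{\nu}$ must contain an infinite $1$-homogeneous subset, and adjoining ${\delta}_{\nu}$ on top gives order type ${\omega}+1$. I expect the bookkeeping ensuring $R_{\nu}\in M_{\nu}$, together with the verification that the reflection of unboundedness genuinely rescues the limit stages, to be the technically fussiest part of the write-up.
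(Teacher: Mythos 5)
Your local dichotomy is true and your proof of it is fine, and the successor steps of the reservoir recursion are also fine; but the limit-stage rescue --- which you yourself identify as carrying the whole weight --- does not work as described, and that is where the theorem actually lives. Two concrete problems. (i) The bookkeeping you propose is impossible. You choose $x_\beta$ above $\delta_\beta=M_\beta\cap\kappa$, and $x_\beta\in M_{\beta+1}$, so $\delta_\beta\le x_\beta<\delta_{\beta+1}$; hence at a limit $\nu$ the sequence $\langle x_\beta:\beta<\nu\rangle$ is cofinal in $\delta_\nu=M_\nu\cap\kappa$, and the reservoir $R_\nu$ is contained in $[\delta_\nu,\kappa)$, i.e.\ disjoint from $M_\nu$. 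Neither object can be an element of $M_\nu$: a model closed under the relevant definable operations (Claim \ref{cl:op}) that contained the sequence would contain its supremum $\delta_\nu$, and a model containing a nonempty $R_\nu$ would by elementarity contain one of its members; both contradict $M_\nu\cap\kappa=\delta_\nu$. So ``$R_\nu$ is unbounded in $\kappa$'' is not a statement with parameters in $M_\nu$; and in any case absoluteness only \emph{transfers} unboundedness between $M_\nu$ and $V$, it cannot create it --- for an arbitrary colouring the intersection may genuinely collapse. Even if you repaired this by making the recursion canonical (definable from $f$ alone, say $x_\alpha=\min R_\alpha$), so that initial segments do lie in the models, the collapse can still occur, and you are then left needing your fallback claim. (ii) That fallback claim --- if the reservoir collapses at $\nu$, then $M_\nu\cap\kappa$ contains an infinite $1$-homogeneous set all of whose members get colour $1$ with $\delta_\nu$ --- is asserted without any argument, and it is essentially the entire content of the theorem at that point. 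Note also that the points your construction has produced cannot witness it: the $x_\beta$ are pairwise $0$-coloured by construction, so at most one of them can belong to any $1$-homogeneous set.

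For contrast, the paper's proof avoids transfinite recursion (hence limit stages) altogether: it takes a \emph{single} $M\prec_\Sigma V$ with $f\in M$, $|M|<\kappa$, $M\cap\kappa\in\kappa$, one point $\xi\in\kappa\setminus M$, and a maximal $A\subseteq M\cap\kappa$ with $A\cup\{\xi\}$ $1$-homogeneous; if $A$ is infinite we are done, and if $A$ is finite, a maximal $0$-homogeneous subset $C$ of the common $1$-neighbourhood $B$ of $A$, chosen inside $M$, must have size $\kappa$, since otherwise $C\subseteq M$ (Claim \ref{cl:card}) and maximality of $A$ forces $f(\{\gamma,\xi\})=0$ for every $\gamma\in C$, so $C\cup\{\xi\}$ violates the maximality of $C$. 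If you want to keep your chain $\langle M_\nu:\nu<\kappa\rangle$, the salvageable version of your idea is: for each $\nu$ let $A_\nu\subseteq\delta_\nu$ be maximal with $A_\nu\cup\{\delta_\nu\}$ $1$-homogeneous; if some $A_\nu$ is infinite you are done, otherwise all $A_\nu$ are finite and Fodor's Lemma on the club $\{\delta_\nu:\nu<\kappa\}$ yields a stationary $S$ on which $A_\nu$ equals a fixed finite $A$, and then maximality shows $\{\delta_\nu:\nu\in S\}$ is $0$-homogeneous of size $\kappa$. In either repair the engine is a maximality argument (possibly plus pressing down), not a reservoir recursion; your dichotomy, though correct, ends up doing no work.
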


\begin{proof}
Fix a coloring  $f:\br {\kappa};2;\to 2.$

Let $\Sigma$ be a large enough finite set of formulas.
By Corollary \ref{cor:elem}(2) there is a set $M$ with $|M|<{\kappa}$
such that  $f\in M\prec_\Sigma V$ and $M\cap {\kappa}\in {\kappa}$.

\begin{figure}[h]
\psfrag*{0}{$0$}
\psfrag*{1}{$1$}
\psfrag*{A}{$A$}
\psfrag*{B}{$B$}
\psfrag*{C}{$C$}
\psfrag*{M}{$M$}
\psfrag*{xi}{${\xi}$}
\psfrag*{be}{${\beta}$}
\psfrag*{ga}{${\gamma}$}
\psfrag*{ka}{${\kappa}$}
\includegraphics[width=\textwidth]{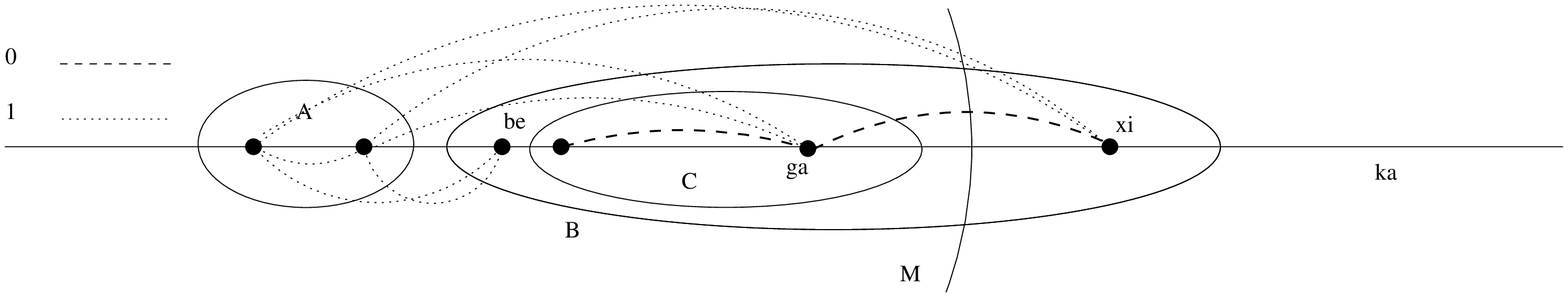}
\caption{}
\end{figure}

Fix ${\xi}\in {\kappa}\setm M$.
Let $A$ be a $\subs$-maximal subset of $M\cap {\kappa}$
such that $A\cup \{{\xi}\}$ is $1$-homogeneous.
If $A$ is infinite, then we are done.

Assume that $A$ is finite.
Let
\begin{equation}
B=\{{\beta}\in {\kappa}\setm A:\forall {\alpha}\in A\
f({\beta},{\alpha})=1\}. \end{equation}
Clearly ${\xi}\in B$.
Since $f,A\in M$ we have  $B\in M$.
Let $C\subs B$ be a $\subs$-maximal $0$-homogeneous subset.

\noindent{\bf Claim:} {\em $|C|={\kappa}$}.\\
Assume on the contrary that  $|C|<{\kappa}$.
Then $|C|\in M\cap {\kappa}$ and so $|C|\subs M$ because
$M\cap {\kappa}\in {\kappa}$.
Thus $C\subs M$
by Claim \ref{cl:card}.
Let ${\gamma}\in C$. Since ${\gamma}\in M\setm A$ we have that
$A\cup \{{\gamma}\}\cup \{{\xi}\}$ is not $1$-homogeneous.
But $A\cup \{{\xi}\}$ is  $1$-homogeneous and ${\gamma}\in B$,
so   $f({\gamma},{\xi})=0$.
Thus $C\cup \{{\xi}\}$ is $0$-homogeneous.
Since ${\xi}\in B$, we have ${\xi}\in C$ by the maximality of $C$,
which contradicts $B\subs M$.
\end{proof}

\begin{theorem}[Erd\H os--Rado]
$\cp \to (\cp, \oo+1)^2$.
\end{theorem}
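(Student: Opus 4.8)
The plan is to follow the Erd\H os--Dusnik--Miller argument of the previous theorem, but to replace its single maximal $1$-homogeneous set by a transfinite recursion of length $\oo$, the extra closure of the submodel doing the bookkeeping. Fix a colouring $f:\br \cp;2;\to 2$ and assume, towards the desired dichotomy, that there is \emph{no} $0$-homogeneous set of order type $\cp$; I will then produce a $1$-homogeneous set of order type $\oo+1$. Since $\mf c^{\omega}=\mf c$, Corollary \ref{cor:elem}(3), applied with ${\kappa}=\mf c$ and $x=f$, yields a set $M\prec_\Sigma V$ with $f\in M$, $\mf c\subs M$, $|M|=\mf c$, ${\delta}:=M\cap \cp\in \cp$, and $\br M;{\omega};\subs M$. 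Fix ${\xi}\in \cp\setm M$; as $M\cap \cp={\delta}$ is an initial segment of $\cp$, every ${\gamma}\in M\cap \cp$ satisfies ${\gamma}<{\delta}\le {\xi}$, so ${\xi}$ lies strictly above every ordinal of $M\cap\cp$.

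Next I would build, by recursion on ${\nu}<\oo$, ordinals ${\gamma}_{\nu}\in M\cap \cp$ keeping the two invariants
\[
f(\{{\gamma}_\mu,{\gamma}_{\nu}\})=1 \quad (\mu<{\nu}), \qquad f(\{{\gamma}_{\nu},{\xi}\})=1 ;
\]
that is, $\{{\gamma}_{\nu}:{\nu}<\oo\}$ is $1$-homogeneous and stays $1$-joined to ${\xi}$. At stage ${\lambda}<\oo$ the set $\{{\gamma}_{\nu}:{\nu}<{\lambda}\}$ is a countable subset of $M$, hence an \emph{element} of $M$ by $\br M;{\omega};\subs M$ (this is the one place the $\oo$-length recursion really needs the closure of $M$ under countable sequences). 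Consequently the set
\[
B_{\lambda}=\{{\beta}\in \cp: \forall {\nu}<{\lambda}\ ({\beta}\ne {\gamma}_{\nu}\land f(\{{\gamma}_{\nu},{\beta}\})=1)\}
\]
is definable from parameters in $M$, so $B_{\lambda}\in M$; and the second invariant gives ${\xi}\in B_{\lambda}$.

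Now I would run the Dusnik--Miller trick inside $B_\lambda$. Choose, in $M$, a $\subs$-maximal $0$-homogeneous subset $C_{\lambda}\subs B_{\lambda}$; for $\Sigma$ large enough the property ``$C$ is a $\subs$-maximal $0$-homogeneous subset of $B_{\lambda}$'' is absolute, so the $C_{\lambda}$ produced in $M$ is genuinely maximal. By the standing assumption no $0$-homogeneous set has order type $\cp$, so $|C_{\lambda}|\le \mf c$; then $|C_{\lambda}|\subs \mf c\subs M$ gives $C_{\lambda}\subs M$ by Claim \ref{cl:card}. Since ${\xi}\in B_{\lambda}\setm M$, if $f(\{{\gamma},{\xi}\})=0$ held for every ${\gamma}\in C_{\lambda}$, then $C_{\lambda}\cup\{{\xi}\}$ would be a $0$-homogeneous subset of $B_{\lambda}$ properly extending $C_{\lambda}$, contradicting maximality. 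Hence there is ${\gamma}\in C_{\lambda}$ with $f(\{{\gamma},{\xi}\})=1$; put ${\gamma}_{\lambda}={\gamma}$. As ${\gamma}_{\lambda}\in C_{\lambda}\subs B_{\lambda}\cap M$, both invariants persist and ${\gamma}_{\lambda}\in M\cap\cp$, completing the recursion step.

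Carrying the recursion through all ${\nu}<\oo$ yields $\oo$ many distinct ordinals $\{{\gamma}_{\nu}:{\nu}<\oo\}\subs M\cap \cp$, mutually $1$-coloured and each $1$-joined to ${\xi}$, with ${\xi}$ above them all; taking an initial segment of order type $\oo$ and appending ${\xi}$ on top produces a $1$-homogeneous set of order type $\oo+1$, as required. The one genuinely new ingredient over Dusnik--Miller --- and the step I expect to be the crux --- is keeping $B_{\lambda}\in M$ at the countable \emph{limit} stages of the recursion: this is exactly what the hypothesis $\mf c^{\omega}=\mf c$ buys, through the closure $\br M;{\omega};\subs M$ in Corollary \ref{cor:elem}(3). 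Everything else is the Dusnik--Miller dichotomy applied one coordinate at a time.
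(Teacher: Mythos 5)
Your proof is correct, and all of its load-bearing ingredients are the same as in the paper's proof: the submodel $M$ given by Corollary \ref{cor:elem}(3), an ordinal ${\xi}\in \cp\setm M$ lying above $M\cap \cp$, the set $B$ of ordinals $1$-joined to everything chosen so far, a $\subs$-maximal $0$-homogeneous $C\subs B$ taken inside $M$, Claim \ref{cl:card} to conclude $C\subs M$, and the observation that ${\xi}\in B\setm M$ together with maximality then produces the needed colour-$1$ pair. What differs is the outer organization. The paper argues in one shot: it fixes a $\subs$-maximal $A\subs M\cap \cp$ with $A\cup\{{\xi}\}$ $1$-homogeneous; if $A$ is uncountable the second horn holds, and if $A$ is countable then $A\in M$ by $\br M;{\omega};\subs M$, and the maximal $0$-homogeneous $C\subs B$ must have size $\cp$, since otherwise $C\subs M$, maximality of $A$ forces $f(\{{\gamma},{\xi}\})=0$ for every ${\gamma}\in C$, and maximality of $C$ then puts ${\xi}$ into $C\subs M$, a contradiction. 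You instead negate the first horn and unwind that single maximality appeal into a transfinite recursion of length $\oo$, re-running the $B$/$C$ trick at every stage, using $\br M;{\omega};\subs M$ at each infinite stage to keep $\{{\gamma}_{\nu}:{\nu}<{\lambda}\}$ (hence $B_{\lambda}$) in $M$, and using the negated horn to keep each $C_{\lambda}$ of size $\le \mf c$, hence inside $M$. In effect the paper's maximal $A$ is the finished product of your recursion: Zorn's lemma compresses your $\oo$ repetitions into one step, so the paper's proof is shorter and yields the dichotomy directly, while yours is more laborious but makes completely explicit where the hypothesis ${\mf c}^{\omega}=\mf c$ (through the closure $\br M;{\omega};\subs M$) is genuinely needed, namely at the countable limit stages. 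Both arguments are sound.
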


\begin{proof}
Fix a function $f:\br \cp;2;\to 2.$

Let $\Sigma$ be a large enough finite set of formulas.
By Corollary \ref{cor:elem}(3) there is a set $M$ with $|M|=\mf c$
such that  $f\in M\prec_\Sigma V$, $M\cap \cp\in
\cp$
and $\br M;{\omega};\subs M$.

Pick ${\xi}\in \cp\setm M$.

Let $A$ be a $\subs$-maximal subset of $M\cap {\kappa}$
such that $A\cup \{{\xi}\}$ is $1$-homogeneous.
If $A$ is uncountable, then we are done.

Assume that $A$ is countable. Since $\br M;{\omega};\subs M$,
we have $A\in M$.

Let
\begin{equation}
B=\{{\beta}\in {\kappa}\setm A:\forall {\alpha}\in A\
f({\beta},{\alpha})=1\}. \end{equation}
Since $f,A\in M$ we have  $B\in M$.
Let $C\subs B$ be a $\subs$-maximal $0$-homogeneous subset.

\noindent{\bf Claim:} {\em $|C|=\cp$}.\\
Assume on the contrary that  $|C|\le \mf c$.
Then $|C|\subs \mf c \subs M$ and so  $C\subs M$
by Claim \ref{cl:card}.
Let ${\gamma}\in C$. Since ${\gamma}\in M\setm A$ we have that
$A\cup \{{\gamma}\}\cup \{{\xi}\}$ is not $1$-homogeneous.
But $A\cup \{{\xi}\}$ is  $1$-homogeneous and ${\gamma}\in B$,
so   $f({\gamma},{\xi})=0$.
Thus $C\cup \{{\xi}\}$ is $0$-homogeneous.
Since ${\xi}\in B$, we have ${\xi}\in C$ by the maximality of $C$,
which contradicts $B\subs M$.
\end{proof}

Given a {\em set-mapping} $F:X\to \mc P(X)$
we say that a subset $Y\subs X$ is  {\em $F$-free}
iff $y'\notin F(y)$ for $y\ne y'\in Y$.

\begin{theorem}
If ${\kappa}=\cf({\kappa})>{\omega}$ and
$F:{\kappa}\to \br {\kappa};<{\omega};$
then there is an  $F$-free subset $C$ of size ${\kappa}$.
\end{theorem}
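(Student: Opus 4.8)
The plan is to follow the pattern of the other proofs in this section: take a small elementary submodel $M$ containing $F$, pick a point $\xi$ outside $M$, and extract a $\subs$-maximal ``free'' object that $\xi$ forces to have size ${\kappa}$. The new difficulty compared with the Dushnik--Miller and Erdős--Rado proofs is that a $\subs$-maximal $F$-free subset of ${\kappa}$ need \emph{not} be large: if some $c$ lies in $F({\eta})$ for ${\kappa}$-many ${\eta}$, then $\{c\}$ already blocks every ${\eta}$ from being added. So first I would neutralise the ``downward'' part of $F$.

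Split $F({\eta})=F^-({\eta})\cup F^+({\eta})$, where $F^-({\eta})=F({\eta})\cap{\eta}$ and $F^+({\eta})=F({\eta})\setm{\eta}$, and define by recursion on ${\eta}$
\[
r({\eta})=\sup\{r({\zeta})+1:{\zeta}\in F^-({\eta})\},
\]
with $\sup\empt=0$; this is legitimate since $F^-({\eta})\subs{\eta}$. The key observation is that $r({\eta})<{\omega}$ for every ${\eta}$: by transfinite induction, if every ${\zeta}<{\eta}$ has finite rank, then $\{r({\zeta})+1:{\zeta}\in F^-({\eta})\}$ is a \emph{finite} set of natural numbers, so its supremum is again finite. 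Hence the level sets $L_n=\{{\eta}:r({\eta})=n\}$ partition ${\kappa}$ into ${\omega}$ pieces, and each $L_n$ is free for $F^-$, because ${\zeta}\in F^-({\eta})$ forces $r({\zeta})<r({\eta})$, so two elements of the same level cannot lie one in the other's $F^-$-image. Since ${\kappa}=\cf({\kappa})>{\omega}$, some level $D:=L_{n^*}$ has size ${\kappa}$.

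Now I would bring in the submodel. Let $\Sigma$ be a large enough finite set of formulas and, by Corollary \ref{cor:elem}(2), fix $M\prec_\Sigma V$ with $F\in M$, $|M|<{\kappa}$ and ${\delta}:=M\cap{\kappa}\in{\kappa}$. As $r$ is definable from $F$ and ${\omega}\subs M$ by Claim \ref{cl:omega}, the function $n\mapsto L_n$ lies in $M$, so $D=L_{n^*}\in M$ as well. Because the union of a $\subs$-chain of $F$-free sets is $F$-free, a $\subs$-maximal $F$-free subset of $D$ exists; reflecting this existential statement into $M$ yields $C\in M$ that is genuinely a $\subs$-maximal $F$-free subset of $D$. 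It remains to prove $|C|={\kappa}$.

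Suppose $|C|<{\kappa}$. Since $C\in M$ and $M$ is closed under the cardinality operation, $|C|\in M\cap{\kappa}={\delta}$, whence $|C|\subs M$ and so $C\subs M$ by Claim \ref{cl:card}; note also $C\subs M\cap{\kappa}={\delta}$. As $|D|={\kappa}>|M|$, I may pick ${\xi}\in D\setm M$, so ${\xi}\ge{\delta}$. By maximality $C\cup\{{\xi}\}$ is not $F$-free, hence some $c\in C$ satisfies ${\xi}\in F(c)$ or $c\in F({\xi})$. In the first case $F(c)\in M$ is finite, so $F(c)\subs M$ by Claim \ref{cl:omegasub}, contradicting ${\xi}\notin M$. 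In the second case $c\in C\subs{\delta}\le{\xi}$ gives $c\in F^-({\xi})$; but $c,{\xi}\in D$ and $D$ is $F^-$-free, again a contradiction. Thus $|C|={\kappa}$, and $C$ is the desired free set. The only genuinely combinatorial step, and the main obstacle, is the finite-rank observation that tames the regressive part of $F$; once the level set $D$ is available, the submodel argument proceeds exactly as in the preceding theorems.
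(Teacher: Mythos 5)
Your proof is correct, but it takes a genuinely different route from the paper's. You rightly observe that a $\subs$-maximal $F$-free subset of ${\kappa}$ can be small (so the naive transcription of the Dushnik--Miller argument fails); the paper resolves this not by pre-decomposing ${\kappa}$ but by tailoring the ground set to the chosen point: it picks ${\xi}\in {\kappa}\setm M$ first, forms the finite trace $A=F({\xi})\cap M$ (which lies in $M$ since $\br M;<{\omega};\subs M$), and takes $C\in M$ to be a $\subs$-maximal $F$-free subset of ${\kappa}\setm A$ rather than of ${\kappa}$. Then, assuming $|C|<{\kappa}$ and hence $C\subs M$, your two obstructions die exactly as in your Case 1/Case 2 analysis: $c\in F({\xi})$ is impossible because $F({\xi})\cap C\subs A\cap C=\empt$, and ${\xi}\in F(c)$ is impossible because $F(c)$ is a finite set in $M$, hence $F(c)\subs M$; so $C\cup\{{\xi}\}\subs {\kappa}\setm A$ contradicts maximality. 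In contrast, you fix the ground set before any submodel appears, via the rank function $r$ on the regressive part $F^-$: your finite-rank lemma (which is correct, by transfinite induction, since $F^-({\eta})$ is finite) splits ${\kappa}$ into countably many $F^-$-free levels, and regularity of ${\kappa}>{\omega}$ yields a level $D$ of size ${\kappa}$ on which the downward obstruction cannot occur at all. What your route buys is a reusable ZFC decomposition fact of independent interest, in the spirit of classical free-set theorems, plus a submodel argument whose ground set does not depend on ${\xi}$; what the paper's trace trick buys is brevity and uniformity with the other proofs of that section, where taking the trace of the outside object on $M$ ($D=M\cap A$ for the $\Delta$-system theorems, $A=F({\xi})\cap M$ here) is always the single key move.
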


\begin{proof}
Let $\Sigma$ be a large enough finite set of formulas.
By Corollary \ref{cor:elem}(2) there is a set $M$ with $|M|<{\kappa}$
such that  $F\in M\prec_\Sigma V$ and $M\cap {\kappa}\in {\kappa}$.

Let
${\xi}\in {\kappa}\setm M$ and $A=F({\xi})\cap M$.
Let $C$ be a $\subs$-maximal $F$-free subset of ${\kappa}\setm A$.
Since $F,A\in M$ we can assume that $C\in M$.

\noindent{\bf Claim:} {\em $|C|={\kappa}$}.\\
Assume on the contrary that  $|C|<{\kappa}$. Then $C\subs M$
by Claim \ref{cl:card}. Since $F({\gamma})\subs M$ 
for ${\gamma}\in C$
and
$F({\xi})\cap C\subs  A\cap C=\empt$ we have that
$C\cup \{{\xi}\}$ is also $F$-free.
So $C$ was not $\subs$-maximal. Contradiction.
\end{proof}

First we prove a  weak form of Fodor's Pressing Down Lemma.
A function  $f$ mapping a set of ordinals into the ordinals is   
 called {\em regressive} iff $f(\alpha)<\alpha$ for each $\alpha\in \dom(f)$. 
\begin{theorem}
If ${\kappa}=\cf ({\kappa})>{\omega}$,
$f:{\kappa}\to {\kappa}$ is a regressive function
then there is ${\eta}<{\kappa}$ such that
$f^{-1}\{{\eta}\}$ is unbounded in ${\kappa}$.
\end{theorem}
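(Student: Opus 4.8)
The plan is to mimic the elementary-submodel template used throughout this section, exactly as in the proof of the set-mapping theorem and the Dushnik--Miller theorem. Given a regressive $f:{\kappa}\to{\kappa}$, I fix a large enough finite set $\Sigma$ of formulas and, by Corollary \ref{cor:elem}(2), choose a set $M$ with $|M|<{\kappa}$ such that $f\in M\prec_\Sigma V$ and $M\cap{\kappa}\in{\kappa}$. Write ${\delta}=M\cap{\kappa}\in{\kappa}$. The goal is to find a single value ${\eta}<{\kappa}$ whose preimage is unbounded, and the idea is that ${\delta}$ itself, viewed ``from outside'' $M$, forces this.

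The key step is to look at the ordinal ${\delta}=M\cap{\kappa}$. Since ${\kappa}$ is regular and uncountable and $|M|<{\kappa}$, we have ${\delta}<{\kappa}$, and since $M\cap{\kappa}$ is an ordinal, every element of $M\cap{\kappa}$ is below ${\delta}$. Now consider the value $f({\delta})$. Because $f$ is regressive, $f({\delta})<{\delta}$, so $f({\delta})\in M\cap{\kappa}$; set ${\eta}=f({\delta})$, and note ${\eta}\in M$. The plan is then to argue by contradiction: suppose $f^{-1}\{{\eta}\}$ is bounded in ${\kappa}$. The statement ``$f^{-1}\{{\eta}\}$ is bounded in ${\kappa}$'', i.e. $\exists{\beta}<{\kappa}\ \forall{\alpha}<{\kappa}\ (f({\alpha})={\eta}\to{\alpha}<{\beta})$, has all its parameters $f,{\eta},{\kappa}$ in $M$, so by $M\prec_\Sigma V$ (with $\Sigma$ containing this formula and its existential closure) there is a bound ${\beta}\in M$; then ${\beta}\in M\cap{\kappa}={\delta}$, so ${\beta}<{\delta}$. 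But $f({\delta})={\eta}$ and ${\delta}\ge{\beta}$, contradicting that ${\beta}$ bounds $f^{-1}\{{\eta}\}$. Hence $f^{-1}\{{\eta}\}$ is unbounded.

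I expect the main obstacle to be purely bookkeeping about absoluteness rather than any combinatorial difficulty: one must make sure that ${\eta}=f({\delta})$ really lands in $M$. This uses two ingredients: that $M$ is closed under evaluation of functions it contains (so that $f(x)\in M$ whenever $f\in M$ and $x\in\dom(f)\cap M$, which is Claim \ref{cl:omega}), applied not to ${\delta}$ directly (${\delta}\notin M$!) but via regressivity --- the honest route is to observe $f({\delta})<{\delta}=\sup(M\cap{\kappa})$, hence $f({\delta})<{\gamma}$ for some ${\gamma}\in M\cap{\kappa}$, and then absoluteness of the bounded search below ${\gamma}$ pins ${\eta}=f({\delta})$ inside $M$. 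The one subtle point is that ${\delta}$ itself is \emph{not} an element of $M$, so I cannot simply say ``$f,{\delta}\in M$ implies $f({\delta})\in M$''; the correct phrasing exploits only that $f({\delta})$ lies below the supremum ${\delta}$ of the ordinals already in $M$. Once ${\eta}\in M$ is secured, the contradiction via a reflected bound ${\beta}\in M\cap{\kappa}$ is immediate, and collecting the finitely many formulas used (evaluation, boundedness and its existential closure) into $\Sigma$ completes the argument in the usual way.
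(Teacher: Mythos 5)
Your proof is correct and is essentially the paper's own argument: the paper likewise takes ${\xi}=\sup(M\cap {\kappa})=M\cap {\kappa}$, sets ${\eta}=f({\xi})\in M$ by regressivity, and derives the contradiction from boundedness of $f^{-1}\{{\eta}\}$ by reflecting the bound into $M$ (there phrased as $\sup f^{-1}\{{\eta}\}\in M\cap {\kappa}$, while ${\xi}\in f^{-1}\{{\eta}\}$). The ``subtle point'' in your last paragraph is a non-issue: since $M\cap{\kappa}$ is itself an ordinal, ${\eta}=f({\delta})<{\delta}=M\cap{\kappa}$ immediately gives ${\eta}\in M\cap{\kappa}\subseteq M$, exactly as you already argued in your second paragraph, so no bounded-search absoluteness is needed.
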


\begin{proof}
Let $\Sigma$ be a large enough finite set of formulas.
By Corollary \ref{cor:elem}(2) there is a set $M$ with $|M|<{\kappa}$
such that  $f\in M\prec_\Sigma V$ and $M\cap {\kappa}\in {\kappa}$.

Let ${\xi}=\sup (M\cap {\kappa})$ and
consider ${\eta}=f({\xi})$.
We claim that $T=f^{-1}\{{\eta}\}$ is unbounded in ${\kappa}$.
Since ${\eta}\in{\xi}=M\cap {\kappa}$ we have  $T\in N$.
If $T$ is bounded,  then
$\sup T\in M\cap {\kappa}={\xi}$. However ${\xi}\in T$, so $T$
should be unbounded.
\end{proof}

\begin{theorem}(Fodor's Pressing Down Lemma)
If ${\kappa}=\cf ({\kappa})>{\omega}$,
$S\subs {\kappa}$ is stationary, and $f:S\to {\kappa}$ is a regressive
function
then there is an ordinal ${\eta}<{\kappa}$ such that
$f^{-1}\{{\eta}\}$ is stationary.
\end{theorem}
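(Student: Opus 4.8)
The plan is to mirror the elementary-submodel argument used for the weak Pressing Down Lemma immediately above, but to exploit the fact that $S$ is stationary so that we can intersect with a club of submodel traces. The key technical input is Corollary \ref{cor:elem}(4): for a fixed large enough finite $\Sigma$ and a fixed parameter $x$ (which I will take to code $f$ and $S$), the set $S_x=\{M\cap{\kappa}:x\in M\prec_\Sigma V,\ M\cap{\kappa}\in{\kappa}\}$ contains a club $C\subs{\kappa}$. The crucial observation is that whenever $M\prec_\Sigma V$ with $f\in M$ and $M\cap{\kappa}={\delta}\in{\kappa}$, the argument from the weak form shows that $f({\delta})<{\delta}$ lies in $M\cap{\kappa}={\delta}$, so $f({\delta})\in M$. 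I want to use the stationarity of $S$ to select such a ${\delta}$ that additionally lies in $S$, forcing a single value ${\eta}$ to be taken on a stationary set.

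First I would fix $\Sigma$ large enough (to be determined at the end of the proof, as explained after Lemma \ref{lm:nw1}) and a parameter $x$ coding both $f$ and $S$, and apply Corollary \ref{cor:elem}(4) to obtain a club $C\subs S_x$. Since $S$ is stationary and $C$ is club, I can pick ${\delta}\in S\cap C$. By the definition of $S_x$ there is $M\prec_\Sigma V$ with $x\in M$ (so $f,S\in M$) and $M\cap{\kappa}={\delta}$. Because ${\delta}\in S$, the function $f$ is defined at ${\delta}$ and $f({\delta})<{\delta}$ since $f$ is regressive. Now ${\eta}:=f({\delta})\in{\delta}=M\cap{\kappa}\subs M$, and since $f,{\eta}\in M$, the set $T=f^{-1}\{{\eta}\}$ is definable from parameters in $M$, hence $T\in M$ by the absoluteness captured in Claim \ref{cl:def} (for $\Sigma$ large enough). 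Note ${\delta}\in T$ because $f({\delta})={\eta}$.

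The heart of the argument is to show $T$ is stationary rather than merely unbounded. I claim $T\cap C'$ is nonempty for every club $C'\subs{\kappa}$ with $C'\in M$: given such a $C'\in M$, the statement ``$C'$ is unbounded in ${\kappa}$'' is absolute, so ${\delta}=\sup(M\cap{\kappa})$ is a limit of elements of $C'$ below ${\delta}$ (each element of $C'\cap M$ lies below ${\delta}$, and they are cofinal in ${\delta}$ by elementarity), whence ${\delta}\in C'$ by closure; since ${\delta}\in T$ as well, $T\cap C'\ne\empt$. Thus $T$ meets every club that lies in $M$. To upgrade this to meeting \emph{every} club, I would instead argue directly: suppose toward a contradiction that $T$ is nonstationary, so there is a club $D\subs{\kappa}$ disjoint from $T$. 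The statement ``$T$ is nonstationary'' is itself expressible, and since $f,{\eta}\in M$ make $T\in M$, elementarity gives such a witnessing club $D\in M$; then the preceding paragraph shows ${\delta}\in D$, while ${\delta}\in T$, contradicting $D\cap T=\empt$.

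The main obstacle is precisely this last upgrade from unbounded to stationary: one must verify that the club $D$ witnessing nonstationarity of $T$ can be taken inside $M$, which requires that the predicate ``$T$ is stationary'' (equivalently the assertion $\exists D\,(D$ is club and $D\cap T=\empt)$) be among the finitely many formulas whose absoluteness between $M$ and $V$ we demand, together with the absoluteness of ``$D$ is club/unbounded.'' Once these formulas are placed in $\Sigma$, the reflection-and-Löwenheim–Skolem machinery of Corollary \ref{cor:elem}(4) supplies the club $C\subs S_x$, and the single choice ${\delta}\in S\cap C$ does all the work. The delicate point to get right is that $\sup(M\cap{\kappa})={\delta}$ genuinely lands in every club coded in $M$, which hinges on $M\cap{\kappa}$ being an initial segment ${\delta}\in{\kappa}$ (guaranteed by Corollary \ref{cor:elem}(4)) so that the cofinal-in-${\delta}$ elements of any $M$-coded club force closure at ${\delta}$.
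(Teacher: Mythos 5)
Your proposal is correct and follows essentially the same route as the paper's own proof: apply Corollary \ref{cor:elem}(4) together with the stationarity of $S$ to obtain $M\prec_\Sigma V$ with $f,S\in M$ and ${\delta}=M\cap{\kappa}\in S$, set ${\eta}=f({\delta})\in M$ so that $T=f^{-1}\{{\eta}\}\in M$, pull a club witnessing nonstationarity of $T$ into $M$ by elementarity, and derive a contradiction from the fact that $\sup(M\cap{\kappa})$ lies in every club belonging to $M$. The only cosmetic difference is that you verify this last fact directly (elements of $C'\cap M$ are cofinal in ${\delta}$, so closure gives ${\delta}\in C'$), whereas the paper argues by contradiction via absoluteness of ``$C\setm{\alpha}=\empt$''; the two arguments are interchangeable.
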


\begin{proof}
Let $\Sigma$ be a large enough finite set of formulas.
By Corollary \ref{cor:elem}(4) there is a set $M$ with $|M|<{\kappa}$
such that  $S,f\in M\prec_\Sigma V$ and ${\xi}=M\cap {\kappa}\in S$.

Let ${\eta}=f({\xi})$.
We show that  $T=f^{-1}\{{\eta}\}$ is stationary.
Clearly $T\in M$. If $T$ is not stationary then there is a closed
unbounded  set $C\in M$ such that $C\cap T=\empt$.

\smallskip

\noindent{\bf Claim:} {\em $\sup (M\cap {\kappa})\in C$ if  $C\in M$
is a closed unbounded subset of ${\kappa}$}.\\
Since $C$ is closed, if $\sup(M\cap {\kappa})\notin C$ then there is 
${\alpha}< \sup(M\cap {\kappa})$ such that $(C\setm {\alpha})\cap
M=\empt$. Then $M\models $ ``$C\setm {\alpha}=\empt$''.
Thus $V\models $ ``$C\setm {\alpha}=\empt$'', i.e. $C\subs
{\alpha}$, which contradicts the assumption that $C$ is unbounded.

\smallskip


\smallskip

So by the claim ${\xi}\in C\cap T$. Contradiction.
\end{proof}

\section{Decomposition theorems}\label{sc:nw}
In the previous section we proved theorems which claimed that 
 {\em ``Given a large enough structure
$\mc A$ we can find a large enough nice substructure of $\mc A$.''}
In this section we prove results which have a different flavor:
{\em Every large structure having certain properties can be
  partitioned into  ``nice'' small pieces.}

In \cite{nw} the following statements were proved:
\begin{theorem}[Nash-Williams]\label{tm:nw}
 $G$ is decomposable into cycles if and only if
it has no odd cut.
\end{theorem}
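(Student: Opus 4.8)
The plan is to prove the substantial direction --- every NW-graph decomposes into cycles --- by induction on $\kappa=|V(G)|$, the converse being an easy parity count.

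\emph{The converse.} Given a decomposition $\mc C$ of $G$ into (finite) cycles and any finite cut $F=E(G)\cap[A,\bar A]$, each cycle crosses the partition $\{A,\bar A\}$ evenly, so $|E(C)\cap F|$ is even for every $C\in\mc C$; since $\mc C$ partitions $E(G)$ and $F$ is finite, $|F|=\sum_{C\in\mc C}|E(C)\cap F|$ is even, so $G$ has no odd cut.

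\emph{The induction.} For $\kappa\le\omega$ this is the classical case (trivial for finite graphs, where the condition reduces to all degrees being even, and standard for countable graphs), so I would assume $\kappa>\omega$ and that the result holds for all NW-graphs on fewer than $\kappa$ vertices. It then suffices to partition $E(G)$ into edge sets of NW-graphs, each on $<\kappa$ vertices: the inductive hypothesis decomposes each piece into cycles, and the union of these decompositions is a cycle-decomposition of $G$. To produce such a partition I would fix a large enough finite $\Sigma$ and, following the strategy laid out after Lemma \ref{lm:nw1}, build a continuous increasing chain $\<M_\nu:\nu<\kappa\>$ of $\Sigma$-elementary submodels with $G\in M_0$, $|M_\nu|\le|\nu|+\omega<\kappa$, $M_\nu\in M_{\nu+1}$, and $V(G)\subs\bigcup_{\nu<\kappa}M_\nu$ (forcing the $\nu$-th vertex into $M_{\nu+1}$). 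Setting $M_{-1}=\empt$, the pieces would be
\begin{equation}\notag
H_\nu=(G\ssetm M_\nu)\rrestriction{M_{\nu+1}}\qquad(-1\le\nu<\kappa).
\end{equation}
Besides Lemma \ref{lm:nw1} I would use the companion statement promised in the Remark: that $G\ssetm M$ is again NW whenever $G\in M\prec_\Sigma V$. Granting it, $G\ssetm M_\nu$ is NW and, since $G,M_\nu\in M_{\nu+1}$ and $\ssetm$ is an absolute operation, belongs to $M_{\nu+1}$; hence $H_\nu$ is NW by Lemma \ref{lm:nw1}, with $V(H_\nu)\subs M_{\nu+1}\cap V(G)$ of size $<\kappa$. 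That the $E(H_\nu)$ partition $E(G)$ follows by assigning each $e=\{x,y\}$ to the least stage $\beta$ with $x,y\in M_\beta$: continuity makes $\beta$ a successor (or $0$), and Claim \ref{cl:omegasub} (giving $\{x,y\}\in M_\mu\Leftrightarrow x,y\in M_\mu$) shows $e$ lands in exactly one $H_\nu$.

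\emph{The hard part} is the companion lemma, i.e.\ that $G\ssetm M$ is NW; a bare parity argument fails. Given a finite odd bond $F$ of $G\ssetm M$, Proposition \ref{pr:bond} (as $G$ is NW, $F$ is not a bond of $G$) yields a component $D$ of $G\ssetm F$ with $F\subs\br D;2;$, while Fact \ref{f:bond} gives distinct components $C_1,C_2$ of $(G\ssetm M)\ssetm F$ with $F=(E(G)\setm M)\cap[C_1,C_2]$. For $K=E(G)\cap[C_1,V(G)\setm C_1]$ the edges outside $M$ are exactly $F$, whereas $K\cap M$ is a cut of the NW-graph $G\rrestriction M$; when $K$ is finite this closes the argument modulo $2$. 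The genuine difficulty is the case where the $M$-part $K\cap M$ is infinite, so that parity is silent: there one must reflect the fact that the endpoints of an edge of $F$ are joined in $G\ssetm F$ into a path witnessing this inside $M$, which is delicate precisely because --- in contrast with Lemma \ref{lm:nw1} --- those endpoints need not lie in $M$. I expect this infinite case to be the crux of the whole theorem.
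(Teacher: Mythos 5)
Your architecture is the paper's: induction on $|V(G)|$, a continuous increasing chain of $\Sigma$-elementary submodels, pieces $(G\ssetm M_\nu)\rrestriction{M_{\nu+1}}$, Lemma \ref{lm:nw1} together with a companion statement for $G\ssetm M$, and the inductive hypothesis applied to each piece (your $M_{-1}=\empt$ device even repairs a small indexing slip: the paper's pieces $G_\alpha$, $\alpha<\kappa$, miss the edges lying inside $M_0$, which your extra piece $G\rrestriction{M_0}$ catches). But the proof is not complete, and the gap is the one you flag yourself: the companion lemma --- that $G\ssetm M$ is NW --- is never proved, and it is the load-bearing step; without it nothing shows the pieces are NW, so the induction cannot start. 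Moreover, the parity route you sketch cannot be completed: the cut $K=E(G)\cap[C_1,V(G)\setm C_1]$ has no reason to be finite (its $M$-part is infinite as soon as some vertex of $C_1\cap M$ has infinitely many neighbours in $M\setm C_1$ inside $G\rrestriction M$), and for an infinite cut $K$ the NW hypothesis on $G$ carries no information whatsoever, so counting modulo $2$ is unavailable in principle, not merely technically.

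The paper's solution (Lemma \ref{lm:nw2}, deduced from Lemma \ref{lm:GminuszM}) abandons parity for this step and proves a separation statement instead: if $G\in M$, $|M|\subs M$, $F\subs E(G\ssetm M)$ with $|F|\le|M|$, ${\gamma}_{G\ssetm M}(x,y)>0$, and $F$ separates $x$ and $y$ in $G\ssetm M$, then $F$ already separates $x$ and $y$ in $G$. The proof runs by contradiction: an $F$-avoiding $x$--$y$ path $Q$ in $G$ must use edges of $M$; let $x',y'\in M$ be its first and last vertices lying in $M$. Then ${\gamma}_{G\ssetm M}(x',y')>0$, and reflection forces ${\gamma}_G(x',y')>|M|$, since otherwise elementarity produces a separating vertex set $A\in M$ of size ${\gamma}_G(x',y')\le|M|$, whence $A\subs M$ by Claim \ref{cl:card}, so $M$ itself would separate $x'$ from $y'$ in $G$, contradicting ${\gamma}_{G\ssetm M}(x',y')>0$. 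The weak Erd\H{o}s--Menger theorem then gives more than $|M\cup F|$ edge-disjoint $x'$--$y'$ paths in $G$, so one of them avoids $M\cup F$, and splicing it into $Q$ contradicts the assumed separation in $G\ssetm M$. Granting this, an odd bond $F$ of $G\ssetm M$ separates the endpoints $c_1,c_2$ of any of its edges in $G$, while Proposition \ref{pr:bond} (applicable precisely because an odd set cannot be a bond of the NW graph $G$) puts $c_1$ and $c_2$ into a single component of $G\ssetm F$ --- a contradiction reached without measuring any infinite cut. This Menger-plus-reflection argument is the idea missing from your proposal; it is also exactly what makes precise the ``reflect a path into $M$'' step you gesture at, and note that it requires $|M_\nu|\subs M_\nu$ for the models in your chain, a closure property you should (and easily can) build into the construction.
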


We give a new proof which illustrates how one can use ``{\em chains of
elementary submodels}''.
To do so we need two lemmas. The first one was proved in 
 section \ref{sc:first}:
\begin{qlemma}
If $G=\<{\kkappa},E\>$ is an NW-graph,
$G\in M\prec_\Sigma  V$ for some large enough finite set $\Sigma$ of
formulas,
then $G\rrestriction M$ is also an NW-graph.
\end{qlemma}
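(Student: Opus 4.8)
The plan is to argue by contradiction, reducing everything to the existence of a single odd bond in $G\rrestriction M$ and then manufacturing a forbidden detour path that lives entirely inside $M$. Suppose $G\rrestriction M$ has an odd cut. Since every cut is a disjoint union of bonds, the odd parity forces one of those bonds to have odd size, so I may fix an odd bond $F$ of $G\rrestriction M$. Because $G$ is NW it has no odd cut, and a bond is in particular a cut; hence $F$ cannot be a bond of $G$. Thus Proposition \ref{pr:bond}, applied to the subgraph $H=G\rrestriction M$ of $G$, yields a connected component $D$ of $G\ssetm F$ with $F\subs \br D;2;$.

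Next I fix any edge $bc\in F$. On one hand, by Fact \ref{f:bond} the bond $F$ of $G\rrestriction M$ splits $(G\rrestriction M)\ssetm F$ into two distinct components $C_1$ and $C_2$ with $b\in C_1$ and $c\in C_2$, so no $F$-avoiding $b$--$c$ path can exist inside $G\rrestriction M$. On the other hand, since $b,c\in D$ and $D$ is connected in $G\ssetm F$, there is an $F$-avoiding path $bw_1\cdots w_{m-1}c$ in the big graph $G$. The whole point is to pull this path down into $M$: that would produce an $F$-avoiding $b$--$c$ path entirely inside $G\rrestriction M$, contradicting the previous sentence.

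To carry out the pull-down I first check that all relevant data are elements of $M$. Each edge of $F$ is a two-element subset of $M\cap \kkappa$, hence belongs to $M$ by $\br M;<{\omega};\subs M$, and $F$ itself, being a finite subset of $M$, lies in $M$ by the same closure; likewise $b,c\in M$, and using ${\omega}\cup\{{\omega}\}\subs M$ we get the length $m\in M$. These are exactly the closure facts of Claim \ref{cl:omega}, so I fold the corresponding finitely many formulas into $\Sigma$. I then encode ``there is an $F$-avoiding $b$--$c$ path of length $m$'' by a formula $\exists f\,{\varphi}_1(G,m,f,b,c,F)$ asserting that $f$ is a function with domain $m$, range in $V(G)$, endpoints $f(0)=b$ and $f(m-1)=c$, and consecutive values joined by edges outside $F$. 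Since this sentence holds in $V$ with all parameters in $M$, absoluteness $M\prec_\Sigma V$ gives a witness $f\in M$ for which ${\varphi}_1$ also holds in $V$; and since $M$ is closed under the evaluation operation $(g,y)\mapsto g(y)$, every vertex $f(i)$ lies in $M$. Hence $f$ traces an $F$-avoiding $b$--$c$ path inside $G\rrestriction M$, the desired contradiction.

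The main obstacle here is bookkeeping rather than depth: I must ensure that $\Sigma$ contains enough formulas to guarantee simultaneously the closure properties of $M$ packaged in Claim \ref{cl:omega} (finite subsets, ${\omega}$, and function evaluation) and the absoluteness of the two path formulas $\exists f\,{\varphi}_1$ and ${\varphi}_1$. Since only finitely many formulas are involved, a ``large enough finite $\Sigma$'' suffices, and no appeal to full elementarity $M\prec V$ is needed.
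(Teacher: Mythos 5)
Your proof is correct and follows essentially the same route as the paper's: reduce to an odd bond $F$ of $G\rrestriction M$, use Proposition \ref{pr:bond} to get an $F$-avoiding $b$--$c$ path in $G$, verify via the closure properties of Claim \ref{cl:omega} that $F$, $b$, $c$, $m$ lie in $M$, and then pull the path down into $M$ by absoluteness of the formula ${\varphi}_1$ and its existential closure. The only cosmetic difference is that you make explicit (via Fact \ref{f:bond}) why no $F$-avoiding $b$--$c$ path can exist in $G\rrestriction M$, a step the paper leaves implicit.
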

The second one is  the following statement.
\begin{lemma}\label{lm:nw2}
If $G=\<{\kkappa},E\>$ is an NW-graph,
$G\in M\prec_\Sigma  V$ for some large enough finite set $\Sigma$ of
formulas,
then $G\ssetm M$ is also an NW-graph.
\end{lemma}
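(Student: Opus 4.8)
The plan is to argue by contradiction, mirroring the proof of Lemma \ref{lm:nw1} but reversing the roles of the edges inside and outside $M$. Assume $H=G\ssetm M$ has an odd cut; since every cut is a disjoint union of bonds, an odd cut must contain a bond of odd size, so $H$ has an odd \emph{bond} $F$. By Fact \ref{f:bond} there are distinct components $C_1,C_2$ of $H\ssetm F$ with $F=E(H)\cap[C_1,C_2]$, and since every $H$-edge leaving $C_1$ lies in $F$ one gets $F=E(H)\cap[C_1,\overline{C_1}]$, i.e. $F$ is the cut of $H$ determined by $C_1$. Two closure facts drive the argument. First, since $M$ is closed under pairing and union (Claim \ref{cl:omega}), an edge $\{x,y\}$ lies in $M$ iff $x,y\in M$; hence every edge of $H=\<V(G),E(G)\setm M\>$ has an endpoint in $V(G)\setm M$, so $M\cap V(G)$ is independent in $H$, and $F\not\subs M$. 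Second (Claim \ref{cl:omegasub}), a finite set lying in $M$ is a subset of $M$; thus it suffices to prove $V(F)\subs M$, because then each edge of $F$ would have both endpoints in $M$ and hence lie in $M$, contradicting $F\subs E(G)\setm M$.

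Next I localise the picture. If $F$ were a bond of $G$ it would be an odd cut of $G$, contradicting that $G$ is NW; so $F$ is not a bond of $G$, and Proposition \ref{pr:bond} yields a component $D$ of $G\ssetm F$ with $F\subs\br D;2;$. Because $F\subs\br D;2;$, no edge of $G$ leaves $D$, so $D$ is an entire connected component of $G$; consequently $G\rrestriction D$ is again NW and $E(G)\cap[A,\overline{A}]=E(G\rrestriction D)\cap[A,\overline{A}]$ for every $A\subs D$. If $D\cap M=\empt$, then $D$ carries no edge of $M$, so $G\rrestriction D=H\rrestriction D$ and $F$ equals the cut of $G$ determined by $C_1$, a finite odd cut of $G$ -- a contradiction; hence $D\cap M\ne\empt$. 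Fixing $s\in D\cap M$, the component $D$ of $G$ is definable from $G$ and $s$, so $D\in M$ by Claim \ref{cl:def}, whence $G\rrestriction D\in M$. Replacing $G$ by $G\rrestriction D$, I may therefore assume that $G$ is connected, $G\in M$, $G\ssetm F$ is connected, and $C_1,C_2$ are components of $H\ssetm F$.

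It remains to force $V(F)\subs M$. Because $G\ssetm F$ is connected while $C_1$ and $C_2$ are separated in $H\ssetm F$, every path in $G\ssetm F$ joining the two sides must use an edge of $M$, and such an edge has both endpoints in $M$; hence $C_1\cap M$ and $C_2\cap M$ are nonempty. The heart of the matter is to upgrade this to membership of the actual endpoints of $F$: I would fix an edge $bc\in F$ with, say, $c\in V(G)\setm M$, and show that $c$ is definable from $G$ together with finitely many parameters already in $M$, whence $c\in M$ by Claim \ref{cl:def} -- the desired contradiction. To produce those parameters I would invoke the weak Erd\H os--Menger Theorem: the vertices of $C_2\cap M$ are joined to $c$ within the connected graph $G\ssetm F$, and reflecting the resulting finite system of edge-disjoint paths into $M$ (all of whose vertices and edges then lie in $M$, finite members of $M$ being subsets of $M$) should pin $c$ down over $M$-data. \textbf{The main obstacle} is precisely this last step: converting the combinatorial information ``$c$ is joined to $M$-vertices inside $G\ssetm F$'' into genuine definability of $c$ over $M$. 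This is exactly where the hypothesis $M\prec_\Sigma V$ for a \emph{large enough} $\Sigma$ must be used, and where the way the finite odd bond $F$ sits inside the connected graph $G\ssetm F$ has to be analysed carefully; everything else reduces to bookkeeping with the closure properties of $M$ recorded in Claims \ref{cl:omega}--\ref{cl:omegasub}.
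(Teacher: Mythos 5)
Your reduction steps are all correct as far as they go: passing from an odd cut of $G\ssetm M$ to an odd bond $F$, noting that $F$ cannot be a bond of $G$ (else $G$ would have an odd cut), invoking Proposition \ref{pr:bond} to get a component $D\supseteq C_1\cup C_2$ of $G\ssetm F$ with $F\subs \br D;2;$, showing $D\cap M\ne\empt$ and hence $D\in M$ via Claim \ref{cl:def}, and producing $c_1\in C_1\cap M$ and $c_2\in C_2\cap M$. But the step you yourself flag as ``the main obstacle'' is precisely the heart of the proof, and it is missing; worse, the route you propose for it cannot be made to work. Claim \ref{cl:def} places an object in $M$ only when it is the \emph{unique} witness of a formula whose parameters lie in $M$. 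Elementarity produces witnesses \emph{inside} $M$ for existential statements with parameters in $M$; it never forces a specific object lying outside $M$ into $M$, and an endpoint $c\in V(G)\setm M$ of an edge of $F$ is in general not the unique vertex with any $M$-expressible property (many vertices of $V(G)\setm M$ may be joined to $C_2\cap M$ in exactly the same way, so there is no uniqueness to exploit). Note also that, as you observed, every edge of $G\ssetm M$ has an endpoint outside $M$, so your target $V(F)\subs M$ is false outright for \emph{every} nonempty $F\subs E(G\ssetm M)$; it can only be ``derived'' by an argument that genuinely uses the contradictory hypothesis that $F$ is an odd bond, and definability bookkeeping does not do that.

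What actually fills the gap is a cardinality argument about edge connectivity, not definability: this is the paper's Lemma \ref{lm:GminuszM}, from which it deduces the present lemma in a few lines. The key claim (which needs the hypothesis $|M|\subs M$, left implicit both in your sketch and in the bare statement, but available for the chains of submodels used in the applications) is: if $x,y\in M$ and ${\gamma}_{G\ssetm M}(x,y)>0$, then ${\gamma}_G(x,y)>|M|$. Indeed, otherwise by elementarity there is $A\in M$, a minimum-size edge set separating $x$ from $y$ in $G$; since $|A|\le |M|\subs M$, Claim \ref{cl:card} gives $A\subs M$, so deleting the edges in $M$ already separates $x$ from $y$ in $G$, contradicting ${\gamma}_{G\ssetm M}(x,y)>0$. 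Granted the claim, the weak Erd\H{o}s--Menger theorem yields more than $|M\cup F|$ edge-disjoint $x$--$y$ paths in $G$, so one of them avoids $M\cup F$, i.e.\ $F$ does not separate $x$ from $y$ in $G\ssetm M$. Applying this with $x=c_1$ and $y=c_2$ --- which are joined in $G\ssetm M$ through $C_1$, an edge of $F$, and $C_2$, yet are separated by $F$ in $G\ssetm M$ because they lie in distinct components of $(G\ssetm M)\ssetm F$ --- gives an immediate contradiction, with no need to locate $V(F)$ relative to $M$ at all. So your configuration is one lemma away from a complete proof, but that lemma is where elementarity really enters, and it is of a different nature (counting connectivity against $|M|$) from the definability step you propose.
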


Lemma \ref{lm:nw2} above follows easily from the next  one.

\begin{lemma}\label{lm:GminuszM}
Assume that  $M\prec_\Sigma V$   with $|M|\subs M$ for some
large enough  finite set $\Sigma$ of
formulas. If $G
\in M$ is a graph,
$x\ne y\in V(G)$ and  $F\subs E(G\ssetm M)$,
such that
\begin{equation}
\text{$|F|\le |M|$, ${\gamma}_{G\ssetm M }(x,y)>0$ and  $F$ separates
$x$ and $y$ in $G\ssetm M$}
\end{equation}
then
\begin{equation}
\text{$F$ separates $x$ and $y$
in $G$.}
\end{equation}
\end{lemma}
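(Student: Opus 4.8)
We have $M \prec_\Sigma V$ with $|M| \subseteq M$, a graph $G \in M$, vertices $x \neq y$, and a set $F \subseteq E(G \setminus M)$ (edges NOT in $M$). We know:
- $|F| \leq |M|$
- $\gamma_{G\setminus M}(x,y) > 0$ (so $x,y$ connected in $G\setminus M$... wait, no)

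Let me re-read. $\gamma_{G\setminus M}(x,y) > 0$ means the minimum cut separating $x,y$ in $G\setminus M$ has positive size, so $x$ and $y$ ARE connected in $G \setminus M$.

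$F$ separates $x$ and $y$ in $G \setminus M$: removing $F$ disconnects $x$ from $y$ in $G \setminus M$.

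Conclusion: $F$ separates $x,y$ in $G$ (the full graph).

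**Hmm, this is subtle.** In $G$, there are MORE edges (those in $M$). So why would removing just $F$ (which are non-$M$ edges) disconnect $x$ from $y$ in all of $G$? The edges in $M$ could provide alternate paths.

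The point must be: any path from $x$ to $y$ in $G$ that avoids $F$ must use some $M$-edges. If it uses an $M$-edge $\{a,b\}$, then since $\{a,b\} \in M$ and $M$ is closed nicely, $a,b \in M$. So the path enters $M$.

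**Key idea:** A path avoiding $F$ in $G$ from $x$ to $y$: consider its edges. Edges not in $M$ are in $E(G\setminus M) \setminus F$. Edges in $M$ connect vertices in $M$.

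Suppose for contradiction $x,y$ are connected in $G \setminus F$. Take such a path $P$. The path alternates between "excursions inside $M$" (sequences of $M$-edges, with all vertices in $M$) and edges outside $M$ (in $E(G\setminus M)\setminus F$).

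**The contraction/reduction idea:** We want to show $x,y$ connected in $(G\setminus M)\setminus F$, contradicting that $F$ separates them in $G\setminus M$. But wait—$x,y$ might not even be in $M$. Actually the vertices of the path that lie in $M$ form "contractible" blobs: any two vertices of $G$ that are in $M$ and connected by $M$-edges are also connected. But we need them connected in $G\setminus M$...

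**Better idea using elementary submodel + Erdős–Menger.** Since $\gamma_{G\setminus M}(x,y) > 0$, by the weak Erdős–Menger theorem there are that many edge-disjoint paths. The strategy: if $x,y$ were connected in $G \setminus F$, take a shortest path $P$. Whenever $P$ uses an edge $e \in M$, the endpoints of $e$ lie in $M$. I want to "reroute" through $G\setminus M$.

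**The real mechanism.** For each edge $e = \{a,b\} \in M$ used by $P$: both $a,b \in M$. Now $a,b \in V(G) \cap M = V(G \setminus M) \cap M$ (vertices are the same). I claim $a,b$ are connected in $(G\setminus M)\setminus F$ via a path inside $M$. Why? The statement "$a$ and $b$ are connected in $G \setminus (M\text{-edges-used})$"... this needs the absoluteness. The connectivity relation should be absolute or witnessed in $M$.

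**Plan for the proof:**

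The plan is to argue by contradiction. Suppose $F$ does \emph{not} separate $x$ and $y$ in $G$; then there is a finite path $P = v_0 v_1 \cdots v_k$ in $G$ from $x = v_0$ to $y = v_k$ that avoids $F$. The goal is to transform $P$ into a path in $G \setminus M$ avoiding $F$, contradicting the hypothesis that $F$ separates $x$ and $y$ in $G \setminus M$.

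First I would classify the edges of $P$. Each edge $\{v_i, v_{i+1}\}$ of $P$ is either (a) an edge of $E(G \setminus M)$, hence in $E(G\setminus M)\setminus F$ since $P$ avoids $F$, or (b) an element of $M$. For a type-(b) edge $e = \{v_i, v_{i+1}\} \in M$, since $M \prec_{\Sigma_0} V$ with $\Sigma \supseteq \Sigma_0$, Claim \ref{cl:omega} gives $v_i, v_{i+1} \in M$ (the two-element set is finite and its elements lie in $M$). So every type-(b) edge has both endpoints in $M \cap V(G)$.

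The crux is to replace each type-(b) segment by an equivalent route lying in $G\setminus M$. For consecutive vertices $v_i, v_{i+1}$ joined by an $M$-edge, both are in $M$, and I would use the fact that the connectivity of two vertices is witnessed inside $M$: the statement ``there is a finite path from $v_i$ to $v_{i+1}$ in $G\setminus M$ avoiding $F$'' is a statement I would like to reflect. However $F \notin M$ in general (its elements are outside $M$), so I cannot directly use $F$ as a parameter. This is where $|F| \le |M|$ and $|M| \subseteq M$ enter: $F$ is a set of size $\le |M|$ of edges outside $M$, and using the Erdős–Menger theorem together with the edge-connectivity $\gamma_{G\setminus M}(x,y) > 0$, I would build enough edge-disjoint paths in $G\setminus M$ to overwhelm $F$.

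\textbf{The main obstacle.} The genuinely hard part is handling the parameter $F$, which lives \emph{outside} $M$, so none of the earlier reflection machinery applies to $F$ directly. The resolution I expect is the following: because $F \subseteq E(G\setminus M)$ and $G \in M$, each edge of $F$ joins two vertices at least one of which lies \emph{outside} $M$; meanwhile every type-(b) detour of $P$ stays entirely \emph{inside} $M$. Thus no edge of $F$ can appear on any all-$M$ path, so replacing each type-(b) edge $\{v_i,v_{i+1}\}$ by \emph{any} connecting route inside $M \cap V(G)$ automatically avoids $F$. It therefore suffices to show that two vertices of $M$ joined by an edge of $G$ are joined by a path in $G\setminus M$ lying in $M$; but an edge $\{v_i,v_{i+1}\} \in M$ with endpoints in $M$ is itself, as a set of two vertices both in $M$, an edge of $G[M\cap V(G)] = G\rrestriction M$, hence an edge of $G\setminus M$ only if it is not in $M$ --- so the correct replacement is simply to keep type-(a) edges and observe that type-(b) edges, having both endpoints in $M$, contribute connectivity already present in $G\rrestriction M \subseteq G \setminus M$. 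Concatenating the type-(a) edges of $P$ with these in-$M$ connections yields a walk from $x$ to $y$ in $G \setminus M$ avoiding $F$, the desired contradiction. I would verify carefully that the endpoints where $P$ transitions between type-(a) and type-(b) edges are consistently in $M$, so that the concatenation is well-formed.
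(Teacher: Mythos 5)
Your proposal collapses at the final ``resolution'' step, and the error is the assertion that $G\rrestriction M\subs G\ssetm M$. In fact these two graphs are \emph{edge-disjoint}: for an edge $e=\{a,b\}\in E(G)$ one has $e\in M$ if and only if both $a,b\in M$ (if $a,b\in M$ then $\{a,b\}\in M$ because $\br M;<{\omega};\subs M$ by Claim \ref{cl:omega}; conversely a finite set belonging to $M$ has its elements in $M$). Hence $E(G\rrestriction M)=E(G)\cap \br M;2;=E(G)\cap M$, which is precisely the set of edges deleted when forming $G\ssetm M$. So every type-(b) edge, and every ``connecting route inside $M\cap V(G)$'' you propose as its replacement, consists entirely of edges that are \emph{absent} from $G\ssetm M$; your observation that such routes automatically avoid $F$ is true but useless, because the concatenated walk you build is a walk in $G$, not in $G\ssetm M$, and no contradiction results. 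For the same reason your intermediate goal --- a path in $G\ssetm M$ ``lying in $M$'' --- is unattainable: any such path with at least one edge would use an edge of $\br M;2;\cap E(G)\subs M$. Moreover, the edge-by-edge rerouting strategy cannot be repaired even with elementarity, since for an arbitrary type-(b) edge $\{v_i,v_{i+1}\}$ you have no information that $v_i$ and $v_{i+1}$ are connected in $G\ssetm M$ at all, and any reflection argument needs exactly that.

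Rerouting connectivity between $M$-vertices through edges \emph{outside} $M$ is the actual content of the lemma, and the paper's proof supplies the two ingredients you gestured at in your ``plan'' but then abandoned. Given the hypothetical $F$-avoiding path $Q=q_0\dots q_m$ in $G$, take $x'=q_{j_x}$ and $y'=q_{j_y}$ where $j_x$ is the \emph{first} and $j_y$ the \emph{last} index with $q_j\in M$; the two tails of $Q$ beyond these indices have all vertices outside $M$, hence use no $M$-edges, so together with the witness path $P$ for ${\gamma}_{G\ssetm M}(x,y)>0$ they form a walk showing ${\gamma}_{G\ssetm M}(x',y')>0$ --- this is where the hypothesis you never used enters. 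Next, elementarity plus $|M|\subs M$ gives ${\gamma}_G(x',y')>|M|$: otherwise, since $x',y',G\in M$, there is a separating set $A\in M$ of size ${\gamma}_G(x',y')\le |M|$, and then $A\subs M$ by Claim \ref{cl:card}, so the edges of $M$ already separate $x'$ from $y'$, i.e.\ ${\gamma}_{G\ssetm M}(x',y')=0$, a contradiction. Finally, the weak Erd\H os--Menger theorem yields ${\gamma}_G(x',y')>|M|=|M\cup F|$ edge-disjoint $x'$--$y'$ paths in $G$, so some path $R$ avoids $M\cup F$; splicing $R$ between the two tails of $Q$ gives a path from $x$ to $y$ in $G\ssetm M$ avoiding $F$, the desired contradiction. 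Note that this last step is where $|F|\le|M|$ is used --- another hypothesis your final argument never touches, which is itself a warning sign that the ``trivial'' resolution could not be right.
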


\begin{proof}[Proof of Lemma \ref{lm:nw2} from Lemma \ref{lm:GminuszM}]
Assume on the contrary that $G\ssetm M$ has an odd cut $F$.
Since any cut is the disjoint union of bonds we can assume that
$F$ is a bond.

Pick $c_1c_2\in F$. Then clearly ${\gamma}_{G\ssetm M}(c_1,c_2)>0$.
Moreover $F$ separates $c_1$ and $c_2$ in $G\ssetm M$, so 
$F$ separates them in $G$ by  Lemma \ref{lm:GminuszM},
i.e. $c_1$ and $c_2$ are in different connected components of $G\ssetm F$ 

However $F$ can not be a bond in $G$, so by Proposition 
\ref{pr:bond} there is a connected component
$D$ of $G\ssetm F$ such that $F\subs \br D;2;$.
i.e. $c_1$ and $c_2$ are in the same connected component of $G\ssetm F$.
This contradiction proves the lemma.
\end{proof}

\begin{proof}[Proof of Lemma \ref{lm:GminuszM}]
Assume that $G$, $M$, $x$, $y$ and
$F$ form a counterexample.

\begin{figure}[h]
\psfrag*{M}{$M$}
\psfrag*{F}{$F$}
\psfrag*{M}{$M$}
\psfrag*{x}{$x$}
\psfrag*{y}{$y$}
\psfrag*{Q}{$Q$}
\psfrag*{qx}{$Q_x$}
\psfrag*{qy}{$Q_y$}
\psfrag*{P}{$P$}
\psfrag*{T}{$T$}
\psfrag*{R}{$R$}
\psfrag*{G}{$G$}
\psfrag*{qjx}{$q_{j_x}$}
\psfrag*{qjy}{$q_{j_y}$}
\includegraphics[height=5cm]{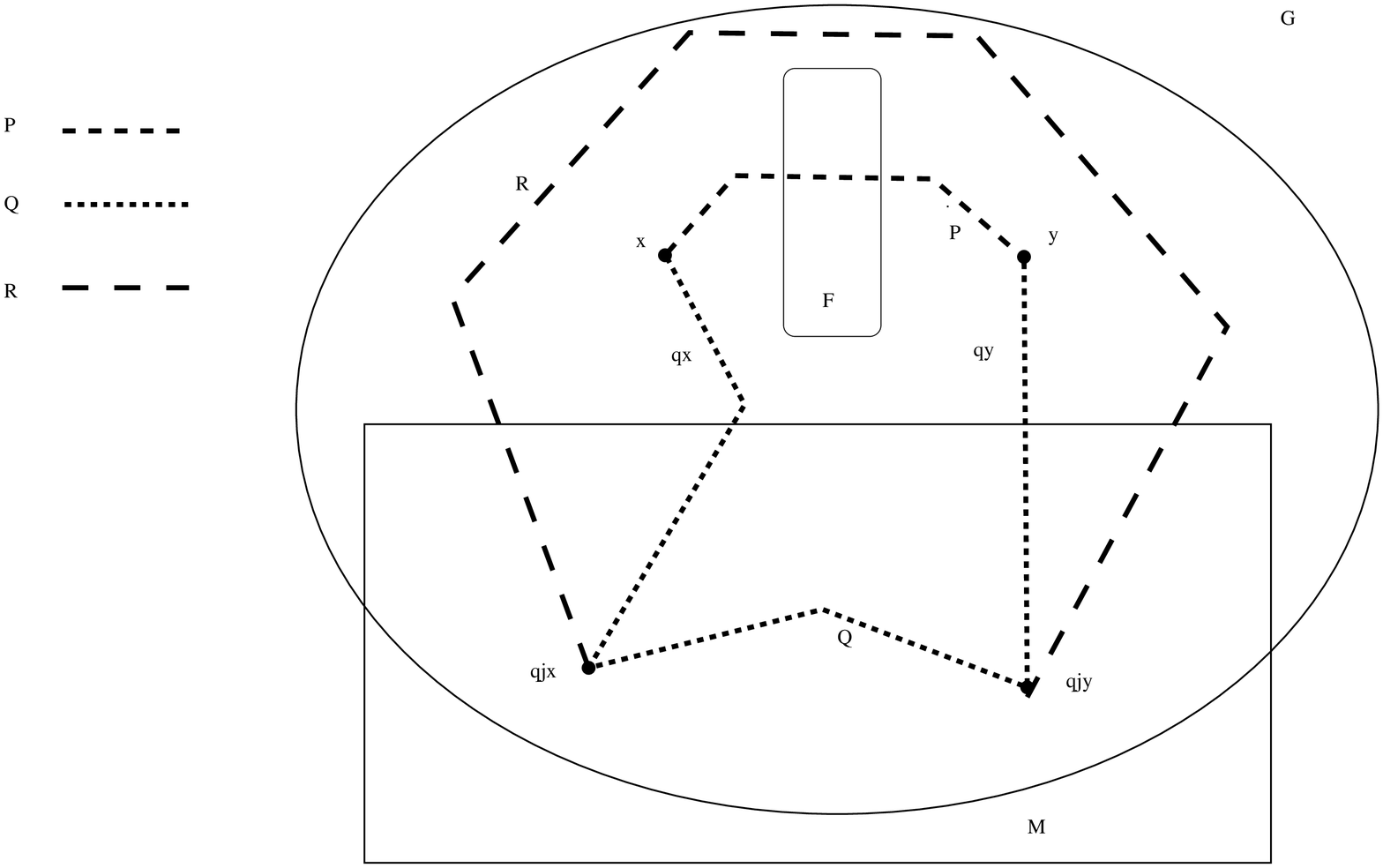}
\caption{}
\end{figure}

Fix a path $P=p_0p_1\dots p_n$ from $x$ to $y$ in $G\ssetm M$ which
witnesses that ${\gamma}_{G\ssetm M}(x,y)>0$, i.e.
 $p_0=x$, $p_n=y$ and $p_ip_{i+1}\in E(G)\setm M$ for $i<n$.

We assumed that $F$ does not separate $x$ and $y$ in $G$,
so there is a path $Q=q_0\dots q_m$ from $x$ to $y$
witnessing this fact, i.e. $q_0=x$, $q_m=y$ and 
$q_jq_{j+1}\in E(G)\setm F$   for $j<m$. Since
$F$ separates $x$ and $y$ in $G\ssetm M$ there is at least
one $j^*<m$ such that $q_{j^*}q_{j^*+1}\in M$.

Let $j_x=\min \{j:q_j\in M\}$ and $j_y=\max\{j:g_j\in M\}$.
Since $j_x\le j^*$ and $j_y\ge j^*+1$ we have $j_x<j_y$.
Let $x'=q_{j_x}$ and $y'=q_{j_y}$.
Let $Q_x=q_{j_x}q_{j_x-1}\dots q_1q_0$
and $Q_y=q_mq_{m-1}\dots q_{j_y}$.
Then $Q_xPQ_y$ is a walk from $x'$ to $y'$ in $G\ssetm M$.
Hence ${\gamma}_{G\ssetm M}(x',y')>0$. 

\smallskip

\noindent{\bf Claim:} {\em ${\gamma}_{G}(x',y')>|M|$.}

Indeed, assume that 
${\lambda}={\gamma}_{G}(x',y')\le |M|$.
Since $M\prec_\Sigma V$ and 
$x',y'\in M$  there is $A\in M\cap \br V(G);{\lambda};$ 
such that $A$ separates $x'$ and $y'$ in $G$.
Since $|A|={\lambda}\subs M$ we have $A\subs M$.
So $M$ separates $x'$ and $y'$, i.e. ${\gamma}_{G\ssetm M}(x',y')=0$.
This contradiction proves the claim.

\medskip

By the weak Erd\H os-Menger Theorem  there are 
${\gamma}_G(x',y')$ many edge disjoint paths between 
$x'$ and $y'$ in $G$.
Since $|M\cup F|=|M|< {\gamma}_{G}(x',y')$ there is a path $R=r_0\dots r_k$ from $x'$ to $y'$ which avoids
$M\cup F$.
Then $Q_x^{-1}RQ_y^{-1}$ is walk from $x$ to $y$ in $G\ssetm M$
which avoids $F$. Contradiction.
\end{proof}

\begin{proof}[Proof of theorem \ref{tm:nw}]
We prove the theorem by induction on $|V(G)|$.

If $G$ is countably  infinite then
for each $e\in E(G)$ there is a cycle $C$ in $G$
with $e\in E(C)$ because $e$ is not a cut in $G$. 
Moreover,  $G\ssetm C$ is also an NW-graph, i.e.
it does not have odd cuts.  Using this observation
we can construct a sequence
$\{C_i:i<{\omega}\}$ of edge disjoint cycles in $G$
with $E(G)=\cup\{E(C_i):i<{\omega}\}$.

Assume now that ${\kappa}=V(G)>{\omega}$
and we have proved the statement for graphs of cardinality $<{\kappa}$.

Let $\Sigma$ be a large enough finite set of formulas.
By the Reflection Principle \ref{pr:refl}
there is a  cardinal ${\lambda}$ such that
  $V_{\lambda}\prec_\Sigma V$
and $\br V_{\lambda};{\kappa};\subs V_{\lambda}$.
Then $G\in V_{\lambda}$.

We will construct a sequence
$\<M_{\alpha}:{\alpha}<{\kappa}\>\subs V_{\lambda}$
of elementary submodels of
$V_{\lambda}$ with
\begin{equation}
\tag{$*_{\alpha}$}
\text{$|M_{\alpha}|= {\omega}+|{\alpha}|$, ${\alpha}\subs M_{\alpha}$
and $M_{\alpha}\in M_{{\alpha}+1}$}
\end{equation}
 as follows:
\begin{enumerate}[(i)]
\item Let $M_0$ be a countable elementary submodel
of $V_{\lambda}$ with $G\in M_0$.
\item if ${\beta}<{\kappa}$ is a limit then let
$M_{\beta}=\cup\{M_{\alpha}:{\alpha}<{\beta}\}$.
Since $|M_{\beta}|\le {\omega}+|{\beta}|<{\kappa}$ and
$M_{\beta}\subs V_{\lambda}$ we have $M_{\beta}\in V_{\lambda}$.
\item If ${\beta}={\alpha}+1$ then
$|M_{\alpha}\cup \{M_{\alpha}\}\cup  {\beta}|= {\omega}+|{\beta}|$
so by L\"owenhein-Skolem Theorem
there is $M_{\beta}\prec V_{\lambda}$ with
$M_{\alpha}\cup \{M_{\alpha}\}\cup  {\beta}\subs M_{\beta} $ and
$|M_{\beta}|={\omega}+|{\beta}|$.
\end{enumerate}
The construction clearly guarantees $(*_{\alpha})$.
Using the chain $\<M_{\alpha}:{\alpha}<{\kappa}\>$
decompose $G$ as follows:
\begin{itemize}
\item for ${\alpha}<{\kappa}$ let $G_{\alpha}=(G\ssetm
  M_{\alpha})\rrestriction {M_{{\alpha}+1}}$.
\end{itemize}
By Lemma \ref{lm:nw2} the graph $G_{\alpha}'=G\ssetm M_{\alpha}$ is
NW. Moreover, since $M_{\alpha}\in M_{{\alpha}+1}$ we have
$G\ssetm M_{\alpha}\in M_{{\alpha}+1}$. So we can apply
Lemma \ref{lm:nw1} for $M_{{\alpha}+1}$ and $G_{\alpha}'$ to deduce
that $G_{\alpha}$ is NW.

So we have decomposed the graph $G$ into NW-graphs
$\{G_{\alpha}:{\alpha}<{\kappa}\}$.  Moreover,
$|V(G_{\alpha})|\le |M_{{\alpha}+1}|\le {\omega}+|{\alpha}|<{\kappa}$,
so by the inductive hypothesis, every $G_{\alpha}$ is the union of
disjoint cycles. So $G$ itself is the union of disjoint cycles which was
to be proved.
\end{proof}

\subsection{General framework}

If $\Phi$ is a graph property then we write $G\in \Phi$
to mean that the graph $G$ has property $\Phi$.

We say that a graph property $\Phi$ is {\em well-reflecting}
iff for each graph $G\in \Phi$ whenever $G\in M \prec_{\Sigma}V$
with $|M|\subs M$ for some large enough finite set $\Sigma$ of formulas,
we have both $G\rrestriction M\in \Phi$ and $G\ssetm M\in \Phi$.

\begin{theorem}\label{tm:r1}
 Let $\Phi$ be a well-reflecting graph property.
Then every graph $G\in \Phi$ can be decomposed into
a family $\{G_i:i \in I\}\subs \Phi$ of countable graphs.
\end{theorem}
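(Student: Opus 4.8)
The plan is to prove Theorem \ref{tm:r1} by transfinite induction on $|V(G)|$, mimicking exactly the structure of the proof of the Nash-Williams Theorem \ref{tm:nw}, but with the abstract property $\Phi$ in place of the concrete NW-property. The base case is $|V(G)|\le{\omega}$: a countable graph $G\in\Phi$ is itself the single member of the desired decomposition, so there is nothing to do. For the inductive step I assume ${\kappa}=|V(G)|>{\omega}$ and that the theorem holds for all graphs in $\Phi$ of cardinality $<{\kappa}$.

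First I would fix a large enough finite set $\Sigma$ of formulas (large enough that the well-reflecting hypothesis applies and so that Claims \ref{cl:omega} and \ref{cl:card} hold), and then invoke the Reflection Principle \ref{pr:refl} to obtain a cardinal ${\lambda}$ with $V_{\lambda}\prec_\Sigma V$ and $\br V_{\lambda};{\kappa};\subs V_{\lambda}$, so that $G\in V_{\lambda}$. Next I would build a continuous increasing chain $\<M_{\alpha}:{\alpha}<{\kappa}\>$ of elementary submodels of $V_{\lambda}$ satisfying $|M_{\alpha}|={\omega}+|{\alpha}|$, ${\alpha}\subs M_{\alpha}$, $G\in M_0$ and $M_{\alpha}\in M_{{\alpha}+1}$, exactly as in steps (i)--(iii) of the proof of Theorem \ref{tm:nw}. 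The conditions ${\alpha}\subs M_{\alpha}$ together with $|M_\alpha|={\omega}+|\alpha|$ guarantee $|M_{\alpha}|\subs M_{\alpha}$, which is the hypothesis the well-reflecting property requires. I would then decompose $G$ by setting
\begin{equation}\notag
G_{\alpha}=(G\ssetm M_{\alpha})\rrestriction M_{{\alpha}+1}
\qquad\text{for }{\alpha}<{\kappa}.
\end{equation}

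The heart of the argument is verifying that each $G_{\alpha}$ lies in $\Phi$ and that the $G_{\alpha}$ genuinely partition the edges of $G$. For membership in $\Phi$: since $G\in M_{\alpha}$ and $|M_\alpha|\subs M_\alpha$, well-reflection gives $G\ssetm M_{\alpha}\in\Phi$; moreover $M_{\alpha}\in M_{{\alpha}+1}$ yields $G\ssetm M_{\alpha}\in M_{{\alpha}+1}$, so applying well-reflection a second time to the restriction operation gives $(G\ssetm M_{\alpha})\rrestriction M_{{\alpha}+1}\in\Phi$. Because $|V(G_{\alpha})|\le|M_{{\alpha}+1}|={\omega}+|{\alpha}|<{\kappa}$, the inductive hypothesis applies to each $G_{\alpha}$ and decomposes it into countable graphs in $\Phi$; the union of all these families over ${\alpha}<{\kappa}$ is the desired decomposition of $G$ into countable members of $\Phi$.

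The step I expect to be the main obstacle is checking that $\{E(G_{\alpha}):{\alpha}<{\kappa}\}$ is a partition of $E(G)$, since unlike the NW-proof (where this is read off from the chain structure) it must be argued carefully here. The key point is that for an edge $e=\{u,v\}\in E(G)$, continuity of the chain and ${\alpha}\subs M_{\alpha}$ force $V(G)=\bigcup_{{\alpha}<{\kappa}}M_{\alpha}$ once ${\kappa}\subs\bigcup M_\alpha$; letting ${\alpha}$ be least with $\{u,v\}\subs M_{{\alpha}+1}$, one checks $e\in E(G_{\alpha})$, while the definition $G_{\alpha}=(G\ssetm M_{\alpha})\rrestriction M_{{\alpha}+1}$ ensures $e$ appears in no other $G_{\beta}$: if $e\in M_{\alpha}$ it is deleted from $G\ssetm M_{\alpha}$, and if some endpoint lies outside $M_{{\beta}+1}$ it is not in the restriction. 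Thus each edge of $G$ lands in exactly one $G_{\alpha}$, completing the proof.
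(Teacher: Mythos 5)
Your proof is, in outline, identical to the paper's: the same $\kappa$-chain of elementary submodels, the same pieces $G_\alpha=(G\ssetm M_\alpha)\rrestriction{M_{\alpha+1}}$, the same double application of well-reflection, and the same induction on $|V(G)|$. The one step where you go beyond the paper --- verifying explicitly that $\{E(G_\alpha):\alpha<\kappa\}$ partitions $E(G)$, which the paper simply asserts --- is also where the argument breaks. Your claim that for the least $\alpha$ with $\{u,v\}\subs M_{\alpha+1}$ one has $e\in E(G_\alpha)$ fails when both endpoints of $e$ lie in $M_0$: in that case the least such $\alpha$ is $0$, but $M_0$ is closed under pairing, so $e=\{u,v\}\in M_0\subs M_\beta$ for every $\beta$, and hence $e$ is deleted from every $G\ssetm M_\beta$; such an edge belongs to no $G_\alpha$ at all. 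This case is not vacuous: since $G\in M_0\prec_\Sigma V$ and $E(G)\ne\empt$, elementarity yields an edge $e\in E(G)\cap M_0$, and its endpoints then lie in $M_0$. So the family $\{G_\alpha:\alpha<\kappa\}$ as defined is never a decomposition of $G$ (unless $E(G)=\empt$). Your minimality argument (the successor/limit analysis) is correct for every edge not contained in $M_0$; only the bottom of the chain fails.

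The repair is cheap: add the graph $G\rrestriction{M_0}$ as one more piece. It is countable, since $|M_0|=\omega$, and it is in $\Phi$ by one more application of well-reflection (using $\omega=|M_0|\subs M_0$). Every edge of $G$ then lies in exactly one member of $\{G\rrestriction{M_0}\}\cup\{G_\alpha:\alpha<\kappa\}$: edges inside $M_0$ are caught by the new piece, and your argument handles the rest. It is worth noting that the paper's own proof (and its proof of Theorem \ref{tm:nw}) contains exactly the same off-by-one gap, hidden in the unproved sentence ``So we have decomposed the graph $G$ \dots''; your instinct that this step required a real argument was sound, and with the extra piece your verification completes it.
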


To prove this theorem we need to introduce the following notion. 
Let ${\kappa}$ and ${\lambda}$ be cardinals.
We say that
$\<M_{\alpha}:{\alpha}<{\kappa}\>$ is a
{\em ${\kappa}$-chain of submodels of
  $V_{\lambda}$}
iff
\begin{enumerate}
\item the sequence $\<M_{\alpha}:{\alpha}<{\kappa}\>\subs
  V_{\lambda}\cap \br
V_{\lambda};<{\kappa};$ is strictly increasing and continuous
(i.e. $M_{\beta}=\cup\{M_{\alpha}:{\alpha}<{\beta}\}$ for limit ${\beta}$),
\item
  $M_{\alpha}\prec V_{\lambda}$, ${\alpha}\subs M_{\alpha} $
and $M_{\alpha}\in M_{{\alpha}+1}$ for ${\alpha}<{\kappa}$,
\end{enumerate}

\begin{fact}\label{fa:chain}
If   $\br V_{\lambda};<{\kappa};\subs V_{\lambda}$ then
for each $x\in V_{\lambda}$ there is a
${\kappa}$-chain of elementary submodels
$\<M_{\alpha}:{\alpha}<{\kappa}\>$ of
$V_{\lambda}$ with $x\in M_0$ and ${\alpha}\subs M_{\alpha}$
for ${\alpha}<\kappa$.
\end{fact}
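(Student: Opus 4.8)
The goal is to build, inside $V_\lambda$, a continuous strictly increasing chain $\<M_\alpha:\alpha<\kappa\>$ of elementary submodels of $V_\lambda$ satisfying the two clauses in the definition of a $\kappa$-chain, together with the extra bookkeeping conditions $x\in M_0$ and $\alpha\subs M_\alpha$. The plan is a straightforward transfinite recursion on $\alpha<\kappa$, exactly in the spirit of the constructions already carried out in Corollary \ref{cor:elem} and in the proof of Theorem \ref{tm:nw}. The one hypothesis I must use is $\br V_\lambda;<\kappa;\subs V_\lambda$, which guarantees that the objects I build at each stage --- each $M_\alpha$ of size $<\kappa$, and the initial segment $\alpha$ --- actually live inside $V_\lambda$, so that the L\"owenheim--Skolem theorem (Theorem \ref{tm:ls}) can be applied to the \emph{set} $V_\lambda$ to produce the next submodel.

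\textbf{The three cases.}

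First I would handle the base step: since $x\in V_\lambda$ and $V_\lambda$ is a set, Theorem \ref{tm:ls} yields a countable $M_0\prec V_\lambda$ with $x\in M_0$; note $0=\empt\subs M_0$ trivially. At a limit stage $\beta<\kappa$ I set $M_\beta=\cup\{M_\alpha:\alpha<\beta\}$, which enforces continuity by fiat; here $|M_\beta|\le |\beta|+\omega<\kappa$ since $\kappa$ is a cardinal and each $|M_\alpha|<\kappa$, so by $\br V_\lambda;<\kappa;\subs V_\lambda$ we get $M_\beta\in V_\lambda$, and $M_\beta\prec V_\lambda$ because a union of an increasing chain of elementary submodels is again an elementary submodel (Tarski--Vaught). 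Also $\beta=\cup\{\alpha:\alpha<\beta\}\subs\cup_\alpha M_\alpha=M_\beta$ using $\alpha\subs M_\alpha$. For the successor stage $\beta=\alpha+1$, the set $M_\alpha\cup\{M_\alpha\}\cup\beta$ has cardinality $|\alpha|+\omega<\kappa$, so it belongs to $V_\lambda$ and is an infinite subset of $V_\lambda$; applying Theorem \ref{tm:ls} I obtain $M_{\alpha+1}\prec V_\lambda$ containing this set with $|M_{\alpha+1}|=|\alpha|+\omega<\kappa$. This simultaneously secures $M_\alpha\in M_{\alpha+1}$, the strict increase $M_\alpha\subsetneq M_{\alpha+1}$ (indeed $M_\alpha\in M_{\alpha+1}\setm M_\alpha$ since $M_\alpha\notin M_\alpha$ by regularity of $\in$), $\alpha+1\subs M_{\alpha+1}$, and membership $M_{\alpha+1}\in V_\lambda$.

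\textbf{Verifying the definition and the main obstacle.}

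Assembling the cases, I would check that the resulting sequence meets every clause of the definition of a $\kappa$-chain: it lies in $V_\lambda\cap\br V_\lambda;<\kappa;$, is continuous and strictly increasing, and satisfies $M_\alpha\prec V_\lambda$, $\alpha\subs M_\alpha$, $M_\alpha\in M_{\alpha+1}$. The one point that genuinely requires the hypothesis --- and which I regard as the crux rather than a difficulty --- is the closure condition $\br V_\lambda;<\kappa;\subs V_\lambda$: without it the limit unions and the successor parameter sets would not be guaranteed to reside in $V_\lambda$, and the recursion could not even be phrased as taking place ``inside'' $V_\lambda$. Everything else is routine transfinite recursion plus the standard facts that elementary submodels are closed under unions of chains and that L\"owenheim--Skolem applies to sets. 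I expect no real obstacle; the proof is short, and the only care needed is the cardinal arithmetic $|\alpha|+\omega<\kappa$ at each stage, valid because $\kappa$ is a cardinal and we only ever enlarge by sets of size $<\kappa$.
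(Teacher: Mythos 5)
Your proof is correct and takes essentially the same route as the paper: the paper's proof of this Fact simply points to the chain already constructed in the proof of Theorem \ref{tm:nw}, which is precisely your recursion --- a countable $M_0\prec V_{\lambda}$ containing the given object, unions at limit stages, and the L\"owenheim--Skolem theorem applied to $M_{\alpha}\cup\{M_{\alpha}\}\cup{\beta}$ at successor stages, with the hypothesis $\br V_{\lambda};<{\kappa};\subs V_{\lambda}$ keeping every stage inside $V_{\lambda}$. The points you spell out explicitly (strict increase via $M_{\alpha}\in M_{{\alpha}+1}$ and foundation, elementarity of limit unions via Tarski--Vaught) are left implicit in the paper.
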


\begin{proof}
Actually such a chain was constructed  in the proof of
Theorem \ref{tm:nw}.
\end{proof}

\begin{proof}[Proof of Theorem \ref{tm:r1}]
By induction on $|G|$.
If $|G|$ is countable then there is nothing to prove.

Assume that 
$G=\<{\kappa},E\>$ and 
${\kappa}>{\omega}$.
By the Reflection Principle \ref{pr:refl}
there is a cardinal ${\lambda}$ such that $V_{\lambda}\prec_\Sigma V$
and $\br V_{\lambda};{\kappa};\subs V_{\lambda}$.
Then, by  Fact \ref{fa:chain} there is a
${\kappa}$-chain of elementary submodels
of $V_{\lambda}$ with $G\in M_0$.
 For ${\alpha}<{\kappa}$ let $G_{\alpha}=(G\ssetm
  M_{\alpha})\rrestriction {M_{{\alpha}+1}}$.
 Since $\Phi$ is well-reflecting,
the graph $G_{\alpha}'=G\ssetm M_{\alpha}$ is in $ \Phi$.
 Moreover, since $M_{\alpha}\in M_{{\alpha}+1}$ we have
$G\ssetm M_{\alpha}\in M_{{\alpha}+1}$. 
So applying once more the fact that $\Phi$ is well reflecting
 for $M_{{\alpha}+1}$ and $G_{\alpha}'$ we obtain that 
 $G_{\alpha}$ is in $\Phi$.

So we have decomposed the graph $G$ into graphs
$\{G_{\alpha}:{\alpha}<{\kappa}\}\subs \Phi$.  However
$|V(G_{\alpha})|\le |M_{{\alpha}+1}|\le {\omega}+|{\alpha}|<{\kappa}$,
so by the inductive hypothesis, every $G_{\alpha}$ has a decomposition  
 $\mc G_{\alpha}$ into countable elements of $\Phi$.
Then $\mc G=\cup\{\mc G_{\alpha}:{\alpha}<{\kappa}\}$ is the desired
decomposition of $G$.
\end{proof}

\begin{theorem}\label{tm:rmain}
Let $\Phi$ and $\Psi$ be graph properties.
Assume that
\begin{enumerate}[(1)]
\item $\Phi$ is well-reflecting,
\item if $H\in \Phi$ is a countable graph then $H\in \Psi$,
\item if $G$ has a decomposition
$\{G_i:i\in I\}$ with $G_i\in \Psi$ then $G\in \Psi$.
\end{enumerate}
Then $G\in \Phi$ implies $G\in \Psi$.
\end{theorem}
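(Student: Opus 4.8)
The plan is to read this statement off immediately from Theorem \ref{tm:r1} together with the two closure hypotheses (2) and (3). Indeed, the three assumptions are engineered to chain together in exactly one way: hypothesis (1) lets us break $G$ into countable $\Phi$-pieces, hypothesis (2) converts each countable $\Phi$-piece into a $\Psi$-piece, and hypothesis (3) reassembles the $\Psi$-pieces back into $G$. So the theorem is really just the abstract packaging of the induction already carried out in the proof of Theorem \ref{tm:r1}.

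Concretely, suppose $G \in \Phi$. Since $\Phi$ is well-reflecting by (1), Theorem \ref{tm:r1} applies and produces a decomposition $\{G_i : i \in I\} \subs \Phi$ of $G$ into countable graphs, each of which lies in $\Phi$. Next I would invoke hypothesis (2): each $G_i$ is a countable graph belonging to $\Phi$, hence $G_i \in \Psi$ for every $i \in I$. Thus $\{G_i : i \in I\}$ is a decomposition of $G$ all of whose members lie in $\Psi$, and hypothesis (3) yields $G \in \Psi$, as desired.

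There is essentially no obstacle here: the entire combinatorial and set-theoretic content — the Reflection Principle, the construction of the $\kappa$-chain, and the well-reflection of $\Phi$ through $G \rrestriction M$ and $G \ssetm M$ — has already been absorbed into Theorem \ref{tm:r1}. The only point worth checking is that the family delivered by Theorem \ref{tm:r1} is a genuine decomposition (a partition of $E(G)$ in the sense defined in section \ref{sc:prelim}), so that hypothesis (3) can be applied verbatim to it; but that is precisely the conclusion of Theorem \ref{tm:r1}, so the argument goes through without further work.
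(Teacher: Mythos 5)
Your proposal is correct and coincides with the paper's own proof: both apply Theorem \ref{tm:r1} via hypothesis (1) to decompose $G$ into countable members of $\Phi$, then use (2) to place each piece in $\Psi$, and finally (3) to conclude $G\in \Psi$. Nothing further is needed.
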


\begin{proof}
Theorem \ref{tm:r1}
and (1) yield that $G$ 
has a decomposition into 
 countable graphs $\{G_i:i\in I\}\subs
\Phi$. By $(2)$ , $\{G_i:i\in I\}\subs
\Psi$. Finally, by (3), this implies $G\in \Psi$ which was to be proved.
\end{proof}

In Lemmas \ref{lm:nw1} and \ref{lm:nw2}
we proved that the graph property
``{\em there is  no odd cut}'' is  well-reflecting.

As we will see, Theorem \ref{tm:rmain} can be applied as
a ``black box'' principle in many proofs.

\subsection{Applications of Theorem \ref{tm:rmain}}
First we give a new proof of   a result of Laviolette.

\begin{theorem}[{\cite[Corollary 1]{La}}]\label{tm:bridgeless}
Every bridgeless graph can be partitioned into countable bridgeless
graphs.
\end{theorem}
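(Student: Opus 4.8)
The plan is to apply the black-box principle of Theorem~\ref{tm:rmain} with $\Phi$ being the property ``$G$ is bridgeless'' and $\Psi$ being the property ``$G$ can be partitioned into countable bridgeless graphs''. Recall that a bridge is an edge whose removal disconnects its endpoints, equivalently a bond of size one; so $G$ is bridgeless iff it has no cut of size one, iff ${\gamma}_G(x,y)\ne 1$ for all adjacent $x\ne y$. To invoke Theorem~\ref{tm:rmain} I must verify its three hypotheses. Hypotheses (2) and (3) are immediate from the very definition of $\Psi$: if $H\in\Phi$ is countable then $H$ is trivially a one-element decomposition of itself into a countable bridgeless graph, so $H\in\Psi$; and if $G$ has a decomposition $\{G_i:i\in I\}$ with each $G_i\in\Psi$, then refining each $G_i$ into its countable bridgeless pieces exhibits $G\in\Psi$, since a decomposition of a decomposition is a decomposition.

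The real work is hypothesis~(1): that ``bridgeless'' is well-reflecting, i.e. that for $G\in\Phi$ with $G\in M\prec_\Sigma V$ and $|M|\subs M$, both $G\rrestriction M$ and $G\ssetm M$ are bridgeless. For $G\ssetm M$ I expect the argument to be essentially the one already carried out for NW-graphs. Indeed, suppose $G\ssetm M$ had a bridge $F=\{c_1c_2\}$, a bond of size one; then ${\gamma}_{G\ssetm M}(c_1,c_2)>0$ and $F$ separates $c_1,c_2$ in $G\ssetm M$, with $|F|=1\le|M|$. Lemma~\ref{lm:GminuszM} applies verbatim and gives that $F$ separates $c_1$ and $c_2$ in $G$ as well, so $\{c_1c_2\}$ is a cut of size one in $G$, i.e.\ a bridge of $G$, contradicting $G\in\Phi$. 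This is the analogue of the proof of Lemma~\ref{lm:nw2} from Lemma~\ref{lm:GminuszM}, only simpler, because a single edge is automatically a bond and one does not need Proposition~\ref{pr:bond} to reach the contradiction.

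For $G\rrestriction M$ the argument runs parallel to Lemma~\ref{lm:weak}/\ref{lm:nw1}: assume $\{bc\}$ is a bridge of $G\rrestriction M$. Since a single edge cannot be a bond of $G$ (as $G$ is bridgeless), Proposition~\ref{pr:bond} forces $b,c$ to lie in one connected component $D$ of $G\ssetm\{bc\}$, so there is a path from $b$ to $c$ in $G$ avoiding $\{bc\}$. As in Lemma~\ref{lm:weak}, with $b,c\in M$ and $\{bc\}\in M$ and $\Sigma$ large enough, the existence of such a finite path reflects into $M$: there is a path $f(0)\dots f(m-1)$ from $b$ to $c$ with all vertices in $M\cap V(G)$ and all edges in $E(G)\setminus\{bc\}$, hence in $E(G\rrestriction M)\setminus\{bc\}$. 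This path witnesses that $b$ and $c$ remain connected in $(G\rrestriction M)\ssetm\{bc\}$, contradicting that $\{bc\}$ was a bridge there. The point is that the whole Nash-Williams development was already abstracted so that the parity of cuts is never used in the reflection lemmas---only the separation behaviour of edge sets---so the bridgeless property fits the same template.

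The main obstacle, such as it is, is bookkeeping rather than mathematics: one must confirm that the hypothesis $|F|\le|M|$ in Lemma~\ref{lm:GminuszM} is satisfied (here $|F|=1$, trivially fine) and that the finite collection $\Sigma$ of formulas suffices to reflect both the path-existence statement used for $G\rrestriction M$ and the separating-set statement used inside Lemma~\ref{lm:GminuszM}; since we only ever reflect finitely many formulas, we may take $\Sigma$ to contain all of them. Having checked (1)--(3), Theorem~\ref{tm:rmain} yields $G\in\Psi$, which is exactly the assertion that every bridgeless graph can be partitioned into countable bridgeless graphs.
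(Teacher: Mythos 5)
Your proposal is correct and takes essentially the same route as the paper: you establish that the bridgeless property is well-reflecting (the $G\ssetm M$ half via Lemma \ref{lm:GminuszM} with $F$ a single edge, exactly as the paper does; the $G\rrestriction M$ half by reflecting a path obtained from Proposition \ref{pr:bond} into $M$, which is just the contrapositive form of the paper's direct transfer of ``$e$ separates $x$ and $y$'' from $M$ to $V$) and then invoke the general decomposition framework. The only cosmetic difference is that you route through Theorem \ref{tm:rmain} with $\Psi$ taken to be the conclusion itself, whereas the paper applies Theorem \ref{tm:r1} directly; since Theorem \ref{tm:rmain} is deduced from Theorem \ref{tm:r1}, this changes nothing of substance.
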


\begin{proof}
We need the following lemma:  

\begin{lemma}\label{lm:bridgeless-wr}\label{tm:la}
The ``{\em bridgeless}'' property  is well-reflecting.  
\end{lemma}

\begin{proof}[Proof of Lemma \ref{lm:bridgeless-wr}]
Assume that $G$ is a  graph and  $G\in M\prec_\Sigma V$ for
some large enough finite family $\Sigma$ of formulas.\\
(1)  Assume that an edge $e=xy$ is a bridge in $G\rrestriction M$.
Then 
\begin{equation}
M\models \text{$e$ separates $x$ and $y$},  
\end{equation}
so, by $M\prec_\Sigma V$
\begin{equation}
V\models \text{$e$ separates $x$ and $y$},  
\end{equation}
i.e. $e$ is a bridge in $G$.\\
(2) Assume that an edge $e=xy$ is a bridge in $G\ssetm  M$.
Then $e$ separates $x$ and $y$ in $G\ssetm M$,
so by Lemma \ref{lm:GminuszM}, $e$ separates $x$ and $y$ in $G$,
i.e. $e$ is a bridge in $G$. 
\end{proof}

By Lemma \ref{lm:bridgeless-wr}, we can apply Theorem \ref{tm:r1}
to get the statement of this theorem.
\end{proof}

Let us formulate two corollaries.
\begin{corollary}[Laviolette, {\cite[Theorem 1]{La}}]\label{co:la}
Every bridgeless graph has a cycle ${\omega}$-cover.
\end{corollary}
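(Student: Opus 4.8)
The plan is to apply the ``black box'' Theorem \ref{tm:rmain} with $\Phi$ the property ``\emph{bridgeless}'' and $\Psi$ the property ``\emph{has a cycle $\omega$-cover}''. We already know from Lemma \ref{lm:bridgeless-wr} that $\Phi$ is well-reflecting, so hypothesis (1) of Theorem \ref{tm:rmain} holds. It therefore remains to verify the two remaining hypotheses: (2) every \emph{countable} bridgeless graph has a cycle $\omega$-cover, and (3) if a graph $G$ has a decomposition into graphs each having a cycle $\omega$-cover, then $G$ itself has one. Granting these, Theorem \ref{tm:rmain} immediately yields that every bridgeless graph has a cycle $\omega$-cover.

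For hypothesis (2), the key observation is that in a bridgeless graph every edge lies on a cycle: if $e=xy$ were on no cycle, then deleting $e$ would separate $x$ and $y$, i.e. $e$ would be a bridge. So let $H$ be a countable bridgeless graph and, for each edge $e\in E(H)$, fix a cycle $C_e$ with $e\in E(C_e)$. I would then take as the cover the indexed family consisting of each $C_e$ listed $\omega$ many times, i.e. $\<C_e:e\in E(H),\ n<{\omega}\>$. A given edge $f$ is covered by a copy of $C_e$ exactly when $f\in E(C_e)$; since $f\in E(C_f)$ there are at least $\omega$ such copies, while $E(H)$ is countable and each cycle is repeated only $\omega$ times, so there are at most ${\omega}\cdot{\omega}={\omega}$ of them. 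Hence every edge is covered exactly $\omega$ times.

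For hypothesis (3), suppose $\{G_i:i\in I\}$ is a decomposition of $G$ with each $G_i$ carrying a cycle $\omega$-cover $\mc C_i$. I would simply take the union $\bigcup\{\mc C_i:i\in I\}$ as an indexed family. Because the decomposition partitions $E(G)$, each edge $f$ belongs to exactly one $G_i$, and every cycle in $\mc C_j$ (being a subgraph of $G_j$) uses only edges of $G_j$; thus for $j\ne i$ no member of $\mc C_j$ contains $f$, while $\mc C_i$ covers $f$ exactly $\omega$ times. So $f$ is covered exactly $\omega$ times in the union, as required.

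The steps are individually short, so there is no serious obstacle once Theorem \ref{tm:rmain} and Lemma \ref{lm:bridgeless-wr} are in hand; the one point that must be handled with care is the meaning of ``family'' in the definition of a cover. Repetitions are essential: a graph which is a single finite cycle is bridgeless, yet each of its edges lies on only one cycle, so no cover by \emph{distinct} cycles can cover an edge $\omega$ times. Reading ``family'' as an indexed family (equivalently, a multiset) of subgraphs, as above, is exactly what makes both (2) and (3) go through.
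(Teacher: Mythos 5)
Your proof is correct and follows essentially the same route as the paper: the paper deduces the corollary from Theorem \ref{tm:bridgeless} (partition into countable bridgeless graphs) together with the remark that countable bridgeless graphs ``clearly'' have cycle $\omega$-covers, which is precisely your hypotheses (2) and (3) of Theorem \ref{tm:rmain} packaged together with Lemma \ref{lm:bridgeless-wr}. Your explicit verification of (2) and (3) --- including the observation that ``cover'' must be read as an indexed family admitting repetitions, without which even a single finite cycle would be a counterexample --- merely fills in details the paper leaves to the reader.
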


\begin{proof}
Every countable bridgeless graph clearly has a  cycle ${\omega}$-cover,
and by the previous theorem every bridgeless graph can be partitioned
into countable bridgeless graphs. 
\end{proof}

It is worth mentioning that 
in \cite{La} Theorem \ref{tm:la} was a corollary of 
Corollary \ref{co:la}.

Before formulation of the second corollary 
let us recall the following conjecture of Seymour and Szekeres.
\begin{dcc}
Every bridgeless graph has a cycle double cover.
\end{dcc}

Since every bridgeless graph can be partitioned into countable
bridgeless graphs, we yield
\begin{corollary}[Laviolette, \cite{La}]
If the  Double Circle Conjecture holds for all countable
graphs then it holds for all graphs.
\end{corollary}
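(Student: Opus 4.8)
The plan is to apply the black-box principle of Theorem \ref{tm:rmain} with $\Phi$ the ``bridgeless'' property and $\Psi$ the property ``has a cycle double cover'', i.e. a cycle $2$-cover. With these choices, the desired implication ``$G$ bridgeless $\Rightarrow$ $G$ has a cycle double cover'' is exactly the conclusion ``$G\in \Phi \Rightarrow G\in \Psi$'' of Theorem \ref{tm:rmain}, so it suffices to check the three hypotheses of that theorem.

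Hypothesis (1), that $\Phi$ is well-reflecting, is precisely Lemma \ref{lm:bridgeless-wr}, which has already been proved. Hypothesis (2), that every countable graph in $\Phi$ lies in $\Psi$, says that every countable bridgeless graph has a cycle double cover; this is exactly the assumption of the corollary, namely that the Double Cover Conjecture holds for all countable graphs. Thus the only genuine work left is to verify hypothesis (3): if a graph $G$ admits a decomposition $\{G_i:i\in I\}$ with each $G_i\in \Psi$, then $G\in\Psi$.

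To see (3), fix for each $i\in I$ a cycle double cover $\mc C_i$ of $G_i$ and set $\mc C=\bigcup_{i\in I}\mc C_i$. Each element of $\mc C_i$ is a cycle built from edges of $G_i$, hence from edges of $G$, so it is also a cycle in $G$; thus $\mc C$ is a family of cycles of $G$. Because $\{G_i:i\in I\}$ is a decomposition, the edge sets $\{E(G_i):i\in I\}$ partition $E(G)$, so every edge $e\in E(G)$ belongs to $E(G_i)$ for exactly one index $i$. No cycle of $\mc C_j$ with $j\ne i$ can contain $e$, while the cycles of $\mc C_i$ cover $e$ exactly twice; therefore $e$ lies in exactly two members of $\mc C$. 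This shows $\mc C$ is a cycle double cover of $G$, i.e. $G\in\Psi$, establishing (3).

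With all three hypotheses in hand, Theorem \ref{tm:rmain} yields that every bridgeless graph has a cycle double cover, which is the content of the corollary. I expect essentially no obstacle here: conditions (1) and (2) are, respectively, an already established lemma and a verbatim restatement of the hypothesis, and the only real verification, condition (3), is the routine observation that double covers with disjoint edge supports glue to a double cover. The one point deserving care is that a cycle of a piece $G_i$ remains a cycle of $G$ and that the partition of the edge set keeps the covering multiplicity at exactly two.
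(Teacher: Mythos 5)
Your proposal is correct and is essentially the paper's own argument in a different wrapping: the paper deduces the corollary in one line from Theorem \ref{tm:bridgeless} (every bridgeless graph partitions into countable bridgeless graphs, itself obtained from Lemma \ref{lm:bridgeless-wr} and Theorem \ref{tm:r1}), leaving the gluing of the countable pieces' double covers implicit, while you route the same ingredients through Theorem \ref{tm:rmain} and verify its hypothesis (3) explicitly. Since Theorem \ref{tm:rmain} is just Theorem \ref{tm:r1} plus that gluing step, the two proofs coincide in substance.
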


Next we sketch two more applications.

In \cite{nw} the following statements were also proved:
\begin{theorem}[Nash-Williams]\label{tm:nw2}
(1) A graph $G$ can be decomposed into cycles and
endless chains  if and only if it has no
vertex of odd valency. (2) $G$ is decomposable into endless chains if
and only if it has no vertex of odd valency and no  finite
non-trivial component.
\end{theorem}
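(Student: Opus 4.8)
The plan is to derive both equivalences from the ``black box'' Theorem \ref{tm:rmain}. For part (1) I take $\Phi$ to be the property ``\emph{$G$ has no vertex of odd valency}'' and $\Psi$ to be ``\emph{$G$ is decomposable into cycles and endless chains}''; for part (2) I take $\Phi$ to be ``\emph{$G$ has no vertex of odd valency and no finite non-trivial component}'' and $\Psi$ to be ``\emph{$G$ is decomposable into endless chains}''. Hypothesis (3) of Theorem \ref{tm:rmain} is immediate in both cases, since gluing decompositions of the pieces of a decomposition of $G$ yields a decomposition of $G$ of the same type; hypothesis (2) is precisely the countable case of the theorem, which I would quote from \cite{nw} (or verify directly for countable graphs). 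Thus the whole weight falls on hypothesis (1): that these two $\Phi$'s are well-reflecting.

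First I would dispose of the ``only if'' directions $\Psi\Rightarrow\Phi$. If $G$ is edge-partitioned into cycles and endless chains, then every piece through a vertex $v$ contributes exactly $2$ to the valency of $v$, so $\deg_G(v)$ is a possibly infinite sum of $2$'s, hence even or infinite but never odd. If in addition every piece is an endless chain, then an edge of any component lies on an endless chain, which is infinite and connected and therefore contained in that component, so no component is finite and non-trivial. This settles both converses.

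For the forward directions I must check well-reflection, so fix $G\in M\prec_\Sigma V$ with $|M|\subs M$ and a vertex $v$. Since the neighbourhood $N_G(v)$ is definable from $G$ and $v$, Claim \ref{cl:def} gives $N_G(v)\in M$ whenever $v\in M$. The valencies then behave as follows. (a) If $v\in M$ has finite valency, $N_G(v)$ is a finite element of $M$, so $N_G(v)\subs M$ by Claim \ref{cl:omegasub} and every edge at $v$ is a finite subset of $M$, hence in $M$ by Claim \ref{cl:omega}; thus $\deg_{G\rrestriction M}(v)=\deg_G(v)$ is even while $\deg_{G\ssetm M}(v)=0$. (b) If $v\in M$ has infinite valency, then fixing in $M$ an injection of $\omega$ into $N_G(v)$ and evaluating it at each $n\in\omega\subs M$ produces infinitely many neighbours of $v$ inside $M$, so $\deg_{G\rrestriction M}(v)$ is infinite; and for $G\ssetm M$ a cardinality count, using $|M|\subs M$ and Claim \ref{cl:card}, shows that $|N_G(v)\setm M|$ is $0$ when $|N_G(v)|\le|M|$ and equals $|N_G(v)|$ when $|N_G(v)|>|M|$, so it is never odd. (c) If $v\notin M$, no edge at $v$ can lie in $M$, since an edge in $M$ has both endpoints in $M$; hence $\deg_{G\ssetm M}(v)=\deg_G(v)$, which is not odd. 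Therefore both $G\rrestriction M$ and $G\ssetm M$ avoid odd valencies, which already proves part (1).

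The remaining, and I expect hardest, point is to reflect ``\emph{no finite non-trivial component}'', which part (2) needs. Suppose $C$ were a finite non-trivial component of $G\rrestriction M$ or of $G\ssetm M$. By (b), a vertex of infinite $G$-valency keeps infinite valency in $G\rrestriction M$ and has valency $0$ or infinite in $G\ssetm M$, so in either graph it cannot belong to a finite non-trivial component; hence every vertex of $C$ has finite $G$-valency. For $G\rrestriction M$, (a) then puts all $G$-neighbours of each $v\in C$ into $M$, and being adjacent to $v$ they lie in $C$. For $G\ssetm M$, a vertex of $C$ lying in $M$ would by (a) be isolated after the deletion, so $C\subs V(G)\setm M$, and then (c) shows every edge at each vertex of $C$ survives, whence again all neighbours lie in $C$. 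In both cases $C$ is closed under $G$-adjacency and induces the same edges in $G$ as in the reflected graph, so $C$ is itself a finite non-trivial component of $G$, contradicting $G\in\Phi$. This gives well-reflection for part (2), and Theorem \ref{tm:rmain} then finishes the proof.
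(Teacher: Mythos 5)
Your proposal is correct and takes essentially the same approach as the paper: it invokes the black-box Theorem \ref{tm:rmain} with exactly the same choices of $\Phi_i$ and $\Psi_i$, quotes the countable case (the paper's Lemma \ref{lm:nw12c}, likewise left unproved), and concentrates all the work in the well-reflection of the two properties, which is precisely the paper's Lemma \ref{lm:nw12}. The only differences are minor internal ones: for ``no finite non-trivial component'' you argue via your degree dichotomy (a)--(c) plus closure of $C$ under $G$-adjacency, whereas the paper reflects ``$C$ is the component of $x$'' by elementarity for $G\rrestriction M$ and runs a cardinality count ($d_G(c)>|M|$ would force $|C|>|M|$) for $G\ssetm M$; and you spell out the trivial ``only if'' directions, which the paper leaves implicit.
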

Let us recall that a connected component is {\em non-trivial}
if it has at least two elements.

\begin{proof}[Proof of \ref{tm:nw2}]
For $j=1,2$
we say that a graph $G$ is {\em $NW_j$} iff
$G$ satisfies the assumption of statement (j) from \ref{tm:nw2}.

\begin{lemma}\label{lm:nw12c} 
The statements of Theorem \ref{tm:nw2} hold for countable graphs.
\end{lemma}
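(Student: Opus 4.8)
The plan is to prove Lemma \ref{lm:nw12c} by proving both statements of Theorem \ref{tm:nw2} for countable graphs directly, using the fact that a graph with no odd cut (NW) is decomposable into cycles for countable graphs, which is the content of the countable case already handled in the proof of Theorem \ref{tm:nw}. First I would observe that both statements reduce to an analysis of what the degree/valency conditions force on the edge set. For a countable graph $G=\<V,E\>$ I would enumerate the edges (or the vertices) as $\{e_i:i<{\omega}\}$ and build the desired cycles and endless chains by a back-and-forth / greedy recursion, at each stage peeling off one cycle or one doubly-infinite chain covering a prescribed not-yet-covered edge, while preserving the relevant parity/valency invariant so that the recursion can continue.

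For statement (1), the plan is as follows. The hypothesis is that $G$ has no vertex of odd valency. I would show that if a countable graph has no vertex of odd (finite) valency, then the edge set decomposes into cycles and endless (two-way infinite) chains. The key local fact is that at every vertex the number of incident edges is even or infinite, so one can always extend a partial chain or close up a cycle without stranding an edge at a vertex of odd degree. Concretely I would fix an edge $e_0=xy$, start a maximal trail (edge-path) from it, and argue that such a trail either closes into a cycle or, running in both directions, terminates only by becoming doubly infinite (it cannot stop at a vertex of positive even or infinite residual degree). Removing the cycle or chain preserves the "no odd valency" property on the remaining graph $G'$, since a cycle removes two edges at each of its vertices and an endless chain removes two edges at each internal vertex and has no endpoints; so the invariant is maintained and the recursion, together with the edge enumeration ensuring every edge is eventually covered, yields the decomposition.

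For statement (2), the additional hypotheses are no odd valency and no finite non-trivial component; here the goal is a decomposition into endless chains alone (no cycles allowed). The plan is to reuse the construction from (1) but to avoid ever closing up a cycle: whenever a trail would close, the extra hypothesis that the component is infinite lets me reroute so that the trail escapes to infinity in both directions instead of biting its own tail. The finite non-trivial components are exactly the obstruction to turning a cycle into an endless chain, which is why statement (2) excludes them; in an infinite component with all valencies even or infinite, every edge lies on some doubly infinite trail. I would again proceed by recursion on an edge enumeration, at each step extracting an endless chain through the least uncovered edge and checking that $G\ssetm(\text{chain})$ still has no odd valency and no finite non-trivial component, so the induction continues.

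The main obstacle I expect is the bookkeeping in (2) that guarantees one can always route a trail to infinity in both directions rather than being forced to close a cycle, i.e. verifying that in a connected infinite graph with all valencies even or infinite every edge extends to an endless chain, and that removing such a chain does not create a finite non-trivial component. This requires a careful extension argument: one shows a maximal trail through $e$ cannot terminate at a vertex (parity/infinitude of degree forbids an odd number of incident edges at a would-be endpoint) and cannot be forced to close if the component is infinite, using that a finite closed trail leaves the rest of the (still infinite) component attached, allowing re-extension. I would isolate this as the crux and phrase statement (1) as the comparatively routine warm-up. The absoluteness machinery of Section \ref{sc:first} plays no role in this lemma, since it is purely a statement about countable graphs; it is the base case that Theorem \ref{tm:rmain} will later lift to arbitrary cardinality.
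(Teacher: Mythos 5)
The paper does not actually prove Lemma \ref{lm:nw12c} --- it is explicitly ``left to the reader'' --- so there is no in-paper argument to compare yours against; your sketch must stand on its own, and it has a genuine gap. The gap lies in the interplay between what ``endless chain'' means and your invariant-preservation step. In Nash-Williams's theorem \cite{nw} the chains are two-way infinite \emph{trails} (edges distinct, vertices may repeat); they cannot be read as two-way infinite paths, or else Theorem \ref{tm:nw2}(1) itself becomes false. Indeed, let $D$ be the ``double star'' with vertices $x,w,y_0,y_1,\dots$ and edges $xy_i,wy_i$ ($i<\omega$), take a disjoint copy $D'$ and join $x$ to $x'$ by an edge $e$: every valency is $2$ or $\omega$, but $e$ is a bridge (hence on no cycle), and every path in $D$ has at most five vertices, so there is no ray from $x$ in $D$ and hence no endless \emph{path} through $e$. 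So your extension process must be allowed to produce trails --- which it naturally does, as your own word ``trail'' indicates.

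But for trails your key claim ``removing the chain preserves the no-odd-valency property, since an endless chain removes two edges at each internal vertex'' is false: that accounting is valid only when the chain meets each vertex finitely often, whereas an endless trail can pass through a vertex of infinite valency infinitely often and leave an odd finite number of edges behind. Concretely, let $G$ be $D$ together with a triangle $xab$. There is an endless trail through $xy_1$ that covers the triangle and every edge of $D$ except $xy_0$ and $wy_0$; removing it leaves the two-edge path $xy_0w$, with valencies $1,2,1$, which is not decomposable at all. Likewise for statement (2): removing from $G$ the endless trail that covers exactly the edges of $D$ leaves the triangle $xab$ as a finite non-trivial component. So with arbitrary choices your recursion dies after one step; proving that a chain with \emph{removable} trace exists is essentially the theorem itself, and that idea is missing from the sketch. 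A standard way to supply it for (1): split each vertex of infinite valency into $\omega$ vertices of valency $2$ by pairing up its edges; the split graph is locally finite with all valencies even, your greedy argument is correct there (rays exist by K\"onig's lemma, and removing a cycle or endless path subtracts $0$ or $2$ from every valency), and projecting back turns its cycles and endless paths into circuits and endless trails of $G$, circuits decomposing further into cycles. For (2) one must moreover split the vertices of finite valency $\ge 4$ as well, and choose all the pairings so that the resulting valency-$2$ graph has no finite component, i.e.\ is a disjoint union of endless paths; arranging that uses both hypotheses of (2) and is where the remaining work lies.
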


The proof   of Lemma
\ref{lm:nw12c} is left to the reader.

\begin{lemma}\label{lm:nw12}
The following graph properties are well-reflecting:
\begin{enumerate}[(1)]
\item  there is no vertex of odd valency.
\item  there is no finite non-trivial component.
\end{enumerate}
\end{lemma}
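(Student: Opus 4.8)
The plan is to verify each property separately, proving both that it is preserved under the $G\rrestriction M$ restriction and under the $G\ssetm M$ deletion, in each case reducing to Lemma \ref{lm:GminuszM} wherever a separation/connectivity statement is involved. Recall that \emph{well-reflecting} requires, for $G\in M\prec_\Sigma V$ with $|M|\subs M$, that both $G\rrestriction M$ and $G\ssetm M$ inherit the property.

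\medskip\noindent\textbf{Property (1): no vertex of odd valency.} First I would handle the restriction $G\rrestriction M$. The key point is that the valency of a fixed vertex $v\in M\cap V(G)$ is an absolute quantity: since $G,v\in M$ and $M\prec_\Sigma V$ for $\Sigma$ large enough, the neighbourhood $N_G(v)=\{u:\{u,v\}\in E(G)\}$ is definable from $G$ and $v$, hence $N_G(v)\in M$ by Claim \ref{cl:def}. If $v$ had even (in particular possibly infinite) valency in $G$ but odd valency in $G\rrestriction M$, then $v$ would have only finitely many $G$-neighbours lying in $M$ while having evenly many in $V$; but if $N_G(v)$ is finite then $N_G(v)\subs M$ by Claim \ref{cl:omegasub} (or Claim \ref{cl:card}), so the valency is computed identically in $M$ and in $V$, and if $N_G(v)$ is infinite then $v$ is not a vertex of odd valency in either graph. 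Thus no odd valency is created. For the deletion $G\ssetm M$, note that deleting edges in $M$ can only decrease a vertex's valency; for $v\in V(G)$, the edges at $v$ lying in $M$ are exactly the edges at $v$ both of whose endpoints lie in $M$ (here I would invoke the hypothesis $\forall x,y(x,y\in M\leftrightarrow\{x,y\}\in M)$ that makes $G\rrestriction M$ and $G\ssetm M$ a genuine decomposition). Hence $\mathrm{val}_{G\ssetm M}(v)=\mathrm{val}_G(v)-\mathrm{val}_{G\rrestriction M}(v)$ whenever these are finite, and since both $G$ and $G\rrestriction M$ have no odd valency, neither does $G\ssetm M$; the infinite-valency case is again trivial.

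\medskip\noindent\textbf{Property (2): no finite non-trivial component.} For the restriction, a finite non-trivial component $D$ of $G\rrestriction M$ would be a finite set of vertices in $M$ with at least one edge; being finite, $D\subs M$ and $\br D;2;\cap E(G)\subs M$, so $D$ is definable from $G\rrestriction M$ inside $M$. The statement ``$D$ is a finite non-trivial component'' reflects between $M$ and $V$, so $D$ would also be a component of $G$, contradicting $G\in \Phi$. The genuinely delicate case is the deletion $G\ssetm M$, and I expect this to be the main obstacle: deleting the edges in $M$ could in principle \emph{disconnect} an infinite component of $G$ into finite pieces. Here is where Lemma \ref{lm:GminuszM} does the work. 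Suppose $D$ were a finite non-trivial component of $G\ssetm M$; pick an edge $xy$ inside $D$, so $\gamma_{G\ssetm M}(x,y)>0$, and let $F=E(G)\cap[D,\overline D]$ be the (finite, since $D$ is finite and valencies one would control) edge-boundary of $D$ in $G$, all of which lies in $M$. Then $F\subs E(G\ssetm M)$ separates $x$ from the rest of $G\ssetm M$, hence separates $x$ from any $y'\notin D$; by Lemma \ref{lm:GminuszM}, applied once $|F|\le|M|$ is checked, $F$ separates those vertices in $G$ as well, forcing $D$ to be a union of components of $G$ and hence a finite non-trivial component of $G$ — a contradiction. The careful accounting here is confirming that the separating set $F$ is finite and lies in $M$, so that Lemma \ref{lm:GminuszM} genuinely applies; I would present (2)'s deletion case essentially as a direct appeal to Lemma \ref{lm:GminuszM} in the same spirit as the proof of Lemma \ref{lm:nw2} from Lemma \ref{lm:GminuszM} given above.
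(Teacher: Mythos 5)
Your arguments for $G\rrestriction M$ are essentially the paper's, but both of your deletion cases have gaps, and the one in property (2) is fatal. You set $F=E(G)\cap[D,\bar D]$ and correctly observe that every boundary edge of $D$ lies in $M$ (since $D$ is a component of $G\ssetm M$); but you then assert $F\subs E(G\ssetm M)$, which is impossible unless $F=\empt$, because $E(G\ssetm M)=E(G)\setm M$. More seriously, you apply the conclusion of Lemma \ref{lm:GminuszM} to pairs $x\in D$, $y'\notin D$, and for such pairs the hypothesis ${\gamma}_{G\ssetm M}(x,y')>0$ fails outright: $x$ and $y'$ lie in \emph{different} components of $G\ssetm M$, so ${\gamma}_{G\ssetm M}(x,y')=0$. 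That hypothesis cannot be waived: Lemma \ref{lm:GminuszM} speaks only about vertices in the \emph{same} component of $G\ssetm M$, and no lemma could show that distinct components of $G\ssetm M$ stay separated in $G$ --- that statement is false in general, since the deleted edges of $M$ typically reconnect them (an infinite path with a single middle edge in $M$ already shows this). Hence your intended conclusion, that $D$ is a union of components of $G$, is unreachable by this route; moreover your parenthetical claim that $F$ is finite is unjustified, since property (2) imposes no control on valencies at all.

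The paper's argument for this case is not an appeal to Lemma \ref{lm:GminuszM} but a counting argument with elementarity. If $C$ is a finite non-trivial component of $G\ssetm M$, pick a $G$-edge leaving $C$ (one exists: the $G$-component containing $C$ is non-trivial, hence infinite, hence strictly larger than $C$); such an edge cannot belong to $G\ssetm M$, so it is an edge $cd\in E(G)\cap M$ with $c\in C$, and then $c\in M$ by Claims \ref{cl:omega} and \ref{cl:card}. Non-triviality gives $c'\in C$ with $cc'\in E(G)\setm M$, and then $c'\notin M$ by closure under pairing. If $d_G(c)\le|M|$, the set $\{v:cv\in E(G)\}$ is an element of $M$ (Claim \ref{cl:def}) of cardinality $\le|M|$, hence a subset of $M$ (Claim \ref{cl:card}), contradicting $c'\notin M$; so $d_G(c)>|M|$. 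But every $G$-neighbour of $c$ outside $M$ is joined to $c$ by an edge of $G\ssetm M$ and therefore lies in $C$, so $|C|>|M|\ge{\omega}$, contradicting finiteness of $C$. Exactly this dichotomy ($d_G(v)\le|M|$ forces all neighbours of $v$ into $M$ and $d_{G\ssetm M}(v)=0$, while $d_G(v)>|M|$ forces $d_{G\ssetm M}(v)=d_G(v)$) is also what is missing in your property (1) deletion case: for $v\in M$ of infinite $G$-valency the ``infinite-valency case'' is not trivial, since without elementarity all but an odd finite number of the edges at $v$ could lie in $M$. (A milder instance of the same oversight occurs in your restriction case: to rule out odd valency in $G\rrestriction M$ when $N_G(v)$ is infinite you need $N_G(v)\cap M$ to be infinite, which follows from an injection $f:{\omega}\to N_G(v)$ lying in $M$, ${\omega}\subs M$, and Claim \ref{cl:omega}.)
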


\begin{proof}[Proof of Lemma \ref{lm:nw12}]
(1)
Assume that in $G$ there is no vertex of odd valency.
Let  $G\in M \prec_{\Sigma}V$
with $|M|\subs M$ for some large enough finite set $\Sigma$ of
formulas.

\noindent
{\bf Claim} {\em There is no vertex of odd valency in $G\rrestriction M$.}

Indeed, let $x\in V(G\rrestriction M)=V\cap M$ be arbitrary , and assume that
the set $A=\{v\in V(G \rrestriction M): vx\in E(G\rrestriction M)\}$ is finite.
Since $A\subs M$, we have $A\in M$ by Claim \ref{cl:omega}, and 
for each $v\in V(G \rrestriction M)$ we have $v\in A$ iff $vx\in E(G)\cap M$. 
Thus
\begin{equation}
M\models A=\{v\in V(G):vx\in E(G)\},  
\end{equation}
so, by $M\prec_\Sigma V$, we have 
\begin{equation}
V\models A=\{v\in V(G):vx\in E(G)\},  
\end{equation}
i.e. $A=\{v\in V(G):vx\in E(G)\}$.
Thus  $d_G(x)=d_{G\rrestriction M}(x)$, which proves the claim.

\noindent
{\bf Claim}  {\em There is no vertex of odd valency in $G\ssetm M$.}

Let  $x\in V$ be arbitrary.
If $x\notin M$, then $G(x)=(G\ssetm M) (x)$ because 
$E(G)\setm E(G\setm M)\subs \br M;2;\subs M$, so 
$d_{G\ssetm   M}(x)=d_G(x)$ can not be odd.

Assume  $x\in M $. If $d_G(x)\le |M|$ then 
$\{v\in V(G):vx\in E(G)\}\in M$ implies 
$\{v\in V(G):vx\in E(G)\}\subs M$ by Claim \ref{cl:card} because
$|M|\subs M$, and so 
$d_{G\ssetm M}(x)=0$.
If $d_G(x)> |M|$ then $d_G(x)=d_{G\ssetm M}(x)$.
So $d_{G\ssetm M}(x)$ can not be an odd natural number.

\medskip

\noindent (2)
Assume that in $G$ there is no finite component.
Let  $G\in M \prec_{\Sigma}V$
with $|M|\subs M$ for some large enough finite set $\Sigma$ of
formulas.

\noindent
{\bf Claim} {\em There is no finite non-trivial component in $G\rrestriction M$.}

Let $x\in V(G)\cap M$ and assume that 
$x$ has a finite component $C$ in $G\rrestriction M$.
Then $C\in M$ and 
\begin{equation}
M\models \text{$C$ is the  component of $x$},  
\end{equation}
so 
\begin{equation}
V\models \text{$C$ is the  component of $x$},  
\end{equation}
i.e. $G$ has finite component.

\noindent
{\bf Claim} {\em There is no finite non-empty component in $G\ssetm M$.}

Assume that there is a finite non-trivial component  $C$ in $G\ssetm M$.
Since $C$ is not a component in $M$  there is an edge
$cd\in E(G)\cap M$ with $c\in C$.
Since $C$ is non-trivial there is $c'\in C$ such that  
$cc'$ is an edge in $G\ssetm M$.
Then $c\in M$ and $c'\notin M$.

Since $d_G(c)\le |M|$ would imply 
$c'\in \{c^*:cc^*\in E(G)\}\subs M$ we have 
$d_G(x)>|M|$. However $\{c^*:cc^*\in E(G)\}\setm M\subs C $, 
and so $|C|>|M|$. Contradiction.  
\end{proof}

We want to  apply Theorem \ref{tm:rmain}.
Let $\Phi_i$ be the property NW${}_i$ for $i=1,2$, and 
$\Psi_1$ be  ``{\em decomposable into cycles and
endless chains }'', and
$\Psi_2$ be ``{\em decomposable into 
endless chains }''.
 
Then  condition \ref{tm:rmain}.(1)
holds by Lemma \ref{lm:nw12}, \ref{tm:rmain}.(2) is true by 
Lemma \ref{lm:nw12c}. \ref{tm:rmain}.(3) is trivial from the definition.
Putting these things together   we obtain the theorem.
\end{proof}

\section{Bond faithful decompositions}\label{sc:bond}

In this section we prove a decomposition theorem 
in which we can not apply  Theorem \ref{tm:rmain}.

\begin{definition}\label{df:fa} Let ${\kappa}$ be an infinite cardinal.
A decomposition $\mc H$ of a graph $G$ is {\em ${\kappa}$-bond faithful}
iff  $|E(H)|\le {\kappa}$ for each $H\in \mc H$,
\begin{enumerate}[(i)]
\item any bond of $G$ of cardinality $\le {\kappa}$ is contained in
  some member of the decomposition,
\item any bond of cardinality $<{\kappa}$ of a member of the
  decomposition is a bond of $G$.
\end{enumerate}
\end{definition}

\begin{theorem}[Laviolette, {\cite[Theorem 3]{La}}]
Every graph has a bond-faithful $\omega$-decomposition, and with the assumption
of GCH, every graph has a bond-faithful $\kappa$-decomposition 
for any infinite cardinal 
$\kappa$.
\end{theorem}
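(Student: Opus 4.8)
The plan is to prove this theorem by transfinite induction on $|E(G)|$, following the same chain-of-elementary-submodels strategy used for the Nash-Williams theorem, but now with a more delicate inductive hypothesis because the ${\kappa}$-bond-faithful property is \emph{not} simply well-reflecting: condition (i) forces every bond of size $\le{\kappa}$ to sit inside a single piece, so we cannot freely chop such bonds across the decomposition. First I would dispose of the easy case. If $|E(G)|\le{\kappa}$, then the trivial decomposition $\mc H=\{G\}$ works: condition (i) is immediate since $G$ itself is a member, and condition (ii) holds because any bond of $G$ is a bond of $G$. So the real content is the inductive step where $|E(G)|>{\kappa}$.

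For the inductive step I would fix a large finite $\Sigma$ and, via the Reflection Principle (Theorem \ref{pr:refl}), a cardinal ${\lambda}$ with $V_{\lambda}\prec_\Sigma V$ and $\br V_{\lambda};\mu;\subs V_{\lambda}$ for a sufficiently large $\mu$. The two regimes split according to the hypothesis. For the $\omega$-case (no GCH needed), I would build a $\mu^+$-chain $\<M_{\alpha}:{\alpha}<\mu^+\>$ of elementary submodels with $G\in M_0$, $|M_{\alpha}|\le\mu$, ${\alpha}\subs M_{\alpha}$, $M_{\alpha}\in M_{{\alpha}+1}$, and crucially $\br M_{\alpha};{\omega};\subs M_{\alpha}$, using Corollary \ref{cor:elem}(3)-style closure so that each $M_{\alpha}$ is closed under countable subsets. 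The pieces would again be $G_{\alpha}=(G\ssetm M_{\alpha})\rrestriction M_{{\alpha}+1}$, giving a decomposition of $G$ into graphs of size $\le\mu$. The point of the $\omega$-closure is that every bond of $G$ of size $\le{\omega}$ (i.e.\ finite or countable) is a countable set of edges, hence if it meets some $G_{\alpha}$ it is already captured inside a single $M_{{\alpha}+1}$ and thus inside one piece, securing condition (i). Under GCH the analogous construction uses a ${\kappa}^+$-chain with $\br M_{\alpha};{<}{\kappa};\subs M_{\alpha}$; GCH is exactly what makes $|M_{\alpha}|^{<{\kappa}}=|M_{\alpha}|$ hold so that such $<{\kappa}$-closed models of the right cardinality exist at every stage of the chain.

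The heart of the argument is verifying that each piece $G_{\alpha}$ really is ${\kappa}$-bond-faithful relative to $G$, and that bonds of $G$ are correctly localized. For condition (ii), I would argue as in Lemmas \ref{lm:nw1}, \ref{lm:nw2} and \ref{lm:GminuszM}: a bond $F$ of size $<{\kappa}$ in $G_{\alpha}=(G\ssetm M_{\alpha})\rrestriction M_{{\alpha}+1}$ first lifts to a bond of $G\ssetm M_{\alpha}$ using Lemma \ref{lm:GminuszM} (the edge-connectivity/Erd\H os--Menger localization that converts separation in a restricted model into separation in $G$), and then to a bond of $G$ via Proposition \ref{pr:bond} applied inside $M_{{\alpha}+1}$, exactly the two-step lifting that established the NW-property. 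For condition (i), a bond $F$ of $G$ with $|F|\le{\kappa}$ is, by the closure of the models under ${\le}{\kappa}$-sized subsets, an element of some $M_{\alpha}$ once all its finitely/${\le}{\kappa}$-many endpoints and edges have appeared; then $F$ lives entirely between two stages and, because $F$ is a minimal cut, it cannot be split by the removal operation $G\ssetm M_{\alpha}$—it falls wholly into a single $G_{\alpha}$.

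I expect the main obstacle to be precisely condition (i): showing that each small bond of $G$ is \emph{not} fragmented by the chain decomposition. Unlike the bridgeless or odd-cut properties, here we must positively guarantee containment in one piece, so the argument cannot be phrased as a black-box application of Theorem \ref{tm:rmain}; that is why the excerpt explicitly says this is a case ``in which we cannot apply Theorem \ref{tm:rmain}.'' The delicate point is matching the closure degree of the models ($\br M_{\alpha};{<}{\kappa};\subs M_{\alpha}$, or ${\omega}$-closure in the unrestricted case) to the threshold ${\kappa}$ in the bond-faithfulness conditions, and checking that when a bond $F$ of $G$ with $|F|\le{\kappa}$ appears across consecutive models, minimality of $F$ prevents any of its edges from being stripped away into $M_{\alpha}$ while others survive. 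Controlling the cardinal arithmetic so that suitably closed submodels of the correct size exist at every stage is where GCH enters for general ${\kappa}$, and isolating exactly what is needed (rather than full GCH) would be the technical crux of writing the proof cleanly.
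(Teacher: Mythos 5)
Your treatment of condition (ii) of Definition \ref{df:fa} does match the paper: the two-step lifting (from a piece $G_{\alpha}=(G\ssetm M_{\alpha})\rrestriction M_{{\alpha}+1}$ up to $G\ssetm M_{\alpha}$, and from there up to $G$, via Proposition \ref{pr:bond}, Lemma \ref{lm:GminuszM} and the Menger-type reflection argument) is exactly the content of the paper's key Lemma \ref{lm:bf}. The genuine gap is in your treatment of condition (i). You claim that, after closing each $M_{\alpha}$ under subsets of size $<\kappa$ (which is where you re-import GCH), a bond $F$ of $G$ with $|F|\le{\kappa}$ ``lives entirely between two stages and, because $F$ is a minimal cut, it cannot be split by the removal operation $G\ssetm M_{\alpha}$ --- it falls wholly into a single $G_{\alpha}$.'' Closure does not give this. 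Closure under small subsets yields $F\in M_{\alpha}$ \emph{once} $F\subs M_{\alpha}$, but the piece that an edge $e$ lands in is determined by the least ${\beta}$ with $e\in M_{{\beta}+1}$, and nothing prevents the edges of $F$ from making their first appearance at different stages ${\beta}$ of the chain; in that case $F$ is fragmented among several pieces $G_{\beta}$. Minimality of the cut is never actually used in your sketch, and the assertion ``one edge of $F$ in $M_{{\beta}+1}$ implies all of $F$ in $M_{{\beta}+1}$'' is precisely what would have to be proved. It cannot be extracted from elementarity either, because a bond is not determined by one of its edges: a single edge can lie in continuum many distinct countably infinite bonds (join two vertices $u,v$ by an edge and by a ``ladder'' over the binary tree $2^{<{\omega}}$; every branch yields a countable bond through $uv$), so no definability argument pins $F$ down from an edge of it. This hole is already present in your $\kappa={\omega}$ case, since condition (i) there concerns countably infinite bonds as well as finite ones.

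What the paper does instead --- and this is the idea your proposal is missing --- is to get condition (i) not from closure of the models but from an auxiliary partition together with a definability trick, which is also why the paper needs no GCH at all and proves the stronger Theorem \ref{tm:fa}. One first quotes Laviolette's ZFC result ({\cite[Proposition 3]{La}}): $G$ has a decomposition $\mc K$ into subgraphs with at most ${\kappa}$ edges such that every bond of $G$ of size $\le{\kappa}$ lies inside one member. Taking such a $\mc K\in M_0$, a bond $A$ of $G$ with $|A|\le{\kappa}$ satisfies $A\subs E(K)$ for some $K\in\mc K$; and although $A$ is not definable from one of its edges, $K$ \emph{is}: since $\mc K$ is a partition of the edges, $K$ is the unique member of $\mc K$ containing $e$, for any single $e\in E(K)$. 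Hence, taking ${\alpha}$ least with $E(K)\cap M_{{\alpha}+1}\ne\empt$, Claim \ref{cl:def} gives $K\in M_{{\alpha}+1}$, Claim \ref{cl:card} gives $E(K)\subs M_{{\alpha}+1}$ (as $|E(K)|\le{\kappa}\subs M_0$), and minimality of ${\alpha}$ gives $E(K)\cap M_{\alpha}=\empt$; so $A\subs E(K)\subs E(G_{\alpha})$ lands in a single piece, and the induction hypothesis applied to the decomposition $\mc H_{\alpha}$ of $G_{\alpha}$ finishes the argument. In short: it is the member of the auxiliary partition, not the bond itself, that elementarity localizes into one piece of the chain decomposition, and replacing your closure-plus-minimality claim by this mechanism is what makes the proof work (and removes GCH as a by-product).
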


Applying methods of elementary submodels leads more naturally 
to a simpler proof of the theorem above that does not rely on GCH. 

\begin{theorem}\label{tm:fa}
For any cardinal ${\kappa}$
every  graph has a ${\kappa}$-bond faithful decomposition.   
\end{theorem}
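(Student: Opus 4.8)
The plan is to mimic the structure of the Nash-Williams proof (Theorem \ref{tm:nw}) and its general framework (Theorem \ref{tm:r1}), proceeding by induction on $|V(G)|$ using a $\kappa$-chain of elementary submodels. Since Theorem \ref{tm:rmain} cannot be applied here — the property ``has a ${\kappa}$-bond faithful decomposition'' is presumably not well-reflecting in the required sense because the two conditions (i) and (ii) in Definition \ref{df:fa} interact with the threshold ${\kappa}$ in a way that is not stable under taking $G\rrestriction M$ and $G\ssetm M$ — I would build the decomposition directly. For a graph $G=\<\mu,E\>$ with $\mu>{\kappa}$, I would apply the Reflection Principle to obtain ${\lambda}$ with $V_{\lambda}\prec_\Sigma V$ and $\br V_{\lambda};\mu;\subs V_{\lambda}$, and then use Fact \ref{fa:chain} to fix a $\mu$-chain $\<M_{\alpha}:{\alpha}<\mu\>$ of elementary submodels of $V_{\lambda}$ with $G\in M_0$ and ${\alpha}\subs M_{\alpha}$. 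Crucially, unlike in the cycle case, I would want each $M_{\alpha}$ to satisfy $\br M_{\alpha};{\kappa};\subs M_{\alpha}$ (i.e. be closed under ${\kappa}$-sized subsets), so that the chain ``sees'' all small bonds; this requires strengthening the chain construction, closing each $M_{\alpha}$ under ${\kappa}$-sequences, which is possible when $|M_{\alpha}|^{\kappa}=|M_{\alpha}|$ but in general forces me to take $|M_{\alpha}|\ge 2^{\kappa}$ and handle the cardinal arithmetic carefully.

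Given the chain, I would decompose $E(G)$ into the ``slices'' $E_{\alpha}=E(G\ssetm M_{\alpha})\cap \br M_{{\alpha}+1};2;$, exactly as $G_{\alpha}=(G\ssetm M_{\alpha})\rrestriction M_{{\alpha}+1}$ in the earlier proofs, so that $\{E(G_{\alpha}):{\alpha}<\mu\}$ partitions $E(G)$. The key estimate $|E(G_{\alpha})|\le |M_{{\alpha}+1}|$ controls the size of each piece, and one recurses on each $G_{\alpha}$, whose vertex set has size $<\mu$, to get a ${\kappa}$-bond faithful decomposition by the inductive hypothesis. The output decomposition $\mc H$ is the union of the decompositions of the pieces $G_{\alpha}$, together possibly with a refinement step to enforce the $|E(H)|\le {\kappa}$ cardinality bound on each member. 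I would need Lemma \ref{lm:GminuszM} (and the weak Erd\H os--Menger theorem) repeatedly to transfer separation facts between $G$, $G\ssetm M_{\alpha}$, and $G\rrestriction M_{\alpha}$.

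The main obstacle, and the heart of the proof, is verifying the two bond-faithfulness conditions for the resulting $\mc H$. For condition (i), given a bond $F$ of $G$ with $|F|\le {\kappa}$, I must show $F$ lands inside one member of the decomposition: the idea is that such a small $F$ is ``captured'' at some stage of the chain, i.e. there is a least ${\alpha}$ with $F\subs M_{\alpha}$ (using $|F|\le{\kappa}$ and the ${\kappa}$-closure of the models so that $F\in M_{\alpha}$ once its vertices are in $M_{\alpha}$), and then $F$ lies entirely in the relevant slice and inside the recursively-built member containing it — but this needs the induction hypothesis to be applied to a piece that still recognizes $F$ as a bond, which is delicate. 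For condition (ii), given a bond $B$ of cardinality $<{\kappa}$ of some member $H\in \mc H$, I must show $B$ is a bond of $G$; this is where Proposition \ref{pr:bond} and Lemma \ref{lm:GminuszM} do the work, showing that a small bond of a piece cannot be ``accidentally'' created by the cutting, by lifting separations from $G\ssetm M_{\alpha}$ back up to $G$. Reconciling these two requirements simultaneously — capturing all small bonds of $G$ while never manufacturing spurious small bonds in the pieces — is the real difficulty, and it is precisely here that the GCH assumption in the original Laviolette argument is circumvented: the closure conditions $\br M_{\alpha};{\kappa};\subs M_{\alpha}$ substitute for the cardinal arithmetic, and keeping these closure properties along a chain of the right length is the technical crux of the whole argument.
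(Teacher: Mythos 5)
Your skeleton --- induction on $|V(G)|$, the Reflection Principle, a continuous $\mu$-chain $\<M_{\alpha}:{\alpha}<\mu\>$ with $G\in M_0$, the slices $G_{\alpha}=(G\ssetm M_{\alpha})\rrestriction {M_{{\alpha}+1}}$, recursion on the pieces, and Lemma \ref{lm:GminuszM}/Proposition \ref{pr:bond} for condition \ref{df:fa}(ii) --- is exactly the paper's (the paper packages the two transfer facts as Lemma \ref{lm:bf}). The genuine gap is in your treatment of condition \ref{df:fa}(i), and it is twofold. First, the closure requirement $\br M_{\alpha};{\kappa};\subs M_{\alpha}$ is unattainable in this construction: the chain must be continuous, since otherwise an edge first appearing at a limit stage lies in no slice and the $G_{\alpha}$ do not decompose $G$; but at a limit ${\delta}$ with $\cf({\delta})\le {\kappa}$ the model $M_{\delta}=\cup\{M_{\beta}:{\beta}<{\delta}\}$ is never ${\kappa}$-closed (pick one new point from cofinally many levels; by Claim \ref{cl:card} the resulting set of size $\cf({\delta})$ cannot belong to any $M_{\beta}$, hence not to $M_{\delta}$). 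Worse, if ${\kappa}<\mu\le 2^{\kappa}$ (say ${\kappa}={\omega}$, $\mu={\omega}_1$, and CH fails), there is \emph{no} ${\kappa}$-closed set of size $<\mu$ at all, since $|M|^{\kappa}\ge 2^{\kappa}\ge \mu>|M|$, while the induction needs pieces of size $<\mu$. So ${\kappa}$-closure does not ``substitute for the cardinal arithmetic'' --- it \emph{is} the cardinal arithmetic, and your argument only works in the cases Laviolette's GCH argument already covered. The paper deliberately uses only $|M_{\alpha}|\subs M_{\alpha}$ and ${\kappa}\subs M_0$, which are free since ${\alpha}\subs M_{\alpha}$ and ${\kappa}<\mu$.

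Second, even granting closure, your capturing step fails: taking the least ${\alpha}$ with $F\subs M_{\alpha}$ does not place the bond $F$ inside one slice. Each edge of $F$ lies in the slice $G_{\beta}$ determined by the stage ${\beta}+1$ at which that particular edge first appears, and these stages can be scattered below ${\alpha}$; learning afterwards that $F\in M_{\alpha}$ does not undo the splitting. What is needed is that \emph{all} edges of $F$ first appear at the same stage, and nothing forces this for an arbitrary small bond of $G$. The paper's key ingredient, which your proposal is missing, is \cite[Proposition 3]{La} (whose proof needs no GCH): every graph has a decomposition $\mc K$ into subgraphs with at most ${\kappa}$ edges satisfying \ref{df:fa}(i) alone. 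By elementarity one finds such a $\mc K\in M_0$; then each $K\in \mc K$ is definable from the parameters $\mc K$ and any single edge $e\in E(K)$ (it is the unique member of the decomposition containing $e$), so Claim \ref{cl:def} gives $K\in M_{{\alpha}+1}$ for the least ${\alpha}$ with $E(K)\cap M_{{\alpha}+1}\ne \empt$; Claim \ref{cl:card} (using only ${\kappa}\subs M_0$) gives $E(K)\subs M_{{\alpha}+1}$, and minimality plus continuity give $E(K)\cap M_{\alpha}=\empt$, hence $E(K)\subs E(G_{\alpha})$. Thus every bond of $G$ of size $\le{\kappa}$, being contained in some $K$, lies in a single slice, and the inductive hypothesis for $\mc H_{\alpha}$ finishes (i). This definability trick is precisely what replaces both your closure assumption and GCH; without it (or an equivalent device) your proof does not go through. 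A side remark: your proposed ``refinement step'' to enforce $|E(H)|\le {\kappa}$ is also dangerous, since refining a decomposition can split bonds; in the paper's argument no such step is needed because the bound comes from the inductive hypothesis.
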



The following lemma is the key to the proof.
\begin{lemma}\label{lm:bf}
 Let $G$ be a graph, $G\in M \prec_{\Sigma}V$
with $\mu=|M|\subs M$ for some large enough finite set $\Sigma$ of
formulas.
\begin{enumerate}[(I)]
\item If $F\subs E(G\rrestriction M)$ is a bond of $G\rrestriction M$ with
$|F|<|M|$ then $F$ is a bond in $G$.
\item If $F\subs E(G)$ is a bond of $G\ssetm M$ with
$|F|<|M|$ then $F$ is a bond in $G$.
\end{enumerate}
\end{lemma}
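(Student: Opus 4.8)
The plan is to prove both parts by using the elementary submodel machinery to show that a ``small'' cut cannot be destroyed by the restriction/deletion operations. Let me sketch the key tools first. By Fact~\ref{f:bond}, $F$ being a bond means there are two distinct connected components $C_1$ and $C_2$ of the relevant graph with $F=E\cap[C_1,C_2]$. The central difficulty is the same in both parts: a bond $F$ with $|F|<|M|$ could fail to be a bond in $G$ either by ceasing to be a cut at all, or by ceasing to be \emph{minimal} among cuts. By Proposition~\ref{pr:bond}, whenever a bond $F$ of a subgraph $H$ fails to be a bond of $G$, it must be that $F\subseteq\br D;2;$ for a single connected component $D$ of $G\ssetm F$; so the whole problem reduces to ruling out this ``collapse into one component'' scenario.

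For part (II), where $F$ is a bond of $G\ssetm M$ with $|F|<|M|$, I would argue as in the proof of Lemma~\ref{lm:nw2}. Pick an edge $c_1c_2\in F$. Then $\gamma_{G\ssetm M}(c_1,c_2)>0$ and, since $F$ is a bond of $G\ssetm M$, $F$ separates $c_1$ and $c_2$ in $G\ssetm M$; because $|F|\le|M|$, Lemma~\ref{lm:GminuszM} applies directly to give that $F$ separates $c_1$ and $c_2$ in $G$ as well. Thus $c_1$ and $c_2$ lie in different connected components of $G\ssetm F$. If $F$ were not a bond of $G$, Proposition~\ref{pr:bond} would force $F\subseteq\br D;2;$ for a single component $D$ of $G\ssetm F$, putting $c_1$ and $c_2$ in the \emph{same} component --- a contradiction. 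Hence $F$ is a bond of $G$.

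For part (I), where $F$ is a bond of $G\rrestriction M=G[M\cap V(G)]$ with $|F|<|M|$, the situation is easier because $F$ and its endpoints already live inside $M$. By Fact~\ref{f:bond} fix components $C_1,C_2$ of $(G\rrestriction M)\ssetm F$ with $F=E(G\rrestriction M)\cap[C_1,C_2]$, and pick $c_1\in C_1$, $c_2\in C_2$ with $c_1c_2\in F$. Since $F\subseteq M$ (as $|F|<|M|\subseteq M$ forces $F\subseteq M$ by Claim~\ref{cl:card}) and $c_1,c_2\in M$, the statement ``$F$ is a bond of $G$ separating $c_1$ and $c_2$'' has all its parameters in $M$. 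The key step is to observe that $\gamma_{G\rrestriction M}(c_1,c_2)>0$, so the path-connectivity of $c_1,c_2$ survives; then I would either invoke Lemma~\ref{lm:GminuszM}-style reasoning or argue directly via absoluteness. Concretely, suppose $F$ is not a bond of $G$; by Proposition~\ref{pr:bond} there is a component $D$ of $G\ssetm F$ with $F\subseteq\br D;2;$, so there is a path in $G$ from $c_1$ to $c_2$ avoiding $F$. The existence of such a path (of some finite length, with a witnessing function, all avoiding $F$) is a first-order statement with parameters $G,F,c_1,c_2\in M$, so by $M\prec_\Sigma V$ and Claim~\ref{cl:eval} there is such a path inside $M$, i.e.\ inside $G\rrestriction M$ avoiding $F$. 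But that contradicts $F$ being a cut separating $C_1$ from $C_2$ in $G\rrestriction M$.

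The main obstacle is part (I): unlike part (II), I cannot simply quote Lemma~\ref{lm:GminuszM}, because that lemma is stated for $G\ssetm M$, not for the restriction $G\rrestriction M$. The delicate point is verifying that $F\subseteq M$ (using $|F|<|M|\subseteq M$ together with Claim~\ref{cl:card}) so that the ``$F$-avoiding path'' reflects correctly into $M$, and confirming that a path witnessing connectivity in $G$ and avoiding $F\subseteq M$ automatically has all its vertices in $M$ and hence lies in $G\rrestriction M$. I expect this reflection-of-a-path argument --- essentially the same absoluteness argument used in the proof of Lemma~\ref{lm:weak}/\ref{lm:nw1} --- to be where the finite set $\Sigma$ must be chosen ``large enough,'' and I would collect the relevant formulas (path existence, edge membership, function evaluation) into $\Sigma$ at the end as the paper does elsewhere.
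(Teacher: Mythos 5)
Your part (II) is correct and is essentially verbatim the paper's proof: pick $c_1c_2\in F$, apply Lemma \ref{lm:GminuszM}, and contradict Proposition \ref{pr:bond}. The genuine gap is in part (I). Your key step reflects the statement ``there is a path from $c_1$ to $c_2$ in $G$ avoiding $F$'' into $M$, and this requires $F$ to be an \emph{element} of $M$, not merely a subset of $M$. Nothing in the hypotheses gives $F\in M$: the lemma allows any bond with $|F|<|M|$, so $F$ may be infinite (and this case is genuinely needed --- in the proof of Theorem \ref{tm:fa} the lemma is applied to bonds of size $<\kappa$ for an arbitrary infinite cardinal $\kappa$), and the only closure assumed is $|M|\subs M$, not $\br M;{\omega};\subs M$. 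Your attempted justification via Claim \ref{cl:card} is circular: that claim needs $F\in M$ as a hypothesis in order to conclude $F\subs M$. (In fact $F\subs M$ is automatic, since every edge of $G\rrestriction M$ is a pair of elements of $M$ and hence an element of $M$ by Claim \ref{cl:omega}; but a subset of $M$ cannot serve as a parameter in an absoluteness argument.) Your argument is sound when $F$ is finite, since then $\br M;<{\omega};\subs M$ gives $F\in M$, but that covers only a fragment of the statement.

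The paper's proof of (I) is built precisely to avoid ever using $F$ as a parameter. Assuming $F$ is not a bond of $G$, it takes $xx'\in F$ and a path $P$ from $x$ to $x'$ in $G\ssetm F$ (via Proposition \ref{pr:bond}), chosen so that the number of its edges lying outside $M$, $|I_P|$ where $I_P=\{i:x_ix_{i+1}\notin M\}$, is minimal; here $I_P\ne\empt$ because $F$ is a cut of $G\rrestriction M$. It then locates two vertices $x_i,x_j\in M$ of $P$ joined by a segment of $P$ lying entirely in $(G\ssetm M)\ssetm F$, and proves Claim \ref{cl:gmkappa}: if $x,y\in M$ and ${\gamma}_{G\ssetm M}(x,y)>0$, then ${\gamma}_{G\rrestriction M}(x,y)=|M|$. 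All uses of $M\prec_\Sigma V$ occur inside this claim, with parameters $G,x,y,\mu\in M$ only. Then, since ${\gamma}_{G\rrestriction M}(x_i,x_j)=|M|>|F|$, among $|M|$ edge-disjoint paths between $x_i$ and $x_j$ in $G\rrestriction M$ some path avoids $F$ \emph{by counting}, not by reflection; splicing it into $P$ decreases $|I_P|$, contradicting minimality. This replacement of ``reflect a formula mentioning $F$'' by ``count edge-disjoint paths against $|F|$'' is the key idea missing from your proposal.
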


\begin{proof}[Proof of \ref{lm:bf}]
(I) 
Assume on the contrary that $F$ is not a bond in $G$.
Pick $xx'\in F$.
Then by Proposition \ref{pr:bond}
$x$ and $x'$ are in the same connected component $D$ of $G\ssetm F$,
and so 
 there is a path 
$P=x_1x_2\dots x_n$, in $G\ssetm F$, $x_1=x$, $x_n=x'$.  
Choose the path  in such a way that the cardinality of the
finite set 
\begin{equation}
I_P=\{i: x_ix_{i+1}\notin M\}   
\end{equation}
is minimal. Since $F$ is a cut in $G\rrestriction M$ we have $I_P\ne
\empt$.
Let $i=\min I_p$. Then  $x_i\in M$.
Let $j=\min\{j>i:x_j\in M\}$.
Then $j>i+1$, $x_i,x_j\in M$, and moreover  
${\gamma}_{(G\ssetm M)\ssetm F}(x_i,x_j)>0$.

\begin{claim}\label{cl:gmkappa}
If $x,y\in M$, ${\gamma}_{G\ssetm M}(x,y)>0$ then 
${\gamma}_{G\rrestriction M}(x,y)=|M|$.  
\end{claim}

\begin{proof}[Proof of the Claim]
There is a vertex set $A\in \br V(G);{{\gamma}_G(x,y)};$
such that $A$ separates $x$ and $y$ in $G$.
We assumed that 
$\Sigma$ is large enough, especially it contains the formula
$\exists A\ \varphi(A,x,y,G)$, where
$\varphi(A,x,y,G)$ is the following formula:
\begin{multline}\notag
\text{\em $A\in \br V(G);{{\gamma}_G(x,y)};$ is a vertex set
which  separates $x$ and $y$ in $G$}. 
\end{multline}
Since $M\prec_\Sigma V$, and the parameters $G,x,y$ are in $M$, 
there is   an $A$ in $M$ such that 
$M\models \varphi(A,x,y,G)$.
Since we assumed that 
$\Sigma$ is large enough,  it contains the formula
$\varphi(A,x,y,G)$.
So $V\models \varphi(A,x,y,G)$, i.e. 
$A\in \br V(G);{{\gamma}_G(x,y)};\cap M$ is a vertex set
which  separates $x$ and $y$ in $G$.

If ${\gamma}_G(x,y)\le {\mu}\subs M$ then  
$A\in M$ implies $A\subs M$   by Claim \ref{cl:card}, and so $M$ separates $x$ and
$y$ in $G$. Thus ${\gamma}_{G\ssetm M}(x,y)=0$.

But ${\gamma}_{G\ssetm M}(x,y)>0$, so we have 
${\gamma}_G(x,y)>|M|$.
So, by the weak Erd\H os-Menger Theorem  there is a family
$\mc P$ of  
${\mu}$ many edge disjoint paths between 
$x$ and $y$ in $G$.
Since $G,x,y,{\mu}\in M$ we can find such a $\mc P$ in $M$.
But $|\mc P|={\mu}\subs M$, and so $\mc P\subs M$.
Thus there are ${\mu}$-many 
edge disjoint paths between 
$x$ and $y$ in $M$, i.e. 
${\gamma}_{G\rrestriction M}(x,y)={\mu}$. 
\end{proof}

By the Claim  ${\gamma}_{G\rrestriction M}(x_i,x_j)={\mu}$.
So,   by the weak infinite Menger Theorem, 
there are ${\mu}$ many edge disjoint path in 
$G\rrestriction M$ between $x_i$ and $x_j$.
Since $|F|<{\mu}$, there is a path 
$Q=x_iy_1\dots y_kx_j$ in $G\rrestriction M$ which  avoid
$F$. Then 
$P'=x_1\dots x_jy_1\dots y_kx_j\dots x_n$
is a path between $x_1$ and $x_n$ in $G\ssetm F$ with $|I_{P'}|<|I_P|$. Contradiction.

\noindent (II)
Let $c_1c_2\in F$.
Then ${\gamma}_{G\ssetm M}(c_1,c_2)>0$, $F$ separates
  $c_1$ and $c_2$ in $G\ssetm M$, so $F$ also separates  
$c_1$ and $c_2$ in $G$ by Lemma \ref{lm:GminuszM}.
In other words,  $c_1$ and $c_2$ are in different connected component of
$G\ssetm F$, and so 
$F$ should be a bond in $G$ by Proposition \ref{pr:bond}. 
\end{proof}

\begin{proof}[Proof of Theorem \ref{tm:fa}]
By induction on $|V(G)|$.
If $|V(G)|\le {\kappa}$ then the  one element decomposition
$\{G\}$
 works.

Assume that 
$G=\<{\mu},E\>$, and 
${\mu}>{\kappa}$.
Let $\Sigma$ be a large enough finite set of formulas.
By the Reflection Principle \ref{pr:refl}
there is a  cardinal ${\lambda}$ such that
  $V_{\lambda}\prec_\Sigma V$
and $\br V_{\lambda};{\mu};\subs V_{\lambda}$.

By Fact \ref{fa:chain} there is a 
${\mu}$-chain of elementary submodels
$\<M_{\alpha}:{\alpha}<{\mu}\>$ of
$V_{\lambda}$ with $G,{\kappa}\in M_0$.
Since and ${\kappa}<{\mu}$ and ${\alpha}\subs M_{\alpha}$ for ${\alpha}<{\mu}$, 
we can assume that ${\kappa}\subs M_0$.

Using the chain $\<M_{\alpha}:{\alpha}<{\mu}\>$
partition $G$ as follows:
\begin{itemize}
\item for ${\alpha}<{\mu}$ let $G_{\alpha}=(G\ssetm
  M_{\alpha})\rrestriction {M_{{\alpha}+1}}$.
\end{itemize}
Let $G_{\alpha}'=G\ssetm M_{\alpha}$.
By Lemma \ref{lm:bf}(II) 
\begin{itemize}
\item any bond of cardinality $<{\kappa}$ of $G'_{\alpha}$ is a bond of $G$.
\end{itemize}
Moreover, since $M_{\alpha}\in M_{{\alpha}+1}$ we have
$G\ssetm M_{\alpha}\in M_{{\alpha}+1}$. So we can apply
Lemma \ref{lm:bf}(I) for $M_{{\alpha}+1}$ and $G_{\alpha}'$ to derive
that 
\begin{itemize}
\item any bond of cardinality $<{\kappa}$ of $G_{\alpha}$ is a bond of $G'_{\alpha}$.
\end{itemize}
Putting these together
\begin{itemize}
\item[$\circ$] any bond of cardinality $<{\kappa}$ of $G_{\alpha}$ is a bond of $G$.
\end{itemize}
Moreover
$|V(G_{\alpha})|\le |M_{{\alpha}+1}|\le {\omega}+|{\alpha}|<{\mu}$,
so by the inductive hypothesis, every $G_{\alpha}$ 
has a ${\kappa}$-bond faithful decomposition $\mc H_{\alpha}$.
Let  $\mc H=\cup\{\mc H_{\alpha}:{\alpha}<{\mu}\}$.
 $\mc H$ clearly satisfies \ref{df:fa}(ii):
if $F$ is a bond of some $H\in \mc H_{\alpha}$ with $|F|<{\kappa}$,
then $F$ is a bond of $G_{\alpha}$, and so $F$ is a bond of $G$
by ($\circ$).

Finally we show that $\mc H$ satisfies  \ref{df:fa}(i) as well.
We recall one more result of Laviolette:
\begin{theorem}[{\cite[Proposition 3]{La}}]
For any cardinal ${\kappa}$
every  graph has a decomposition  $\mc K$ which satisfies 
\ref{df:fa}(i) and $|E(K)|\le {\kappa}$ for each $K\in \mc K$.
  \end{theorem}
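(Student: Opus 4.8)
The plan is to reduce the statement to a single local fact about how small bonds overlap, and to organize the reduction with the submodel machinery already developed. Throughout I assume ${\kappa}$ infinite (the case of Definition \ref{df:fa}). First I would introduce the relation $\sim$ on $E(G)$ generated by declaring $e\sim f$ whenever $e$ and $f$ lie in a common bond $F$ of $G$ with $|F|\le{\kappa}$, and then closing it transitively. Every $\le{\kappa}$-bond is $\sim$-connected, hence contained in a single $\sim$-class; conversely, in \emph{any} decomposition satisfying \ref{df:fa}(i) two edges sharing a small bond must land in the same member, so the $\sim$-classes refine every admissible decomposition. Thus it suffices to prove that each $\sim$-class $[e]$ satisfies $|[e]|\le{\kappa}$: taking the members of $\mc K$ to be the $\sim$-classes (edges lying in no small bond forming singleton classes) then produces a decomposition with $|E(K)|\le{\kappa}$ that satisfies \ref{df:fa}(i).

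Next I would localize the bound to one edge. For an edge $g$ put $N(g)=\bigcup\{F:g\in F,\ F\text{ is a bond of }G,\ |F|\le{\kappa}\}$. Since $[e]=\bigcup_{n<{\omega}}D_n$ with $D_0=\{e\}$ and $D_{n+1}=\bigcup_{g\in D_n}N(g)$, the estimate $|[e]|\le{\kappa}$ follows from $|N(g)|\le{\kappa}$ for every $g$, using ${\kappa}\cdot{\kappa}={\kappa}$ and ${\kappa}\cdot{\omega}={\kappa}$. This is the crux, and it is natural to package it through reflection in the style of the previous sections: choosing $G,{\kappa},g\in M\prec_\Sigma V$ with $|M|={\kappa}$ and ${\kappa}\subs M$, the set $N(g)$ is definable from $G,{\kappa},g$, so $N(g)\in M$; by Claim \ref{cl:card} the inequality $|N(g)|\le{\kappa}$ is then \emph{equivalent} to $N(g)\subs M$. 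In other words, the whole theorem comes down to showing that no $\le{\kappa}$-bond through an edge of $M$ can reach an edge outside $M$.

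Finally, for the local lemma, write $g=\{x,y\}$. Every $\le{\kappa}$-bond containing $g$ separates $x$ from $y$, so if ${\gamma}_G(x,y)>{\kappa}$ then $N(g)$ is trivial; otherwise set ${\gamma}={\gamma}_G(x,y)\le{\kappa}$ and fix, by the weak Erd\H os--Menger Theorem, ${\gamma}$ edge-disjoint $x$--$y$ paths (each necessarily finite, since a simple path between two vertices is finite). Any $\le{\kappa}$-bond through $g$ must meet each of these paths, and I would bound the union of all such bonds by \emph{uncrossing}: for bond-sides $A,A'\ni x$ with $y\notin A,A'$ the submodularity of the cut function $d(S)=|E(G)\cap[S,\bar S]|$ forces the sides $A\cap A'$ and $A\cup A'$ to stay in the $\le{\kappa}$ regime, so the family of sides is laminar up to small corrections; combined with the finiteness of the ${\gamma}$ base paths this should confine every bond-edge to the ${\kappa}$-sized ``flow region'' around $x,y$, yielding $|N(g)|\le{\kappa}$. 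The main obstacle is exactly this last step: a single small bond can cross edges arbitrarily far from $g$, so reflection alone cannot keep $N(g)$ inside $M$, and the bound is a genuinely graph-theoretic fact about minimal edge cuts. The easy case is that the sides of the \emph{minimum} $x$--$y$ cuts form a distributive lattice; the delicate point, which is the real content of Laviolette's argument, is to make the uncrossing work uniformly for \emph{all} $\le{\kappa}$-bonds, not only the minimum ones. The elementary-submodel formalism here serves to structure the reduction rather than to supply the core estimate.
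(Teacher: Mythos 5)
Your reduction is correct as far as it goes, and it is a clean observation: the $\sim$-classes are the finest possible members, since the members of any decomposition are edge-disjoint, so every decomposition witnessing \ref{df:fa}(i) must contain each $\sim$-class inside a single member. But this cuts both ways: it shows that the theorem is \emph{equivalent} to your local statement that $|N(g)|\le{\kappa}$ for every edge $g$, where $N(g)=\bigcup\{F: g\in F,\ F\text{ a bond of }G,\ |F|\le{\kappa}\}$. Since your proposal ends by conceding that this local statement is not proved, what you have is a reformulation of the theorem, not a proof of it; the entire content of the proposition sits in the unproved step. The uncrossing sketch does not close this gap. Submodularity of $d(S)=|E(G)\cap[S,\bar S]|$ does show that uncrossing \emph{two} sides of $\le{\kappa}$-bonds through $g$ produces sides whose boundaries are again of size $\le{\kappa}$, but that is a statement about pairs; it gives no bound on the union of the boundaries over the whole (possibly enormous) family of small bonds through $g$. ``Laminar up to small corrections'' is never established, transfinite iteration of uncrossing has no continuity (the boundary of an increasing union of sides is not controlled by the boundaries of its members), and the fixed family of ${\gamma}$ Menger paths confines nothing: already in a finite cycle through $x$ and $y$ with $g=xy$, the bonds through $g$ contain edges at arbitrarily large distance from $x$ and $y$, and their union is the whole cycle. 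So locality fails exactly where you need it, and the ``genuinely graph-theoretic fact'' you point to is the theorem itself.

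You should also be aware that the paper offers no proof to compare against: this statement is imported as a black box from Laviolette \cite{La} (his Proposition 3), with only the remark that the GCH hypothesis made there is not actually used in his proof. In the paper, the elementary-submodel machinery runs \emph{downstream} of this proposition --- it is used, together with the chain $\<M_{\alpha}:{\alpha}<{\mu}\>$ and Lemma \ref{lm:bf}, to upgrade it to the full ${\kappa}$-bond-faithful decomposition of Theorem \ref{tm:fa} --- and not to prove the proposition. Your own closing remark explains why: with $G,{\kappa},g\in M\prec_\Sigma V$, $|M|={\kappa}\subs M$, reflection and Claim \ref{cl:card} merely translate $|N(g)|\le{\kappa}$ into $N(g)\subs M$, and supply no reason why a small bond through an edge of $M$ could not leave $M$. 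To complete your argument you would have to reproduce Laviolette's combinatorial proof of the bound on $N(g)$ (or find a new one); nothing in the present paper can be cited to fill that step.
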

Let us remark that GCH was assumed in \cite[Proposition 3]{La},
but in the proof it was not used.

Let $\varphi(G',{\kappa}',\mc K')$ be the following formula:
\begin{multline}\notag
\text{\em $\mc K'$ is a decomposition of $G'$ which satisfies 
\ref{df:fa}(i)}\\
\text{\em and $|E(K)|\le {\kappa}$ for each $K'\in \mc K'$.} 
\end{multline}
Since $\Sigma$ was ``large enough'' we can assume that 
it contains the formulas $\varphi(G',{\kappa}',\mc K')$
and $\exists \mc K' \varphi(G',{\kappa}',\mc K')$.
Since $M_0\prec_\Sigma V$, and $G,{\kappa}\in M_0$ we have 
a $\mc K\in M_0$ such that $\varphi(G,\kappa,\mc K)$ holds, i.e.
$\mc K$ is a decomposition of $G$ which witnesses \ref{df:fa}(i) and
$|E(K)|\le {\kappa}$ for each $K\in \mc K$.
Assume that $A$ is a bond of $G$ with $|A|\le {\kappa}$.
Then there is $K\in \mc K$ such that $A\subs E(K)$. 
Let ${\alpha}$ be minimal such that $
E(K)\cap M_{{\alpha}+1}\ne \empt$, and pick $e\in E(K)\cap M_{{\alpha}+1}$.
Then 
$K$ is definable from the parameters $\mc K, e\in M_{{\alpha}+1}$
by the  formula ``$K\in \mc K\land e\in K$''.
So $K\in M_{{\alpha}+1}$ by Claim \ref{cl:def}. Thus $A\subs E(K) \subs E(G_{\alpha})$.
Since, by the inductive assumption, 
the decomposition $\mc H_{\alpha}$ satisfies \ref{df:fa}(i)
there is $H\in \mc H_{\alpha}$
with $A\subs E(H)$. But $H\in \mc H$, so we are done.
\end{proof}

\end{document}